\theoremstyle{plain}
\newtheorem{thm}{\protect\theoremname}
  \theoremstyle{remark}
  \newtheorem{rem}{\protect\remarkname}
  \theoremstyle{plain}
  \newtheorem{cor}{\protect\corollaryname}
  \theoremstyle{plain}
  \newtheorem{fact}{\protect\factname}
  \theoremstyle{plain}
  \newtheorem{lem}{\protect\lemmaname}
  \theoremstyle{plain}
  \newtheorem{prop}{\protect\propositionname}
\theoremstyle{definition}
\newtheorem{mdl}{Model}
  \providecommand{\factname}{Fact}
  \providecommand{\lemmaname}{Lemma}
  \providecommand{\propositionname}{Proposition}
  \providecommand{\remarkname}{Remark}
\providecommand{\corollaryname}{Corollary}
\providecommand{\theoremname}{Theorem}
\begin{document}
\global\long\def\E{\mathbb{E}}
\global\long\def\P{\mathbb{P}}
\global\long\def\Var{\operatorname*{Var}}
\global\long\def\Cov{\operatorname*{Cov}}
\global\long\def\Tr{\operatorname*{Tr}}
\global\long\def\diag{\operatorname*{diag}}
\newcommandx\norm[2][usedefault, addprefix=\global, 1=\#1]{\Vert#1\|_{#2}}
\global\long\def\opnorm#1{\norm[#1]{\textup{op}}}
\global\long\def\rank#1{\operatorname*{rank}(#1)}
\global\long\def\md#1{\operatorname*{maxdiag}(#1)}
\global\long\def\indic{\operatorname*{\mathbb{I}}}
\global\long\def\diff{\operatorname{d}\!}
\global\long\def\argmax{\operatorname*{arg\,max}}
\global\long\def\argmin{\operatorname*{arg\,min}}
\global\long\def\Bern{\operatorname{Ber}}
\global\long\def\pos{\operatorname{pos}}
\global\long\def\round{\operatorname{round}}
\global\long\def\sign{\operatorname{sign}}

\global\long\def\mtx#1{\bm{#1}}
\global\long\def\vct#1{\bm{#1}}
\global\long\def\real{\mathbb{R}}

\global\long\def\A{\mathbf{A}}
\global\long\def\Atilde{\widetilde{\A}}
\global\long\def\B{\mathbf{B}}
\global\long\def\D{\mathbf{D}}
\global\long\def\G{\mathbf{G}}
\global\long\def\H{\mathbf{H}}
\global\long\def\I{\mathbf{I}}
\global\long\def\J{\mathbf{J}}
\global\long\def\K{\mathbf{K}}
\global\long\def\L{\mathbf{L}}
\global\long\def\M{\mathbf{M}}
\global\long\def\P{\mathbb{P}}
\global\long\def\O{\mathbf{O}}
\global\long\def\Q{\mathbf{Q}}
\global\long\def\R{\mathbf{R}}
\global\long\def\U{\mathbf{U}}
\global\long\def\Uperp{\mathbf{U}_{\perp}}
\global\long\def\V{\mathbf{V}}
\global\long\def\Vtilde{\widetilde{\mathbf{V}}}
\global\long\def\Vprime{\mathbf{V}'}
\global\long\def\W{\mathbf{W}}
\global\long\def\Wtilde{\widetilde{\W}}
\global\long\def\X{\mathbf{X}}
\global\long\def\Y{\mathbf{Y}}
\global\long\def\Z{\mathbf{Z}}
\global\long\def\one{\mathbf{1}}
\global\long\def\calV{\mathcal{V}}
\global\long\def\calR{\mathcal{R}}
\global\long\def\calS{\mathcal{S}}
\global\long\def\calT{\mathcal{T}}
\global\long\def\calVrow{\mathcal{V}_{\mbox{row}}}
\global\long\def\calVcol{\mathcal{V}_{\mbox{col}}}
\global\long\def\d{\mathbf{d}}
\global\long\def\u{\mathbf{u}}
\global\long\def\v{\mathbf{v}}
\global\long\def\x{\mathbf{x}}
\global\long\def\z{\mathbf{z}}

\global\long\def\Yhat{\widehat{\Y}}
\global\long\def\Ystar{\Y^{*}}
\global\long\def\Adj{\A}
\global\long\def\ObsAdj{\B}
\global\long\def\Noise{\W}
\global\long\def\HalfAdj{\boldsymbol{\Lambda}}
\global\long\def\HalfNoise{\boldsymbol{\Psi}}
\global\long\def\Yhp{\widehat{\Y}_{\perp}}
\global\long\def\Yround{\widehat{\Y}^{\text{R}}}
\global\long\def\CensorMat{\Z}
\global\long\def\diffmat{\Delta}
\global\long\def\Diffmat{\boldsymbol{\diffmat}}
\global\long\def\AdjH{\Adj^{\text{H}}}
\global\long\def\AdjSR{\Adj^{\text{SR}}}

\global\long\def\yhat{\widehat{Y}}
\global\long\def\ystar{Y^{*}}
\global\long\def\adj{A}
\global\long\def\adjh{A^{\text{H}}}
\global\long\def\adjsr{A^{\text{SR}}}
\global\long\def\obsadj{B}
\global\long\def\noise{W}
\global\long\def\halfadj{\Lambda}
\global\long\def\halfnoise{\Psi}
\global\long\def\yround{\widehat{Y}^{\text{R}}}
\global\long\def\censormat{Z}

\global\long\def\OneMat{\J}
\global\long\def\onemat{J}
\global\long\def\onevec{\mathbf{1}}
\global\long\def\indic{\mathbb{I}}
\global\long\def\IdMat{\I}

\global\long\def\num{n}
\global\long\def\size{\ell}
\global\long\def\numclust{k}
\global\long\def\snr{{I^{*}}}
\global\long\def\inprob{p}
\global\long\def\outprob{q}
\global\long\def\flipprob{\epsilon}
\global\long\def\obsprob{\alpha}
\global\long\def\apxconst{\rho}
\global\long\def\error{\gamma}
\global\long\def\LabelStar{\boldsymbol{\sigma}^{*}}
\global\long\def\labelstar{\sigma^{*}}
\global\long\def\LabelHat{\widehat{\boldsymbol{\sigma}}}
\global\long\def\labelhat{\widehat{\sigma}}
\global\long\def\LabelSDP{\LabelHat^{\textup{sdp}}}
\global\long\def\labelsdp{\labelhat^{\text{sdp}}}
\global\long\def\LabelMLE{\LabelHat^{\textup{mle}}}
\global\long\def\labelmle{\labelhat^{\text{mle}}}
\global\long\def\proxnum{\bar{\num}}

\global\long\def\z{\mathbf{z}}
\global\long\def\labels{\sigma}
\global\long\def\labeltilde{\tilde{\sigma}}
\global\long\def\Label{\boldsymbol{\sigma}}
\global\long\def\Labeltilde{\tilde{\Label}}
\global\long\def\misrate{\operatorname*{\texttt{err}}}
\global\long\def\apkmedian{\texttt{\ensuremath{\apxconst}-kmed}}
\global\long\def\std{\tau}
\global\long\def\renyi{I}
\global\long\def\tune{\lambda^{*}}

\global\long\def\PT{\mathcal{P}_{T}}
\global\long\def\PTperp{\mathcal{P}_{T^{\perp}}}
\global\long\def\positify{{\cal T}}
\global\long\def\calJ{{\cal J}}
\global\long\def\calL{\mathcal{L}}
\global\long\def\calM{{\cal M}}
\global\long\def\calY{{\cal Y}}
\global\long\def\paramset{\Theta}
\global\long\def\prior{\phi}
\global\long\def\bayrisk{B}

\global\long\def\constsumthreeterms{72}
\global\long\def\constoperatornormW{182}
\global\long\def\t{{\displaystyle ^{\top}}}
\global\long\def\Abar{\bar{A}}
\global\long\def\bbar{\bar{b}}
\global\long\def\ybar{\bar{y}}
\global\long\def\xbar{\bar{x}}
\global\long\def\xtilde{\tilde{x}}
\global\long\def\constp{C_{\inprob}}
\global\long\def\consts{C_{\snr}}
\global\long\def\conste{C_{e}}
\global\long\def\constu{c_{u}}
\global\long\def\constdense{c_{d}}
\global\long\def\constgamma{C_{g}}
\global\long\def\constmis{C_{m}}
\global\long\def\constStwo{C_{S_{2}}}
\global\long\def\constpilot{C_{\text{pilot}}}
\global\long\def\consteta{C_{\text{pilot}}'}

\global\long\def\pairset{\mathcal{L}}
 \global\long\def\betterset{\mathcal{Y}}

\global\long\def\var{H}

\title{Achieving the Bayes Error Rate in Synchronization and Block Models
by SDP, Robustly}

\author{Yingjie Fei and Yudong Chen\\
School of Operations Research and Information Engineering\\
Cornell University\\
\{yf275,yudong.chen\}@cornell.edu}

\date{}
\maketitle
\begin{abstract}
We study the statistical performance of semidefinite programming (SDP)
relaxations for clustering under random graph models. Under the $\mathbb{Z}_{2}$
Synchronization model, Censored Block Model and Stochastic Block Model,
we show that SDP achieves an error rate of the form 
\[
\exp\Big[-\big(1-o(1)\big)\proxnum\snr\Big].
\]
Here $\proxnum$ is an appropriate multiple of the number of nodes
and $\snr$ is an information-theoretic measure of the signal-to-noise
ratio. We provide matching lower bounds on the Bayes error for each
model and therefore demonstrate that the SDP approach is Bayes optimal.
As a corollary, our results imply that SDP achieves the optimal exact
recovery threshold under each model. Furthermore, we show that SDP
is robust: the above bound remains valid under semirandom versions
of the models in which the observed graph is modified by a monotone
adversary. Our proof is based on a novel primal-dual analysis of SDP
under a unified framework for all three models, and the analysis shows
that SDP tightly approximates a joint majority voting procedure.
\end{abstract}

\section{Introduction\label{sec:intro}}

Clustering and community detection in graphs is an important problem
lying at the intersection of computer science, optimization, statistics
and information theory. Random graph models provide a venue for studying
the average-case behavior of these problems. In these models, noisy
pairwise observations are generated randomly according to the unknown
clustering structure of the nodes. In its basic form, such a model
involves $\num$ nodes divided into two clusters, which can be represented
by a vector $\LabelStar\in\left\{ \pm1\right\} ^{\num}$. For each
pair of nodes $i$ and $j$, one observes a number $\adj_{ij}\in\real$
generated independently based on the sign of $\labelstar_{i}\labelstar_{j}$,
that is, whether the two nodes are in the same cluster or not. Given
one realization of the random graph $\Adj=(\adj_{ij})\in\real^{\num\times\num}$,
the goal is to estimate the vector $\LabelStar$, or equivalently,
the matrix $\Ystar\coloneqq(\labelstar_{i}\labelstar_{j})\in\{\pm1\}^{\num\times\num}$.
Among the most popular random graph models are the $\mathbb{Z}_{2}$
Synchronization (Z2) model, Censored Block Model (CBM) and Stochastic
Block Model (SBM), where $\adj_{ij}$ follows the Gaussian, censored
$\pm1$ and Bernoulli distributions, respectively (see Section~\ref{sec:setup}
for the details). We consider these three models in this paper. 

Clustering is a challenging problem involving discrete and hence non-convex
optimization. SDP relaxations have emerged as an efficient and robust
approach to this problem, and recent work has witnessed the advances
in establishing rigorous performance guarantees for SDP (see Section~\ref{sec:related}
for a review of this literature). Such guarantees are typically stated
in terms of a signal-to-noise ratio (SNR) measure $\snr$ that depends
on the specific random model (see Equation~(\ref{eq:snr})). In terms
of controlling the \emph{estimation error} of SDP, the best and most
general results to date are given in the line of work in \cite{Guedon2015,fei2019exponential},
which proves that the optimal SDP solution $\Yhat$ satisfies the
bound
\begin{equation}
\misrate(\LabelSDP,\LabelStar)\lesssim\frac{1}{\num^{2}}\norm[\Yhat-\Ystar]1\lesssim\exp\left[-\frac{\num\snr}{C}\right],\label{eq:suboptimal_bound}
\end{equation}
where $C>0$ is a large constant, $\norm[\cdot]1$ denotes the entrywise
$\ell_{1}$ norm, and $\misrate(\LabelSDP,\LabelStar)$ denotes the
fraction of nodes mis-clustered by an estimate $\LabelSDP\in\{\pm1\}^{\num}$,
extracted from $\Yhat$, of the ground-truth cluster labels $\LabelStar$.
The above result is, however, unsatisfactory due to the presence of
a large multiplicative constant $C$ in the exponent, rendering the
bound fundamentally sub-optimal. In particular, the interesting regime
for proving an error bound is when $n\snr\le2\log\num$, as otherwise
SDP is already known to attain zero error. With a large $C$ in the
exponent, the result in (\ref{eq:suboptimal_bound}) provides a rather
loose, sometimes even uninformative,\footnote{Note that $\frac{1}{\num^{2}}\norm[\Yhat-\Ystar]1$ is trivially upper
bounded by $2$ since $\Yhat,\Ystar\in[-1,1]^{\num\times\num}$.} bound in this regime. Moreover, this sub-optimality is intrinsic
to the proof techniques used and cannot be avoided simply by more
careful calculations. 

In this paper, we establish a strictly tighter, and essentially optimal,
error bound on SDP. Let $\proxnum=\num$ for Z2 and CBM, and $\proxnum=\frac{\num}{2}$
for SBM. 
\begin{thm}[Informal]
\label{thm:informal}As $\num\to\infty$, with probability tending
to one, the optimal solution $\Yhat$ of the SDP relaxation satisfies
\begin{equation}
\frac{1}{\num^{2}}\norm[\Yhat-\Ystar]1\le\exp\Big[-\big(1-o(1)\big)\proxnum\snr\Big],\label{eq:informal_SDP_error}
\end{equation}
Moreover, the explicit label estimate $\LabelSDP$ computed by taking
entrywise signs of the top eigenvector of $\Yhat$ satisfies
\begin{equation}
\misrate(\LabelSDP,\LabelStar)\le\exp\Big[-\big(1-o(1)\big)\proxnum\snr\Big].\label{eq:informal_misrate}
\end{equation}
\end{thm}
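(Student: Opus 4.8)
\emph{Proof plan.} The plan is to run a single primal--dual argument covering the $\mathbb{Z}_{2}$, CBM and SBM cases at once, working with the common formulation of Section~\ref{sec:setup}: an observation matrix $\Adj$ whose mean is a scalar multiple of the planted $\Ystar=\LabelStar\LabelStar\t$ (up to an all-ones shift for SBM), and the SNR $\snr$, calibrated so that $\proxnum\snr$ is exactly the large-deviation exponent for one node's vote being misaligned. I would start from the elementary identity $\norm[\Yhat-\Ystar]1=\num^{2}-\LabelStar\t\Yhat\LabelStar$, valid since $\Ystar$ has $\pm1$ entries and $\Yhat$ has entries in $[-1,1]$; so it suffices to lower bound the correlation $\LabelStar\t\Yhat\LabelStar$. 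For each node $i$ let $V_{i}\coloneqq\labelstar_{i}(\Adj\LabelStar)_{i}=\sum_{j}\Adj_{ij}\labelstar_{i}\labelstar_{j}$ be its \emph{vote margin}, the quantity whose sign an oracle majority-voting rule uses to classify node~$i$ given all other labels, and, for a threshold $\tau$ in the window $\opnorm{\Noise}\ll\tau=o(\E V_{i})$, set $\mathcal{B}\coloneqq\{i:V_{i}<\tau\}$ (the ``bad'' nodes, with $\mathcal{G}$ its complement).

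\emph{Primal--dual step.} Following the exact-recovery template but \emph{localized} to $\mathcal{G}$, I would build a positive-semidefinite dual slack matrix $S=\diag(y)+(\text{model-dependent term})-\Adj$ with $y_{i}=V_{i}$ for $i\in\mathcal{G}$ and $y_{i}$ inflated (to order $\E V_{i}$) for $i\in\mathcal{B}$, arranged so that the directions along which $S$ is near-singular are exactly $\LabelStar$ together with the coordinates in $\mathcal{B}$, with spectral gap $\beta\gtrsim\tau-\opnorm{\Noise}$ on the orthogonal complement --- the point being that every $i\in\mathcal{G}$ has $V_{i}\ge\tau$, so the block $\diag(V_{\mathcal{G}})-\Adj_{\mathcal{G}\mathcal{G}}$ is comfortably positive on vectors orthogonal to $\LabelStar$. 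Primal optimality $\langle\Adj,\Yhat-\Ystar\rangle\ge0$ (after the model-dependent recentering) and dual feasibility then give $0\le\langle S,\Yhat\rangle\le\LabelStar\t S\LabelStar=\sum_{i\in\mathcal{B}}(y_{i}-V_{i})$, while the spectral gap converts $\langle S,\Yhat\rangle\ge0$ into $\langle S,\Yhat\rangle\gtrsim\tfrac{\beta}{\num}\norm[\Yhat-\Ystar]1-O(\beta|\mathcal{B}|)$; combining and choosing $\tau$ as large as its window allows, I expect $\norm[\Yhat-\Ystar]1\lesssim\num|\mathcal{B}|$ up to a factor subexponential in $\proxnum\snr$, the bad rows and columns being the dominant contribution (each costing $O(\num)$ in the entrywise $\ell_{1}$ norm) and the good block contributing only at lower order.

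\emph{Size of the bad set.} Next I would bound $|\mathcal{B}|$ by large deviations. Here $\E|\mathcal{B}|=\sum_{i}\P[V_{i}<\tau]$, and a Chernoff bound for the edge distribution of each model gives $\P[V_{i}<\tau]\le e^{-(1-o(1))\proxnum\snr}$: the exponent is the large-deviation rate for the vote margin falling below $\tau$, equal to $(1-o(1))\proxnum\snr$ because $\snr$ is defined as that rate at level $0$ and the shift by $\tau=o(\E V_{i})$ is asymptotically negligible, and the multiple $\proxnum$ ($=\num$ for $\mathbb{Z}_{2}$ and CBM, $=\num/2$ for SBM) records the number of informative summands in $V_{i}$. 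Since each indicator $\indic[V_{i}<\tau]$ depends on $\Adj$ through essentially one row, a bounded-differences/Bernstein argument upgrades this to $|\mathcal{B}|\le\num\,e^{-(1-o(1))\proxnum\snr}$ with probability tending to one; with the previous step this yields~(\ref{eq:informal_SDP_error}).

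\emph{From the matrix bound to the labels, and the main obstacle.} For~(\ref{eq:informal_misrate}), write $\eta\coloneqq\tfrac1{\num^{2}}\norm[\Yhat-\Ystar]1$; then $\|\Yhat-\Ystar\|_{F}^{2}\le2\norm[\Yhat-\Ystar]1=2\eta\num^{2}$ controls $\opnorm{\Yhat-\Ystar}$, and since $\Ystar$ has a single nonzero eigenvalue $\num$, Davis--Kahan places the unit top eigenvector $\widehat{u}$ of $\Yhat$ within $O(\sqrt{\eta})$ of $\LabelStar/\sqrt{\num}$ in $\ell_{2}$, so $\|\sqrt{\num}\,\widehat{u}-\LabelStar\|_{2}^{2}=O(\num\eta)$; a node with $\LabelSDP_{i}=\sign(\widehat{u}_{i})\neq\labelstar_{i}$ contributes at least $1$ to this sum, hence $\misrate(\LabelSDP,\LabelStar)=O(\eta)\le e^{-(1-o(1))\proxnum\snr}$ (the constant absorbed into the $o(1)$ since $\proxnum\snr\to\infty$ in the nontrivial regime). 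The step I expect to be the main obstacle is the primal--dual construction: a coarse certificate of exact-recovery type must take $\tau$ proportional to $\E V_{i}$ and so only captures a constant fraction of the true rate --- which is exactly why the earlier bound~(\ref{eq:suboptimal_bound}) carries the large constant $C$ in the exponent --- whereas here $\tau$ has to be pushed down to $o(\E V_{i})$ while $S$ stays positive semidefinite. Making the localized certificate feasible at this threshold, uniformly over the three models --- in particular controlling the inflated bad block and the cross block between $\mathcal{B}$ and $\mathcal{G}$ through sharp bounds on $\opnorm{\Noise}$ --- is where the bulk of the work lies, and is where the advertised reduction of SDP to joint majority voting is actually realized.
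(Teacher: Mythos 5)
There is a genuine gap, and it sits exactly where you predicted the bulk of the work would lie: the localized dual certificate you describe cannot exist in the form you specify. Take the $\mathbb{Z}_2$ case, write $S=\diag(y)-\Adj$ with $y_i=V_i$ on the good set and $y_i$ inflated to order $\E V_i$ on the bad set, and test positive semidefiniteness on the two-dimensional span of $\LabelStar$ and a single bad coordinate $e_i$. One finds $\LabelStar^{\top}S\LabelStar=\sum_{j\in\mathcal{B}}(y_j-V_j)$, $e_i^{\top}S\LabelStar=\labelstar_i(y_i-V_i)$, $e_i^{\top}S e_i\approx y_i$, so (ignoring lower-order noise terms) PSD-ness of this block forces roughly $(y_i-V_i)^2\le\big(\sum_{j\in\mathcal{B}}(y_j-V_j)\big)\,y_i$; with a single bad node this reads $y_i-V_i\le y_i$, i.e.\ $V_i\ge0$, and more generally it caps the inflation at about $\tau-\opnorm{\Noise}$ rather than $\E V_i$. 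But in the partial-recovery regime bad nodes with negative margins occur with high probability, and if you cap the inflation at $\tau$ the "gap off the bad coordinates'' collapses exactly on the nodes you need it for, while inflating \emph{all} diagonal entries to restore PSD-ness makes $\LabelStar^{\top}S\LabelStar$ of order $\num\Delta$ and destroys the final bound. So the cross terms between the $\LabelStar$ direction and the bad coordinates are not a technicality to be sharpened; they invalidate the diagonal-only certificate, and repairing it would require modifying the bad rows/columns of $\Adj$ as well and then re-controlling $\langle\Adj-\Adj',\Yhat-\Ystar\rangle$, i.e.\ a substantially different argument. (A secondary issue: in the sparse CBM/SBM regime $\opnorm{\Noise}$ is not $O(\sqrt{\num\inprob})$ without trimming, so even the window $\opnorm{\Noise}\ll\tau=o(\E V_i)$ needs a regularized noise bound.)

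For contrast, the paper never builds a certificate adapted to $\Yhat$ and never needs any PSD matrix beyond $\PTperp(\Yhat)$ itself: it uses the plain diagonal matrix $\D$ with $D_{ii}=\labelstar_i\sum_j\adj^0_{ij}\labelstar_j$ (your $V_i$, with no PSD requirement) only through the basic inequality $0\le\langle-\D,\PTperp(\Yhat)\rangle+\langle\Noise,\PTperp(\Yhat)\rangle$, kills the noise term by trimming plus $\Tr[\PTperp(\Yhat)]=\frac{1}{\num}\norm[\Yhat-\Ystar]1$, and then bounds $\sum_i(-D_{ii})b_i$ with $b_i\in[0,4]$ by a worst-case subset sum $\max_{|\calM|=m}\sum_{i\in\calM}(-D_{ii})$, controlled by a Chernoff-plus-union-bound ("order statistics") argument whose exponent is precisely $\proxnum\snr$ because $\inf_{t>0}\E e^{-t\var}=e^{-\snr}$. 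In effect the paper replaces your hard thresholding into good/bad nodes (which loses information about how negative the margins are, and forces the infeasible certificate) by a soft weighting of all margins by the fractional diagonal of $\PTperp(\Yhat)$. Your large-deviation calculation for the margins and your Davis--Kahan step for passing from $\norm[\Yhat-\Ystar]1$ to $\misrate(\LabelSDP,\LabelStar)$ do match the paper, but the central primal--dual step of your plan would have to be abandoned or rebuilt before the claimed bound follows.
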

In all three models, the error exponent $\snr$ is a form of Renyi
divergence. See Theorem \ref{thm:SDP_error} for the precise statement
of our results as well as an explicit, non-asymptotic estimate of
the $o(1)$ term. One should compare this result with the following
minimax lower bound, which shows that any estimator $\LabelHat$ must
incur an error\emph{
\begin{equation}
\misrate(\LabelHat,\LabelStar)\ge\exp\Big[-\big(1+o(1)\big)\proxnum\snr\Big].\label{eq:informal_lower}
\end{equation}
}as the latter represents the best achievable \emph{Bayes risk} of
the problem. For SBM, this bound is established in \cite{zhang2016minimax};
for Z2 and CBM, the above lower bound is new and formally established
in Theorem~\ref{thm:minimax_lower}. In view of the above upper and
lower bound, we see that SDP achieves the optimal Bayes error under
all three models. 

\paragraph{Optimality as a surprise?}

The result above has come as unexpected to us, as it shows that relaxing
the original discrete clustering problem via SDP incurs essentially
no loss in terms of statistical accuracy. As we discuss in Section~\ref{sec:highlight}
and further elaborate in Section~\ref{sec:proof_sketch}, we prove
this result by showing, via a novel primal-dual analysis, that SDP
tightly approximates a majority voting procedure, and this procedure
leads to the optimal error exponent $\snr$.  Interestingly, our
analysis is not tethered to the optimality of $\Yhat$ to the SDP;
rather, it only relies on the fact that $\Yhat$ is feasible and no
worse in objective value than $\Ystar$, and thus the bounds (\ref{eq:informal_SDP_error})
and (\ref{eq:informal_misrate}) in fact hold for any matrix $\Y$
with these two properties. This kind of leeway in the analysis makes
the bounds robust, as we elaborate next.

\paragraph{Robustness.}

We show that the bounds in Theorem \ref{thm:informal} continue to
hold under the so-called \emph{monotone semirandom model}~\cite{feige2001semirandom},
where an adversary is allowed to make arbitrary changes to the graph
in a way that apparently strengthens connections within each cluster
and weakens connections between clusters. While this model seemingly
makes the clustering problem easier, they in fact foil, provably,
many existing algorithms, particularly those that over-exploit the
specific structures of standard SBM in order to achieve tight recovery
guarantees~\cite{feige2001semirandom,moitra2016robust}. In contrast,
our results show that SDP relaxations enjoy a robustness property
that is possessed by few other algorithms. Importantly, this generalization
can be achieved with little extra effort from our main result (see
Theorem~\ref{thm:SBM_semirandom} and its proof).

\paragraph{Exact recovery.}

As another illustration of the strength of Theorem \ref{thm:informal},
we note that it implies sharp condition for SDP to recover $\LabelStar$
\emph{exactly}. In particular, when $\proxnum\snr>(1+\delta)\log n$
for any positive constant $\delta$, the bound (\ref{eq:informal_misrate})
ensures that $\misrate(\LabelSDP,\LabelStar)<\frac{1}{\num}$ and
hence $\misrate(\LabelSDP,\LabelStar)=0$. Moreover, the lower bound~(\ref{eq:informal_lower})
shows that exact recovery is information-theoretically impossible
when $\proxnum\snr<\log n$. In the literature, establishing such
tight exact recovery thresholds often involves specialized and sophisticated
arguments, and has been  the milestones in the remarkable recent development
on community detection~(see Section~\ref{sec:related} for a discussion
of related work). We recover these results, for all three models,
as a corollary of our main theorem by plugging in the corresponding
expressions of $\snr$ and $\proxnum$.  In fact, the non-asymptotic
version of Theorem~\ref{thm:informal} guarantees exact recovery
via SDP with an explicit second-order term $\delta=O\big(1/\sqrt{\log\num}\big)$,
which is a refinement of existing results.

\subsection{Primal-dual analysis\label{sec:highlight}}

Key to the establishment of our results is a novel analysis that exploits
both primal and dual characterizations of the SDP. To set the context,
we note that the sub-optimal bound (\ref{eq:suboptimal_bound}) in
\cite{fei2019exponential,giraud2018partial} is established by utilizing
the primal optimality of the SDP solution $\Yhat$. Their arguments,
however, are too crude to provide a tight estimate of the multiplicative
constant $C$ in the exponent. On the other hand, work on exact recovery
for SDP typically makes use of a dual analysis \cite{hajek2016achieving,bandeira2015convex};
in particular, the optimality of $\Ystar$ is certified by showing
the existence of a corresponding dual optimal solution, often explicitly
in the form of a diagonal matrix $\D$ with $D_{ii}=\labelstar_{i}\sum_{j}\adj_{ij}\labelstar_{j}$.
As this ``dual certificate'' $\D$ is tied to (and constructed using)
$\Ystar$, such a certification approach would only succeed when the
SDP indeed admits $\Ystar$ as an optimal solution.

Here we are concerned with the setting where the optimal solution
$\Yhat$ is different from $\Ystar$, and our goal is to bound their
difference. As it is a priori unknown what $\Yhat$ should look like,
we do not know which matrix to certify or how to construct its associated
dual solution, rendering the above dual certification argument inapplicable.
Instead, we make use of the fact that $\Yhat$ is feasible to the
SDP and has a better primal objective value than $\Ystar$, that is,
$\Yhat$ lies in the sublevel set defined by $\Ystar$ and the constraints
of the SDP. We then characterize the diameter of this sublevel set
by using, perhaps surprisingly, the dual certificate $\D$ of $\Ystar$.
Our analysis is thus fundamentally different from the dual certification
analysis in existing work, which only applies when the sublevel set
consists of a single element $\Ystar$. At the same time, we make
use of $\D$ in a crucial way to achieve an exponential improvement
over previous primal analysis.

Note that our analysis, and hence our error bounds as well, actually
apply to \emph{every element }of this sublevel set, not just the optimal
solution $\Yhat$. As can be seen in our proof, this flexibility plays
an important role in establishing the aforementioned robustness results
under semirandom and heterogeneous SBMs. On the other hand, however,
with this level of generality we probably should not expect the second-order
$o(1)$ term in our bounds to be optimal.

Finally, we emphasize that our results for Z2, CBM and SBM are proved
under a unified framework. The main proof steps are deterministic
and hold for the three models at once; only certain probabilistic
arguments are model-specific.  In  Section~\ref{sec:proof_sketch}
we outline this proof framework, and provide intuitions on the majority
voting mechanism that drives the error rate $e^{-\proxnum\snr}$.
We believe that this unified framework may be broadly useful in studying
SDP relaxations for other discrete problems under average-case/probabilistic
settings.

\subsection{Paper organization}

In Section \ref{sec:related}, we review related work on Z2, CBM and
SBM. In Section \ref{sec:setup}, we formally introduce the models
and the SDP relaxation approach. In Section~\ref{sec:main}, we present
our main results, with a discussion on their consequences and comparison
with existing work. We outline the main steps of the proofs and discuss
the intuitions in Section~\ref{sec:proof_sketch}, with the complete
proofs deferred to the appendix. The paper is concluded in Section~\ref{sec:discuss}
with a discussion on future directions. 

\section{Related work\label{sec:related}}

There is a large array of recent results on community detection and
graph clustering, in particular, under Z2, CBM and SBM. The readers
are referred to the surveys \cite{abbe2016recent,moore2017landscape,li2018survey}
for comprehensive reviews. Without trying to enumerate this body of
work, here we restrict attention to those that study sharp performance
bounds, with a particular focus on work on the SDP relaxation approach.
A more detailed, quantitative comparison with our results is provided
in Section~\ref{sec:main} after our main theorems.

To begin, we note that existing work has considered several recovery
criteria for an estimator $\LabelHat$ of $\LabelStar$: \emph{weak
recovery} means $\LabelHat$ is better than random guess, that is,
$\misrate(\LabelHat,\LabelStar)<\frac{1}{2}$; \emph{partial recovery}
means $\misrate(\LabelHat,\LabelStar)\le\delta$ for a given $\delta\in(0,\frac{1}{2})$;
\emph{exact recovery} means $\misrate(\LabelHat,\LabelStar)=0$ \cite{abbe2016recent}.

\subsection{$\mathbb{Z}_{2}$ Synchronization and Censored Block Model\label{sec:related_Z2_CBM}}

The Z2 model, being a simplified version of the angular/phase synchronization
problem, is studied in \cite{bandeira2017tightness}, which argues
that exact recovery is possible if and only if $\snr>\frac{\log\num}{\num}$.
The work in~\cite{bandeira2015convex} and \cite{abbe2017entrywise}
shows this optimal exact recovery threshold is achieved by SDP and
a spectral algorithm, respectively.    The work in \cite{perry2016optimality,lelarge2017fundamental}
considers low-rank matrix estimation under a spiked Wigner model\textemdash of
which Z2 is a special case\textemdash and identifies the weak recovery
threshold.   

CBM is considered in \cite{abbe2014censored,hajek2015censor}, which
identifies sufficient and necessary conditions for exact recovery.
They also show that SDP achieves a sub-optimal exact recovery threshold,
which is further improved to be optimal in \cite{bandeira2015convex,hajek2015censor}.
 CBM is a special case of the so-called Labelled SBM, whose weak
recovery threshold is studied in \cite{heimlicher2012label,lelarge2015reconstruction}.
Achieving tight partial/weak recovery guarantees in CBM is challenging
due to the sparsity of the observations. A sub-optimal partial recovery
error bound can be achieved by a spectral algorithm with trimming
\cite{ChinRaoVu15}. The work of \cite{saade2015spectral} studies
a sophisticated spectral algorithm based on the non-backtracking operator
or Bethe Hessian, and shows that it achieves the optimal weak recovery
threshold. 

For both Z2 and CBM, we establish for the first time that SDP has
the optimal error rates for partial recovery. Our results also imply,
as an immediate corollary, that SDP achieves the optimal exact recovery
threshold as well as a sub-optimal weak recovery threshold. 

\subsection{Stochastic Block Model\label{sec:related_SBM}}

SBM is arguably the most studied out of these three models. Most related
to us is a line of work that characterizes minimax optimal error rates
for partial recovery. For the binary symmetric SBM, the work \cite{zhang2016minimax}
establishes the aforementioned minimax lower bound (\ref{eq:informal_lower}).
They also provide an exponential-time algorithm that achieves a matching
upper bound (up to an $o(1)$ factor in the exponent). Much research
effort focuses on developing computationally feasible algorithms,
and identifying the minimax rates in more general settings \cite{gao2017achieving,gao2018DCBM,xu2017weighted,yun2014accurate,yun2016optimal,zhang2017theoretical,zhou2018nonasymptotic}.
The monograph \cite{gao2018minimax} provides a review on recent work
on this front. We note that this line of work does not consider the
SDP relaxation approach nor deliver robustness guarantees as we do.
Nevertheless, we will compare our results with theirs after stating
our main theorems.

For exact recovery under binary symmetric SBM with $\inprob,\outprob\asymp\frac{\log\num}{\num}$,
the work in \cite{abbe2016exact,mossel2016bisection} establishes
the sufficient and necessary condition $(\!\sqrt{\inprob}-\sqrt{\outprob})^{2}>\frac{2\log\num}{\num}$.
Follow-up work develops efficient algorithms for exact recovery and
considers extensions to more general SBMs; see, e.g., \cite{abbe2015general,abbe2015recovering,abbe2017entrywise,jog2015information,perry2015semidefinite}.
As mentioned, our results imply sharp bounds for exact recovery.

Weak recovery under the binary symmetric SBM is most relevant in the
sparse regime $\inprob,\outprob\asymp\frac{1}{\num}$. Work of \cite{lelarge2015reconstruction,massoulie2014ramanujan,mossel2015reconstruction}
establishes that the necessary and sufficient condition of weak recovery
is $\frac{\num(\inprob-\outprob)^{2}}{\inprob+\outprob}>2$. Subsequent
work proves similar phase transitions and shows that various algorithms
achieve weak recovery above the optimal threshold for the SBM with
$\numclust\ge2$ and possibly unbalanced clusters; see, e.g., \cite{abbe2015multiple,abbe2018graph,banerjee2018contiguity,bordenave2018nonbacktracking,caltagirone2018recovering,coja2018information,mossel2018proof,stephan2018robustness}.
As discussed later, our results also imply weak recovery guarantees
with a sub-optimal constant.

\subsection{Optimality and robustness of SDP}

For SBM, SDP has been proven to succeed in exact and weak recovery
above the corresponding optimal thresholds (sometimes under additional
assumptions). In particular, see \cite{agarwal2017multisection,bandeira2015convex,hajek2016achieving}
for exact recovery, and \cite{MontanariSen16} for weak recovery.
Prior to our work, SDP was not known to achieve the optimal error
rate between the exact and weak recovery regimes. Sub-optimal polynomial
rates are first proved in \cite{Guedon2015}, later improved to exponential
in \cite{fei2019exponential}, and further generalized in \cite{fei2018hidden,giraud2018partial}.

Robustness has been recognized as a distinct feature of the SDP approach
as compared to other more specialized algorithms for SBMs. Work in
this direction has established robustness of SDP against random erasures
\cite{hajek2015censor,hajek2016extension}, atypical node degrees
\cite{Guedon2015} and adversarial corruptions \cite{hajek2016achieving,MontanariSen16,Cai2014robust,makarychev2016learning}.
The work in \cite{moitra2016robust} investigates the relationship
between statistical optimality and robustness under monotone semirandom
models; we revisit this result in more details later.

\section{Problem Set-up\label{sec:setup}}

In this section, we formally define the models and introduce the SDP
relaxation approach.

\subsection{Notations}

Vectors and matrices are denoted by bold letters. For a vector $\u$,
$u_{i}$ and $u(i)$ both denote its $i$-th entry.   For a matrix
$\M$, we let $M_{ij}$ denote its $(i,j)$-th entry, $\Tr(\M)$ its
trace, and $\norm[\M]1\coloneqq\sum_{i,j}\left|M_{ij}\right|$ its
entry-wise $\ell_{1}$ norm.  We write $\M\succeq0$ if $\M$ is
symmetric positive semidefinite. The trace inner product between two
matrices is $\left\langle \M,\G\right\rangle \coloneqq\Tr(\M^{\top}\G)=\sum_{i,j}M_{ij}G_{ij}$.
Denote by $\I$ and $\OneMat$ the $\num\times\num$ identity matrix
and all-one matrix, respectively, and denote by $\onevec$ the all-one
column vector of length $\num$. 

$\Bern(\mu)$ denotes the Bernoulli distribution with mean $\mu\in[0,1]$.
For a positive integer~$i$, let $[i]\coloneqq\{1,2,\ldots,i\}$.
For a real number $x$, $\left\lceil x\right\rceil $ denotes its
ceiling and $\left\lfloor x\right\rfloor $ denotes its floor. $\indic\{\cdot\}$
is the indicator function. For two non-negative sequences $\{a_{\num}\}$
and $\{b_{\num}\}$, we write $a_{\num}=O(b_{\num})$, $b_{\num}=\Omega(a_{\num})$
or $a_{\num}\lesssim b_{\num}$ if there exists a universal constant
$C>0$ such that $a_{\num}\le Cb_{\num}$ for all $\num$. We write
$a_{\num}\asymp b_{\num}$ if both $a_{n}=O(b_{\num})$ and $a_{\num}=\Omega(b_{\num})$
hold. Asymptotic statements are with respect to the regime $n\to\infty$,
in which case we write $a_{\num}=o(b_{\num})$ and $b_{\num}=\omega(a_{\num})$
if $\lim_{n\to\infty}a_{\num}/b_{\num}=0$. 

\subsection{Models \label{sec:setup_model}}

In this section, we formally describe four models for generating the
observed matrix $\A$ from the unknown ground-truth label vector $\LabelStar\in\left\{ \pm1\right\} ^{\num}$.

In Z2~\cite{bandeira2015convex}, each $\adj_{ij}$ is generated
by adding Gaussian noise to $\labelstar_{i}\labelstar_{j}$. Therefore,
the matrix $\Adj$ contains noisy observations of the true relative
signs between each pair of nodes.

\begin{mdl}[$\mathbb{Z}_2$ Synchronization]\label{mdl:Z2} The observed
matrix $\Adj\in\real^{\num\times\num}$ is symmetric with its entries
$\{\adj_{ij},i\le j\}$ generated independently by 
\[
\adj_{ij}\sim N(\labelstar_{i}\labelstar_{j},\std^{2}),
\]
where $\std>0$ is allowed to scale with $\num$.\footnote{In this and the next three models, we assume that the diagonal entries
of $\Adj$ are random, which is inconsequential: these entries are
independent of the ground-truth $\LabelStar$, and they have no effect
on the solutions of the SDP relaxations~(\ref{eq:CBM_Z2_SDP}) or~(\ref{eq:SBM_SDP})
due to the diagonal constraints therein.} \end{mdl}

In CBM, each $\adj_{ij}$ is generated by flipping $\labelstar_{i}\labelstar_{j}$
with probability $\flipprob$ and then erasing it with probability
$1-\obsprob$. One may interpret $\Adj$ as the edge-censored version
of a noisy signed network~\cite{abbe2014censored}. 

\begin{mdl}[Censored Block Model]\label{mdl:CBM} The observed matrix
$\Adj\in\{0,\pm1\}^{\num\times\num}$ is symmetric with its entries
$\{\adj_{ij},i\le j\}$ generated independently by 
\[
\adj_{ij}=\begin{cases}
\labelstar_{i}\labelstar_{j} & \text{with probability (w.p.) }\obsprob(1-\flipprob),\\
-\labelstar_{i}\labelstar_{j} & \text{w.p. }\obsprob\flipprob,\\
0 & \text{w.p. }1-\obsprob,
\end{cases}
\]
where $\obsprob\in(0,1]$ is allowed to scale with $\num$, and $\flipprob\in(0,\frac{1}{2})$
is a constant. \end{mdl}

In SBM, each $\adj_{ij}$ is a Bernoulli random variable, whose mean
is higher if $\labelstar_{i}\labelstar_{j}=1$. Therefore, $\Adj$
is the adjacency matrix of a random graph in which nodes in the same
cluster are more likely to be connected than those in different clusters~\cite{holland83}.

\begin{mdl}[Binary symmetric SBM]\label{mdl:SBM} Suppose that the
ground-truth $\LabelStar\in\left\{ \pm1\right\} ^{\num}$ satisfies
$\left\langle \LabelStar,\one\right\rangle =0$. The observed matrix
$\Adj\in\{0,1\}^{\num\times\num}$ is symmetric with its entries $\{\adj_{ij},i\le j\}$
generated independently by 
\[
\adj_{ij}\sim\begin{cases}
\Bern(\inprob) & \text{if }\labelstar_{i}\labelstar_{j}=1,\\
\Bern(\outprob) & \text{if }\labelstar_{i}\labelstar_{j}=-1,
\end{cases}
\]
where $0<\outprob<\inprob<1$ are allowed to scale with $\num$. \end{mdl}

In both Model \ref{mdl:Z2} (Z2) and Model \ref{mdl:CBM} (CBM), there
can be any number of $\pm1$'s in the ground-truth label vector $\LabelStar\in\{\pm1\}^{\num}$.
 In Model~\ref{mdl:SBM} (binary symmetric SBM), the cluster labels
$\LabelStar$ are assumed to contain the same number of $1$'s and
$-1$'s, so the two clusters have equal size. Despite their simple
forms, the above models have been of central importance in studying
fundamental limits of clustering problems \cite{abbe2016exact,hajek2016achieving,lelarge2015reconstruction,massoulie2014ramanujan,MontanariSen16,mossel2015reconstruction,mossel2016bisection,abbe2014censored,bandeira2015convex}. 

For the purpose of studying the robustness properties of SDP relaxation,
we consider a semirandom generalization of the binary symmetric SBM.
In this model, a so-called monotone adversary, upon observing the
random adjacency matrix $\Adj$ generated from SBM and the ground-truth
clustering $\LabelStar$, modifies $\Adj$ by \emph{arbitrarily }adding
edges between nodes of the same cluster and deleting edges between
nodes of different clusters. 

\begin{mdl}[Semirandom SBM]\label{mdl:semirandom} A monotone adversary
observes $\Adj$ and $\LabelStar$ from Model \ref{mdl:SBM}, picks
an arbitrary set of pairs of nodes $\pairset\subset\{(i,j)\in[\num]\times[\num]:i<j\}$,
and outputs a symmetric matrix $\Adj^{\text{SR}}\in\{0,1\}^{\num\times\num}$
such that for each $i<j$, 
\[
\adj_{ij}^{\text{SR}}=\begin{cases}
1 & \text{if }(i,j)\in\pairset,\ \labelstar_{i}\labelstar_{j}=1,\\
0 & \text{if }(i,j)\in\pairset,\ \labelstar_{i}\labelstar_{j}=-1,\\
\adj_{ij}, & \text{if }(i,j)\notin\pairset.
\end{cases}
\]
Note that the set $\pairset$ is allowed to depend on the realization
of $\Adj$. \end{mdl}

Semirandom models have a long history with many variants \cite{blum1995coloring}.
Model~\ref{mdl:semirandom} above has been considered in \cite{feige2001semirandom,moitra2016robust}
for SBM. While seemingly revealing more information about the underlying
cluster structure, the semirandom model in fact destroys many local
structures of the basic SBM, thus frustrating many algorithms that
over-exploit such structures. In contrast, SDP is robust against the
monotone adversary under Model \ref{mdl:semirandom}, as we shall
see in Section~\ref{sec:robustness} below.
\begin{rem}
\label{rem:semirandom_Z2_CBM}One may define semirandom versions of
Z2 and CBM in an analogous fashion as above; that is, the adversary
may choose a set $\pairset$ and positive numbers $\{c_{ij},i<j\}$,
and then change $\adj_{ij}$ and $\adj_{ji}$ to $\adj_{ij}+c_{ij}\labelstar_{i}\labelstar_{j}$
for each $(i,j)\in\pairset$. It can be shown that SDP achieves the
same performance guarantees in these semirandom settings of Z2 and
CBM as in the original models. For conciseness we omit such details
and only focus on the semirandom extension of SBM. \\
\end{rem}
For each model discussed above, we define a measure of the signal-to-noise
ratio (SNR): 
\begin{equation}
\snr\coloneqq\begin{cases}
(2\std^{2})^{-1}, & \text{for Model \ref{mdl:Z2}},\\
\left(\!\sqrt{\obsprob(1-\flipprob)}-\sqrt{\obsprob\flipprob}\right)^{2}, & \text{for Model \ref{mdl:CBM}},\\
-2\log\left[\!\sqrt{\inprob\outprob}+\sqrt{(1-\inprob)(1-\outprob)}\right], & \text{for Models \ref{mdl:SBM} and \ref{mdl:semirandom}}.
\end{cases}\label{eq:snr}
\end{equation}
In each case, $\snr$ is a form of Renyi divergence of order $\frac{1}{2}$
\cite{gil2013renyi} between the distributions of $\adj_{ij}$ and
$\adj_{ij'}$ with $\labelstar_{ij}=-\labelstar_{ij'}=1$. In particular,
for Z2, $\snr$ is half of the Renyi divergence (or equivalently,
the Kullback\textendash Leibler divergence) between $N(1,\std^{2})$
and $N(-1,\std^{2})$. For CBM, we have $\snr\approx-\log(1-\snr)$
with the latter being half of the Renyi divergence between two random
variables $\var$ and $-\var$, where $\var$ has probability mass
function $\obsprob(1-\flipprob)\cdot\delta_{1}+\obsprob\flipprob\cdot\delta_{-1}+(1-\alpha)\cdot\delta_{0}$
and $\delta_{a}$ denotes the Dirac delta function centered at $a$.\footnote{In fact, in this case $\snr$ is the squared Hellinger distance between
$\var$ and $-\var$.}  In SBM, $\snr$ is the Renyi divergence between $\Bern(\inprob)$
and $\Bern(\outprob)$. These divergences, and their first-order approximations
(discussed in Section~\ref{sec:preliminary}), are commonly used
as SNR measures in previous work on these models (e.g.,~\cite{abbe2014censored,bandeira2015convex,zhang2016minimax}).

Finally, we define the following distance measure between two vectors
of cluster labels  $\Label,\Label'\in\{\pm1\}^{\num}$: 
\[
\misrate(\Label,\Label')\coloneqq\min_{g\in\{\pm1\}}\frac{1}{\num}\sum_{i\in[\num]}\indic\{g\labels_{i}\ne\labels_{i}'\}.
\]
In words, $\misrate(\Label,\Label')$ is the fraction of nodes that
are assigned a different label under $\Label$ and $\Label'$, modulo
a global flipping of signs. With $\LabelStar$ being the true labels,
$\misrate(\LabelHat,\LabelStar)$ measures the relative error of the
estimator $\LabelHat$.

\subsection{SDP relaxation}

The SDP formulations we consider can be derived as the convex relaxation
of the MLE of $\LabelStar$. Under Models~\ref{mdl:Z2} or~\ref{mdl:CBM},
the MLE  $\LabelMLE$ is given by the solution of the discrete and
non-convex optimization problem
\begin{equation}
\begin{aligned}\max_{\Label\in\{\pm1\}^{\num}}\; & \left\langle \Adj,\Label\Label\t\right\rangle .\end{aligned}
\label{eq:MLE}
\end{equation}
The MLE under Model~\ref{mdl:SBM} includes the extra constraint
$\left\langle \Label,\one\right\rangle =0$ due to the balanced-cluster
assumption. Derivation of the MLE in this form is now standard; see
for example \cite{bandeira2017tightness,bandeira2015convex} for Z2,
\cite{hajek2016extension} for CBM, and \cite{li2018survey} for SBM.
 Now define the lifted variable $\Y=\Label\Label\t$, and observe
that $\Y$ satisfies $\Y\succeq0$, $Y_{ii}=(\labels_{i})^{2}=1$
for $i\in[\num]$. Dropping the constraints that $\Y$ has rank one
and binary entries, we obtain the following SDP relaxation of the
MLE (\ref{eq:MLE}) for Models~\ref{mdl:Z2} or~\ref{mdl:CBM}:
\begin{equation}
\begin{aligned}\Yhat=\argmax_{\mathbf{Y}\in\mathbb{R}^{\num\times\num}}\; & \left\langle \Adj,\Y\right\rangle \\
\mbox{s.t.}\; & \Y\succeq0,\\
 & Y_{ii}=1,\;\;\forall i\in[\num].
\end{aligned}
\label{eq:CBM_Z2_SDP}
\end{equation}
For Model~\ref{mdl:SBM}, using the same reasoning and in addition
replacing the $\left\langle \Label,\one\right\rangle =0$ constraint
by $\left\langle \Y,\OneMat\right\rangle =\left\langle \Label\Label\t,\one\one\t\right\rangle =\left\langle \Label,\one\right\rangle ^{2}=0$,
we arrive at the relaxation:
\begin{equation}
\begin{aligned}\Yhat=\argmax_{\mathbf{Y}\in\mathbb{R}^{\num\times\num}}\; & \left\langle \Adj,\Y\right\rangle \\
\mbox{s.t.}\; & \Y\succeq0,\\
 & Y_{ii}=1,\;\;\forall i\in[\num],\\
 & \left\langle \Y,\OneMat\right\rangle =0.
\end{aligned}
\label{eq:SBM_SDP}
\end{equation}
We also use this SDP for the semirandom Model \ref{mdl:semirandom}.

The optimization problems~(\ref{eq:CBM_Z2_SDP}) and~(\ref{eq:SBM_SDP})
are standard SDPs solvable in polynomial time. We remark that neither
SDP requires knowing the parameters of the data generating processes
(that is, $\std^{2}$, $\obsprob$, $\flipprob$, $\inprob$ and $\outprob$
in Models~\ref{mdl:Z2}\textendash \ref{mdl:SBM}).\footnote{The SDP~(\ref{eq:SBM_SDP}) for SBM does require the knowledge of
two equal-size clusters.} The SDP~(\ref{eq:CBM_Z2_SDP}) was considered in~\cite{bandeira2017tightness,bandeira2015convex}
and~\cite{hajek2016extension} for studying the exact recovery threshold
in Z2 and CBM, respectively, and the SDP~(\ref{eq:SBM_SDP}) was
considered in~\cite{hajek2016achieving} for exact recovery under
the binary symmetric SBM. These formulations can be further traced
back to the work of~\cite{feige2001semirandom} on SDP relaxation
for MIN BISECTION.

We consider the SDP solution $\Yhat$ as an estimate of the ground-truth
matrix $\Ystar:=\LabelStar(\LabelStar)^{\top}$, and seek to characterize
the accuracy of $\Yhat$ in terms of the $\ell_{1}$ error $\norm[\Yhat-\Ystar]1$.
Note that $\Yhat$ is not necessarily a rank-one matrix of the form
$\Yhat=\Label\Label^{\top}$. To extract from $\Yhat$ a vector of
binary estimates of cluster labels, we take the signs of the entries
of the top eigenvector of $\Yhat$ (where the sign of $0$ is $1$,
an arbitrary choice). Letting $\LabelSDP\in\{\pm1\}^{\num}$ be the
vector obtained in this way, we study the error of $\LabelSDP$ as
an estimate of the ground-truth label vector $\LabelStar$, as measured
by $\misrate(\LabelSDP,\LabelStar)$. 

\section{Main results\label{sec:main}}

We present our main results in this section. Henceforth, let $\proxnum:=\num$
in Models~\ref{mdl:Z2} (Z2) and~\ref{mdl:CBM} (CBM), and $\proxnum:=\frac{\num}{2}$
in Models~\ref{mdl:SBM} and~\ref{mdl:semirandom} (SBM and its
semirandom version). To see why this definition of $\proxnum$ is
natural, we note that in Z2 and CBM, the cluster sizes (i.e., numbers
of $1$'s and $-1$'s in $\LabelStar$) do not affect the hardness
of the problem as the distribution is symmetric, and hence $\proxnum$
is simply the number of nodes; in binary symmetric SBMs, recovery
is most difficult when the clusters have equal size\footnote{Otherwise one could recover the large cluster first.}\textemdash which
is the setting we consider\textemdash and accordingly $\proxnum$
is the cluster size. 

\subsection{Minimax lower bounds\label{sec:main_lower}}

Let $\size_{1}(\Label)$ denote the number of 1's in $\Label$. To
state the lower bounds, we consider the following parameter space:
\begin{equation}
\paramset(\num)\coloneqq\begin{cases}
\;\left\{ \pm1\right\} ^{\num}, & \text{for Models \ref{mdl:Z2} and \ref{mdl:CBM}},\\
\;\left\{ \Label\in\left\{ \pm1\right\} ^{\num}:\size_{1}(\Label)\in\left[\frac{\num}{2\beta},\frac{\num\beta}{2}\right]\right\} , & \text{for Model \ref{mdl:SBM}},
\end{cases}\label{eq:param_space}
\end{equation}
where $\beta$ is any number larger than $1+C/\num$ with $C>0$ being
a large enough numerical constant. For Z2 and CBM, $\paramset(\num)$
is the set of all possible cluster label vectors. For SBM, $\paramset(\num)$
consists of label vectors with (roughly) equal-sized clusters; here
we allow for a slight fluctuation in the cluster sizes in SBM following
\cite{zhang2016minimax}.\footnote{This assumption is not essential but makes the proof therein somewhat
simpler.} 

The following theorem gives the minimax lower bound for each model.
\begin{thm}[Lower bound]
\label{thm:minimax_lower}For any constant $c_{0}\in(0,1)$, the
following holds for Model~\ref{mdl:Z2}, Model~\ref{mdl:CBM} with
$\snr=o(1)$, and Model~\ref{mdl:SBM} with $0<\outprob<\inprob<1-c_{0}$.
If $\num\snr\to\infty$ as $\num\to\infty$, then we have
\[
\inf_{\LabelHat}\sup_{\Label\in\paramset(\num)}\E_{\Label}\misrate(\LabelHat,\Label)\ge\exp\left[-\big(1+o(1)\big)\proxnum\snr\right],
\]
where $\E_{\Label}$ denotes expectation under the distribution of
$\Adj$ with $\Label$ being the ground truth, and the infimum is
taken over all estimators of the ground truth (i.e., measurable functions
of $\Adj$).
\end{thm}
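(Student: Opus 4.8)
The plan is to reduce the minimax estimation problem to a collection of binary hypothesis testing problems, one per node, following the classical ``hardest one-dimensional subproblem'' strategy used in \cite{zhang2016minimax}. First I would lower bound the minimax risk by a Bayes risk: place a prior on $\Label\in\paramset(\num)$ under which the coordinates $\labelstar_i$ are (nearly) i.i.d.\ uniform on $\{\pm1\}$ — for Z2 and CBM this is exactly the uniform prior on $\{\pm1\}^\num$, while for SBM one restricts to (roughly) balanced configurations, which is why the slightly enlarged parameter space with the $\beta$-fluctuation is convenient: it lets a product prior assign positive mass to $\paramset(\num)$ up to a $1+o(1)$ factor. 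Since $\misrate(\LabelHat,\Label)$ is an average over the $\num$ nodes of $\indic\{\text{node } i \text{ mislabeled}\}$ (modulo the global sign, which I would handle by conditioning on, say, $\labelstar_1$ being correctly oriented, losing only a factor $2$ that vanishes in the exponent), the Bayes risk decomposes as an average over nodes of the per-node error probability. This reduces matters to: fix node $i$, reveal all other labels $\labelstar_j$, $j\neq i$, to the statistician as side information, and lower bound the probability of misclassifying $\labelstar_i\in\{\pm1\}$ from the observations $\{\adj_{ij}\}_{j\neq i}$.

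Next I would analyze this single-node testing problem. Conditioned on the other labels, the two hypotheses $\labelstar_i=+1$ and $\labelstar_i=-1$ induce two product measures on $(\adj_{ij})_{j\neq i}$: under $+1$, the edge $\adj_{ij}$ has the ``same-cluster'' law if $\labelstar_j=+1$ and the ``different-cluster'' law if $\labelstar_j=-1$, and vice versa under $-1$. The optimal test is the likelihood-ratio (majority-vote-type) test, and its error probability is controlled from below by a Chernoff/Rényi-divergence bound. Concretely, for i.i.d.\ (or independent-but-bounded) product measures the Bayes error of testing $P^{\otimes m}$ vs.\ $Q^{\otimes m}$ is at least $\tfrac14\exp[-m\cdot D_{1/2}(P\|Q)\,(1+o(1))]$ by a standard lower bound on the total affinity $\int \sqrt{dP\,dQ}$ together with a reverse-Markov/second-moment argument (or, cleaner, the sharp asymptotics of the Chernoff exponent). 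Here $m\approx\proxnum$ is the number of ``informative'' neighbors — for Z2 and CBM every one of the $\num-1$ other nodes contributes (so $m=\num(1-o(1))$, i.e.\ $\proxnum=\num$), whereas for SBM only the $\approx \num/2$ neighbors in node $i$'s own cluster versus the other cluster create a distributional difference of the relevant magnitude, giving $\proxnum=\num/2$. In each model the per-edge exponent is exactly the order-$\tfrac12$ Rényi divergence, which by definition in (\ref{eq:snr}) equals $\snr$ (for CBM one uses $\snr=o(1)$ so that $-\log(1-\snr)=\snr(1+o(1))$; for Z2 the Gaussian computation is exact; for SBM it is the Bernoulli Rényi divergence). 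This yields per-node error $\ge \exp[-(1+o(1))\proxnum\snr]$, and averaging over nodes and combining with the prior-mass argument gives the claimed bound.

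The main obstacle — the step requiring the most care — is making the single-node Chernoff lower bound genuinely sharp in the exponent, i.e.\ with constant $(1+o(1))$ rather than a loose $(1+C)$. A crude affinity bound $P_e \gtrsim (\int\sqrt{dP^m\,dQ^m})^2 = (1-\snr)^{2m}$ would give the wrong constant (factor $2$); the fix is a more refined argument — either a change-of-measure/tilting argument evaluating the likelihood ratio at its saddle point and applying a local CLT to the tilted sum of log-likelihood-ratio increments, or invoking the sharp Chernoff–Stein-type asymptotics directly. One must also verify uniform integrability / boundedness conditions on the log-likelihood increments so the local CLT applies: this is routine for Gaussians (Z2) and for the three-point distribution (CBM, where $\snr=o(1)$ keeps things in a mild-noise regime), and for Bernoulli SBM one invokes the standard computation already present in \cite{zhang2016minimax}. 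A secondary technical point is handling the SBM balanced-cluster constraint: conditioning on all other labels changes the effective number of same- vs.\ different-cluster neighbors by at most $O(1)$ (or a lower-order fluctuation), which does not affect the $1+o(1)$ factor — again, this mirrors the treatment in \cite{zhang2016minimax}, and for Z2 and CBM the issue is absent entirely since $\paramset(\num)=\{\pm1\}^\num$.
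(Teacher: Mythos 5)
Your proposal follows essentially the same route as the paper: reduce the minimax risk to a Bayes risk under the (near-)uniform prior, use exchangeability to pass to a single-node testing problem with the other labels revealed (the paper does this via the Zhang--Zhou-style global-to-local lemma and the explicit likelihood-ratio/majority-vote characterization), and then obtain the sharp $(1+o(1))$ exponent by exponentially tilting the sum of log-likelihood increments at $t^{*}$ and controlling the tilted measure near the origin (the paper uses Chebyshev where you suggest a local CLT, an immaterial difference). You also correctly flag the one genuinely delicate point -- that the crude Hellinger-affinity bound loses a factor of two and must be replaced by the change-of-measure argument -- so the plan matches the paper's proof in both structure and the key technical step.
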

For Models \ref{mdl:Z2} and \ref{mdl:CBM}, the proof is given in
Section \ref{sec:proof_minimax_lower_bound}. For Model \ref{mdl:SBM},
the above result is part of \cite[Theorem 2.1]{zhang2016minimax}.

\subsection{Upper bounds on the SDP errors \label{sec:main_SDP}}

We next provide our main results on the error rate of the SDP relaxations
(\ref{eq:CBM_Z2_SDP}) and (\ref{eq:SBM_SDP}). Define the following
sublevel set (or superlevel set to be precise):
\begin{equation}
\betterset(\Adj)\coloneqq\Big\{\Y\in\real^{\num\times\num}:\left\langle \Adj,\Y\right\rangle \ge\left\langle \Adj,\Ystar\right\rangle ,\Y\text{ is feasible to the SDP}\Big\},\label{eq:def_larger_objval_set}
\end{equation}
where feasibility is with respect to the program~(\ref{eq:CBM_Z2_SDP})
for Model \ref{mdl:Z2} or \ref{mdl:CBM}, and to the program~(\ref{eq:SBM_SDP})
for Model \ref{mdl:SBM} or \ref{mdl:semirandom}. In words, $\betterset(\Adj)$
is the set of feasible SDP solutions that attain an objective value
no worse than the ground-truth $\Ystar$. As mentioned, our upper
bounds in fact hold for any solution in $\betterset(\Adj)$.  With
a slight abuse of notation, in the sequel we use $\Yhat$ to denote
an arbitrary matrix in $\betterset(\Adj)$; accordingly, we let $\LabelSDP$
denote the corresponding vector of labels extracted from this $\Yhat$. 

Our main theorem is a non-asymptotic bound on the error rates of the
SDP relaxations.
\begin{thm}[Upper bound]
\emph{\label{thm:SDP_error} }For any constants $c_{0},c_{1}\in(0,1)$,
there exist constants $\consts,\conste,\conste'>0$ such that  the
following holds for Model~\ref{mdl:Z2}, Model~\ref{mdl:CBM}, and
Model~\ref{mdl:SBM} with $0<c_{0}\inprob\le\outprob<\inprob\le1-c_{1}$.
If $\num\snr\ge\consts$, then with probability at least $1-10\exp\left(-\sqrt{\log\num}\right)$,
\[
\begin{aligned}\frac{1}{\num}\norm[\Yhat-\Ystar]1 & \leq\left\lfloor \num\exp\left[-\bigg(1-\conste\sqrt{\frac{1}{\num\snr}}\bigg)\proxnum\snr\right]\right\rfloor ,\\
\misrate(\LabelSDP,\LabelStar) & \leq\exp\left[-\bigg(1-\conste'\sqrt{\frac{1}{\num\snr}}\bigg)\proxnum\snr\right],
\end{aligned}
\qquad\forall\Yhat\in\betterset(\A).
\]
\end{thm}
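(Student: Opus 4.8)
The plan is to carry out the primal-dual analysis advertised in Section~\ref{sec:highlight} inside a unified deterministic framework, and then invoke model-specific probabilistic estimates only where needed. Write $\Adj = \Ystar \odot \HalfAdj + \HalfNoise$ in a way that separates the "signal" part from a centered "noise" part $\HalfNoise$; in Z2 this is literally $\Adj = \Ystar + \std\,\mathbf{G}$, in CBM and SBM it is the analogous centering. Let $\Diffmat := \Yhat - \Ystar$ for an arbitrary $\Yhat\in\betterset(\A)$. Since $\Yhat$ is feasible, $\Diffmat$ has zero diagonal and (using $\Yhat\succeq 0$, $Y_{ii}=1$) satisfies $|\Delta_{ij}|\le 2$ and $\langle \Diffmat,\Diffmat\rangle \le$ (something controlled by $\langle \Diffmat, \Ystar\rangle$ via the PSD constraint, by the standard argument that $\Yhat-\Ystar \succeq -2\Ystar$ on the relevant subspace). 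The key inequality comes from $\langle \Adj,\Yhat\rangle \ge \langle \Adj,\Ystar\rangle$, i.e. $\langle \Adj,\Diffmat\rangle \ge 0$. Substituting the signal/noise split gives $\langle \Ystar\odot\HalfAdj, \Diffmat\rangle \le \langle \HalfNoise, \Diffmat\rangle$; the left side, because $\HalfAdj$ concentrates around its mean (a multiple of the all-ones structure), behaves like a positive multiple of $-\|\Diffmat\|_1$ in the off-diagonal (more precisely, after accounting for the sign pattern of $\Ystar$, it dominates $c\cdot\|\Diffmat\|_1$ minus lower-order fluctuations). So the crux is to bound the right side $\langle \HalfNoise,\Diffmat\rangle$ by something like $\frac{1}{2}c\|\Diffmat\|_1 + (\text{exponentially small term})\cdot\num$.

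The mechanism for that bound — and this is where the dual certificate $\D$ of $\Ystar$ enters — is the following. For each $i$, the $i$-th "dual slack" is $D_{ii} = \labelstar_i\sum_j \adj_{ij}\labelstar_j = \sum_j (\Ystar\odot\Adj)_{ij}$, whose mean is $\proxnum\snr$-ish (a positive drift) and whose fluctuations are sub-Gaussian/sub-exponential. The primal feasibility of $\Yhat$ lets me write $\langle \HalfNoise,\Diffmat\rangle = \langle \Adj,\Diffmat\rangle - \langle \Ystar\odot\HalfAdj,\Diffmat\rangle$ and then, using $\langle \Adj,\Diffmat\rangle \le 0$ would be wrong sign, so instead I add and subtract the dual matrix: $\langle \Adj - \D, \Diffmat\rangle$ with $\D=\diag(D_{11},\dots,D_{\num\num})$ uses $\langle \D,\Diffmat\rangle=0$ (zero diagonal of $\Diffmat$), and $\Adj-\D$ is (essentially, up to the signal) a matrix that is PSD-dominated so that $\langle \Adj - \D,\Diffmat\rangle \le 0$ by the PSD constraint on $\Diffmat$ combined with $\langle\Yhat,\J\rangle=0$ (in SBM). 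This is exactly the classical exact-recovery certificate computation, except I do \emph{not} need $\Adj - \D \succeq 0$ globally; I only need it on the cone spanned by feasible $\Diffmat$'s, and I only need it to hold \emph{up to} a controlled defect. That defect, row by row, is governed by those nodes $i$ where the majority-vote statistic $D_{ii}$ happens to be small or negative — and a union bound / first-moment computation shows the number of such bad nodes is at most $\num\exp[-(1-o(1))\proxnum\snr]$ in expectation, with the $o(1)$ of order $1/\sqrt{\num\snr}$ coming from a Bernstein-type correction in the large-deviation estimate for $D_{ii}$. Combining the two inequalities ($c\|\Diffmat\|_1 \lesssim$ defect) yields $\frac{1}{\num}\|\Diffmat\|_1 \le \num\exp[-(1-\conste/\sqrt{\num\snr})\proxnum\snr]$; rounding down is legitimate since $\|\Diffmat\|_1/\num$ need not be an integer but is compared against the counting bound which is.

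For the label bound, given the $\ell_1$ control, I take $\LabelSDP$ from the top eigenvector of $\Yhat$. A Davis–Kahan / $\sin\Theta$ argument bounds $\|\Yhat - \Ystar\|_{\mathrm{op}}$ (here the operator-norm bound on $\HalfNoise$, the constant $\constoperatornormW$ in the macros, is the input), and then a standard rounding lemma — the one showing that entrywise sign of the leading eigenvector of a matrix close to $\labelstar(\labelstar)^\top$ misclassifies at most $O(\|\Yhat-\Ystar\|_1/\num^2 \cdot \num)$ nodes — converts the matrix error into a node error of the same exponential order, with possibly a different constant $\conste'$. The \textbf{main obstacle} is the first part: making the "defect in the dual certificate" argument quantitatively tight enough that the leading constant in the exponent is exactly $1$ and the correction is only $O(1/\sqrt{\num\snr})$. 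This requires (i) a sharp large-deviation bound for the per-node majority statistic $D_{ii}$ that is tight to second order — i.e. $\P(D_{ii}\le 0)\le \exp[-(1-O(1/\sqrt{\num\snr}))\proxnum\snr]$ — which for SBM is the Chernoff/Renyi bound of \cite{abbe2016exact} type and for Z2/CBM is the Gaussian/Hellinger analogue; and (ii) showing that the \emph{coupling} between the event "$D_{ii}$ is bad" and the magnitude of row $i$ of $\Diffmat$ does not lose a constant factor — handled by the feasibility constraints $|\Delta_{ij}|\le 2$ and $\sum_j\Delta_{ij}\labelstar_j = D_{ii}\cdot(\text{sign stuff})$, which cap each bad row's contribution at $O(\num)$ and each good row's at $O(1)$. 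Everything else (the PSD manipulations, the all-ones constraint bookkeeping in SBM, the $\constoperatornormW\sqrt{\num}$ operator-norm bound, the semirandom extension where the adversary's edits only \emph{increase} $\langle\Ystar\odot\HalfAdj,\Diffmat\rangle$ and hence only help) is deterministic or follows from now-standard concentration, and should be routine.
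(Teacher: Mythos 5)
There is a genuine gap, and it sits exactly at the step your proposal labels the ``main obstacle.'' Your plan is to bound the certificate defect by (number of nodes with bad majority statistic $D_{ii}$) $\times$ $O(\num)$, with the count of bad nodes controlled ``in expectation'' by a first-moment computation. This does not deliver the theorem. First, the quantity that actually needs to be controlled is not a count of nodes with $D_{ii}\le 0$ but a weighted sum of the $-D_{ii}$'s over the rows where $\Yhat$ deviates: nodes with $D_{ii}$ slightly positive still contribute, nodes with very negative $D_{ii}$ contribute a lot, and the weights are fractional. The paper handles this by the projection identity $\Tr[\PTperp(\Yhat)]=\frac{1}{\num}\norm[\Yhat-\Ystar]1$ together with $0\le(\PTperp(\Yhat))_{ii}\le 4$, which reduces everything to bounding $\max_{|\calM|=m}\sum_{i\in\calM}(-D_{ii})$ \emph{simultaneously for all} $m$ up to a pilot bound; your proposal has no analogue of this reduction, and your substitute claim that the signal term dominates $c\norm[\Yhat-\Ystar]1$ with the noise absorbed as $\tfrac{c}{2}\norm[\Yhat-\Ystar]1$ plus a small term is precisely the Gu\'edon--Vershynin/primal route that the paper identifies as intrinsically losing a large constant in the exponent. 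Second, the probabilistic input you propose (first moment plus Markov/union ``over bad nodes'') cannot give the stated conclusion in the sparse regime $\num\snr=\Theta(1)$, which the theorem explicitly covers: with failure probability required to be $10e^{-\sqrt{\log\num}}$ and exponent loss only $O(1/\sqrt{\num\snr})$, one needs the large-deviation bound for subset sums of the $-D_{ii}$'s beating the entropy term $m\log(\num e/m)$ over all $\binom{\num}{m}$ subsets (the paper's Lemma on subset sums), and one must also handle the dependence among the $D_{ii}$'s induced by shared edges of the symmetric matrix (the paper splits each subset sum into independent and doubled terms and uses the exact MGF identities $\E e^{t^{*}(-\var)}\approx e^{-\snr}$, $\E e^{2t^{*}(-\var)}=1$). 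Your sketch addresses neither the entropy-versus-rate tradeoff nor the dependence. Finally, the identity you invoke to couple bad rows to the error, $\sum_{j}\Delta_{ij}\labelstar_{j}=D_{ii}\cdot(\text{sign})$, is false: $\Diffmat$ is a primal variable and $D_{ii}$ is a data statistic, and nothing in feasibility ties them this way; correspondingly the claim that each ``good'' row contributes $O(1)$ to the defect is unsupported, and summed over up to $\num$ good rows it would in any case swamp the target bound.

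Two further, smaller points. The noise term cannot simply be folded into the defect: the paper isolates $S_{2}=\langle\Noise,\PTperp(\Yhat)\rangle$ and shows it is a \emph{second-order} correction, which for CBM/SBM requires a trimming argument plus a Grothendieck-type pilot bound on $\norm[\Yhat-\Ystar]1$; without this, the $O(1/\sqrt{\num\snr})$ second-order term is not attainable. For the second inequality your route (Davis--Kahan plus sign rounding) is essentially the paper's, except the perturbation fed to Davis--Kahan should be $\Yhat-\Ystar$ measured through $\norm[\Yhat-\Ystar]F^{2}\le2\norm[\Yhat-\Ystar]1$, not an operator-norm bound on the noise matrix.
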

The proof is given in Appendices~\ref{sec:proof_SDP_error_rate}
and~\ref{sec:proof_cluster_error_rate}. Note the floor operation
in the first inequality above; consequently, we have $\norm[\Yhat-\Ystar]1=0$
whenever the exponent is strictly less than $-\log n$. We later explore
the implication of this fact for exact recovery. 
\begin{rem}
The assumption $c_{0}\inprob\le\outprob$ for Model \ref{mdl:SBM}
is common in the literature on minimax rates \cite{gao2017achieving,gao2018DCBM,zhang2017theoretical,zhou2018nonasymptotic}.
It stipulates that $\inprob$ and $\outprob$ are on the same order
(but their difference can be vanishingly small). This is the regime
where the clustering problem is hard, and it is the regime we focus
on. The assumption arises from a technical step in our proof, and
it is currently not clear to us whether this assumption is necessary.
We would like to point out that in Section 5.1 of \cite{gao2017achieving},
a weaker minimax upper bound is obtained with this assumption dropped.
\end{rem}
Letting $\num\to\infty$ in Theorem \ref{thm:SDP_error}, we immediately
obtain the following asymptotic result.
\begin{cor}[Upper bound, asymptotic]
\label{cor:SDP_error_asymp}For any constants $c_{0},c_{1}\in(0,1)$,
the following holds for Model~\ref{mdl:Z2}, Model~\ref{mdl:CBM},
and Model~\ref{mdl:SBM} with $0<c_{0}\inprob\le\outprob<\inprob\le1-c_{1}$.
If $\num\snr\to\infty$, then with probability $1-o(1)$,
\[
\begin{aligned}\frac{1}{\num}\norm[\Yhat-\Ystar]1 & \leq\Big\lfloor\num\exp\left[-\big(1-o(1)\big)\proxnum\snr\right]\Big\rfloor,\\
\misrate(\LabelSDP,\LabelStar) & \leq\exp\left[-\big(1-o(1)\big)\proxnum\snr\right],
\end{aligned}
\qquad\forall\Yhat\in\betterset(\A).
\]
\end{cor}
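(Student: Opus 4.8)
The plan is to obtain this asymptotic statement as an immediate consequence of the non-asymptotic bound in Theorem~\ref{thm:SDP_error}; there is no genuinely new content to prove, only a passage to the limit. First I would fix the constants $c_0,c_1\in(0,1)$ and invoke Theorem~\ref{thm:SDP_error}, which supplies constants $\consts,\conste,\conste'>0$ depending only on $c_0,c_1$ --- in particular, not on $\num$. Since we are assuming $\num\snr\to\infty$, there is an index $\num_0$ beyond which $\num\snr\ge\consts$, so for all $\num\ge\num_0$ the two inequalities of Theorem~\ref{thm:SDP_error} hold on an event of probability at least $1-10\exp(-\sqrt{\log\num})$, simultaneously for every $\Yhat\in\betterset(\A)$.

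Next I would rewrite the quantitative factors in asymptotic form. Because $\conste$ is a fixed constant and $\num\snr\to\infty$, the quantity $\conste\sqrt{1/(\num\snr)}$ tends to $0$ and hence is $o(1)$; likewise $\conste'\sqrt{1/(\num\snr)}=o(1)$. Substituting these into the exponents $-(1-\conste\sqrt{1/(\num\snr)})\proxnum\snr$ and $-(1-\conste'\sqrt{1/(\num\snr)})\proxnum\snr$ of Theorem~\ref{thm:SDP_error} produces the exponent $-(1-o(1))\proxnum\snr$ appearing in the corollary, with the floor operation in the $\ell_1$ bound carried over unchanged. In the same way, $10\exp(-\sqrt{\log\num})\to0$, so the success probability is $1-o(1)$. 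Combining these gives both displayed inequalities of the corollary.

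The only step that warrants any care --- and the closest thing here to an obstacle --- is confirming that $\consts,\conste,\conste'$ are true numerical constants that do not depend on $\num$, since otherwise the limits above would be meaningless; but this independence is precisely what the statement of Theorem~\ref{thm:SDP_error} asserts. I therefore expect the proof to be only a few lines long once Theorem~\ref{thm:SDP_error} is in hand.
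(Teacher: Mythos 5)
Your proposal is correct and is exactly how the paper obtains this corollary: the paper simply states that letting $\num\to\infty$ in Theorem~\ref{thm:SDP_error} yields the result, which amounts to the same observations you make (the constants $\consts,\conste,\conste'$ are $\num$-independent, so the correction terms $\conste\sqrt{1/(\num\snr)}$, $\conste'\sqrt{1/(\num\snr)}$ are $o(1)$ and $10\exp(-\sqrt{\log\num})=o(1)$). Nothing further is needed.
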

Comparing the upper bound in Corollary~\ref{cor:SDP_error_asymp}
with the minimax lower bound in Theorem~\ref{thm:minimax_lower},
we see that the SDP achieves the optimal error rate, up to a second-order
$o(1)$ term in the exponent.\footnote{We note that Theorem~\ref{thm:minimax_lower} bounds the error in
expectation and holds for a parameter space containing $\LabelStar$
with slightly unequal-sized clusters. The results in Theorem~\ref{thm:SDP_error}
and Corollary~\ref{cor:SDP_error_asymp} are high-probability bounds.
Extending these upper bounds to the setting of slightly unequal-sized
clusters is possible, albeit tedious; we leave this to future work.} Moreover, Theorem \ref{thm:SDP_error} provides an explicit, non-asymptotic
upper bound for the $o(1)$ term in the exponent. This bound, taking
the form of $O(1/\sqrt{\num\snr})$, yields second-order characterization
of various recovery thresholds and is strong enough to provide non-trivial
guarantees in the sparse graph regime\textemdash these points are
discussed in Section \ref{sec:consequence} to follow. We do not expect
this $O(1/\sqrt{\num\snr})$ bound to be information-theoretic optimal,
for reasons discussed in Section~\ref{sec:highlight}.

As a passing note, the above error upper bounds also apply to the
MLE solution $\LabelMLE$, since the optimality of $\LabelMLE$ to
the program (\ref{eq:MLE}) implies that $(\LabelMLE)(\LabelMLE)\t\in\betterset(\Adj)$.
 In fact, our proof of the upper bounds involves showing that the
SDP solutions closely approximate the MLE; we elaborate on this point
in Section~\ref{sec:proof_sketch}.

\subsection{Robustness under Semirandom Models\label{sec:robustness}}

Our next result shows that the error rate of the SDP is unaffected
by passing to the semirandom model. Recall the definition in Equation~(\ref{eq:def_larger_objval_set}),
so  $\betterset(\AdjSR)$ is the sublevel set of the SDP (\ref{eq:SBM_SDP})
with $\AdjSR$ as the input.
\begin{thm}[Semirandom SBM]
\emph{\label{thm:SBM_semirandom}} Suppose that $\AdjSR$ is generated
according to Model \ref{mdl:semirandom}. The conclusions of Theorem
\ref{thm:SDP_error} and Corollary \ref{cor:SDP_error_asymp} continue
to hold for the program~(\ref{eq:SBM_SDP}) with $\betterset(\Adj)$
replaced by $\betterset(\AdjSR)$.
\end{thm}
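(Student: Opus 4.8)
The plan is to reduce the semirandom case to the original random case by a monotonicity argument, exploiting the crucial remark made earlier in the paper that the upper bounds of Theorem~\ref{thm:SDP_error} are proved not for the optimal SDP solution but for \emph{every} element of the superlevel set $\betterset(\cdot)$. Concretely, I would first observe that it suffices to show that $\betterset(\AdjSR)\subseteq\betterset(\Adj)$ with probability one over the randomness of $\Adj$ (with $\pairset$ chosen adversarially, possibly depending on $\Adj$), conditioned on the high-probability event of Theorem~\ref{thm:SDP_error} for the \emph{random} matrix $\Adj$. Once this inclusion is established, any $\Yhat\in\betterset(\AdjSR)$ is also in $\betterset(\Adj)$, and the conclusions of Theorem~\ref{thm:SDP_error} and Corollary~\ref{cor:SDP_error_asymp} apply verbatim with the same probability $1-10\exp(-\sqrt{\log\num})$, since those statements are quantified over all of $\betterset(\Adj)$. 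Note that $\betterset(\AdjSR)$ is always nonempty, as $\Ystar$ itself is feasible and attains its own objective value.

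The heart of the argument is the set inclusion, which in turn rests on a pointwise inequality. Write $\AdjSR=\Adj+\Diffmat$, where $\Diffmat$ is the (adversarial, $\Adj$-dependent) perturbation. By the definition of Model~\ref{mdl:semirandom}, for each pair $(i,j)\in\pairset$ the adversary only replaces $\adj_{ij}$ by $1$ when $\labelstar_i\labelstar_j=1$ and by $0$ when $\labelstar_i\labelstar_j=-1$; since $\adj_{ij}\in\{0,1\}$, in both cases the change satisfies $\Delta_{ij}\cdot\labelstar_i\labelstar_j = |\Delta_{ij}|\ge 0$, i.e. $\Delta_{ij}=|\Delta_{ij}|\,Y^*_{ij}$ with $Y^*_{ij}=\labelstar_i\labelstar_j$. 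Hence $\langle\Diffmat,\Ystar\rangle=\sum_{i<j}2|\Delta_{ij}|\ge 0$, and more importantly, for any $\Y$ feasible to~(\ref{eq:SBM_SDP}) we have $Y_{ij}\le 1$ (this follows from $Y_{ii}=Y_{jj}=1$ together with $\Y\succeq0$, which forces the $2\times2$ principal minor to be nonnegative), so
\[
\langle\Diffmat,\Y\rangle=\sum_{i<j}2\Delta_{ij}Y_{ij}=\sum_{i<j}2|\Delta_{ij}|\,Y^*_{ij}Y_{ij}\le\sum_{i<j}2|\Delta_{ij}| = \langle\Diffmat,\Ystar\rangle,
\]
using $Y^*_{ij}Y_{ij}\le 1$ in the last step (the constraint set of~(\ref{eq:SBM_SDP}) is the same for $\Adj$ and $\AdjSR$, so ``feasible'' is unambiguous). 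Therefore, for any feasible $\Y$,
\[
\langle\Adj,\Y\rangle = \langle\AdjSR,\Y\rangle-\langle\Diffmat,\Y\rangle \ge \langle\AdjSR,\Y\rangle-\langle\Diffmat,\Ystar\rangle,
\]
and if moreover $\Y\in\betterset(\AdjSR)$, i.e. $\langle\AdjSR,\Y\rangle\ge\langle\AdjSR,\Ystar\rangle=\langle\Adj,\Ystar\rangle+\langle\Diffmat,\Ystar\rangle$, then combining gives $\langle\Adj,\Y\rangle\ge\langle\Adj,\Ystar\rangle$, so $\Y\in\betterset(\Adj)$. This proves $\betterset(\AdjSR)\subseteq\betterset(\Adj)$ deterministically, for every choice of $\pairset$, which is more than enough.

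Two small technical points remain, neither of which I expect to be difficult. First, one must be careful that the high-probability event in Theorem~\ref{thm:SDP_error} for the random matrix $\Adj$ is chosen \emph{before} the adversary acts; since the adversary in Model~\ref{mdl:semirandom} observes $\Adj$ and $\LabelStar$ but the probabilistic guarantee is a statement about $\Adj$ alone, the event and its probability are unaffected by the subsequent adversarial modification, and the bounds hold uniformly over all realizations of $\pairset$. Second, one should note that Theorem~\ref{thm:SDP_error} for SBM requires the parameter restriction $0<c_0\inprob\le\outprob<\inprob\le1-c_1$; this is inherited unchanged, as it constrains only the underlying random model, not the adversary. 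The main obstacle---if any---is purely bookkeeping: making sure the quantifiers line up so that ``for all $\Yhat\in\betterset(\A)$, with probability $\ge 1-10\exp(-\sqrt{\log\num})$'' transfers correctly to ``for all $\Yhat\in\betterset(\AdjSR)$ and all adversary strategies, with the same probability,'' which the deterministic set inclusion above handles cleanly.
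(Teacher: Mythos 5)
Your proposal is correct and follows essentially the same route as the paper: both establish the deterministic inclusion $\betterset(\AdjSR)\subseteq\betterset(\Adj)$ by combining the monotone structure of the adversarial perturbation with the feasibility bound $|Y_{ij}|\le1$ (equivalently the sign conditions on $\yhat_{ij}-\ystar_{ij}$), which gives $\langle\Adj,\Yhat-\Ystar\rangle\ge\langle\AdjSR,\Yhat-\Ystar\rangle\ge0$, and then invoke the fact that Theorem~\ref{thm:SDP_error} holds uniformly over $\betterset(\Adj)$. The only cosmetic difference is that you phrase the key step via $\Diffmat=\AdjSR-\Adj$ and $\langle\Diffmat,\Y\rangle\le\langle\Diffmat,\Ystar\rangle$, while the paper writes the same inequality entrywise.
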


\begin{proof}
This theorem admits a short proof, thanks to the validity of Theorem
\ref{thm:SDP_error} for any $\Yhat\in\betterset(\Adj)$. Recall that
$\AdjSR$ is obtained by monotonically modifying the matrix $\Adj$
generated from Model~\ref{mdl:SBM} (binary symmetric SBM). Let $\Yhat$
be an arbitrary element of $\betterset(\AdjSR)$. By definition of
$\AdjSR$ and the feasibility of $\Yhat$, we have for all $i,j\in[\num]$
\[
\begin{cases}
\adj_{ij}^{\text{SR}}\ge\adj_{ij},\:\yhat_{ij}-\ystar_{ij}\le0, & \text{if }\ystar_{ij}=1,\\
\adj_{ij}^{\text{SR}}\le\adj_{ij},\:\yhat_{ij}-\ystar_{ij}\ge0, & \text{if }\ystar_{ij}=-1.
\end{cases}
\]
The fact $\Yhat$ has objective value no worse than $\Ystar$ under
$\Adj^{\text{SR}}$, together with the above inequalities, implies
that $0\le\langle\Adj^{\text{SR}},\Yhat-\Ystar\rangle\le\langle\Adj,\Yhat-\Ystar\rangle$.
This further implies that $\Yhat\in\betterset(\Adj)$. Therefore,
invoking Theorem \ref{thm:SDP_error} gives the desired result.
\end{proof}
\begin{rem}
As mentioned in Remark~\ref{rem:semirandom_Z2_CBM}, one may define
semirandom versions of Models~\ref{mdl:Z2} (Z2) and~\ref{mdl:CBM}
(CBM). It is easy to see that the proof above applies to these models
without change. Therefore, the SDP approach is also robust under semirandom
Z2 and CBM.
\end{rem}
As an immediate consequence of Theorem~\ref{thm:SBM_semirandom},
we obtain error bounds for a generalization of the standard SBM (Model~\ref{mdl:SBM})
with heterogeneous edge probabilities, where 
\[
\adj_{ij}\overset{\text{i.i.d.}}{\sim}\begin{cases}
\Bern(\inprob_{ij}) & \text{if }\labelstar_{i}\labelstar_{j}=1,\\
\Bern(\outprob_{ij}) & \text{if }\labelstar_{i}\labelstar_{j}=-1,
\end{cases}
\]
with $0\le\outprob_{ij}\le\outprob<\inprob\le\inprob_{ij}\le1$. 
\begin{cor}[Heterogeneous SBM]
\emph{\label{cor:SBM_heterogeneous}} Under the above generalization
of Model~\ref{mdl:SBM}, the conclusions of Theorem \ref{thm:SDP_error}
and Corollary \ref{cor:SDP_error_asymp} continue to hold for the
program~(\ref{eq:SBM_SDP}).
\end{cor}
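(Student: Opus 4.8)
The plan is to realize the heterogeneous model as a particular instance of the monotone semirandom Model~\ref{mdl:semirandom} with base parameters $\inprob$ and $\outprob$, and then invoke Theorem~\ref{thm:SBM_semirandom} essentially verbatim. Concretely, I will construct a coupling of the heterogeneous adjacency matrix $\Adj^{\mathrm{het}}$ with a pair $(\Adj,\pairset)$, where $\Adj$ is drawn from the standard binary symmetric SBM (Model~\ref{mdl:SBM}) with within/between edge probabilities $\inprob,\outprob$, and $\pairset\subset\{(i,j):i<j\}$ is a set depending on $\Adj$ together with some independent auxiliary randomness, such that the semirandom matrix $\AdjSR$ built from $(\Adj,\pairset)$ as in Model~\ref{mdl:semirandom} coincides with $\Adj^{\mathrm{het}}$.

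For the coupling, handle the two kinds of off-diagonal pairs separately; fix $i<j$. If $\labelstar_i\labelstar_j=1$, draw $\adj_{ij}\sim\Bern(\inprob)$ and, independently, $\xi_{ij}\sim\Bern\!\big(\tfrac{\inprob_{ij}-\inprob}{1-\inprob}\big)$ --- a valid probability because $\inprob\le\inprob_{ij}\le1$ and $\inprob\le1-c_1<1$ --- and set $\adj^{\mathrm{het}}_{ij}\coloneqq\adj_{ij}\vee\xi_{ij}$, so that $\P(\adj^{\mathrm{het}}_{ij}=1)=\inprob_{ij}$ and $\adj^{\mathrm{het}}_{ij}\ge\adj_{ij}$ surely. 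If $\labelstar_i\labelstar_j=-1$, draw $\adj_{ij}\sim\Bern(\outprob)$ and, independently, $\eta_{ij}\sim\Bern\!\big(\tfrac{\outprob_{ij}}{\outprob}\big)$ --- valid because $0\le\outprob_{ij}\le\outprob$, and $\outprob>0$ under the hypothesis $c_0\inprob\le\outprob$ --- and set $\adj^{\mathrm{het}}_{ij}\coloneqq\adj_{ij}\wedge\eta_{ij}$, so that $\P(\adj^{\mathrm{het}}_{ij}=1)=\outprob_{ij}$ and $\adj^{\mathrm{het}}_{ij}\le\adj_{ij}$ surely. The $\adj^{\mathrm{het}}_{ij}$ are mutually independent with exactly the prescribed marginals, so (after symmetrizing) $\Adj^{\mathrm{het}}$ has the law of the heterogeneous model. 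Now put $\pairset\coloneqq\{(i,j):i<j,\ \adj^{\mathrm{het}}_{ij}\ne\adj_{ij}\}$: on a within-cluster pair in $\pairset$ one has $\adj^{\mathrm{het}}_{ij}=1$, on a between-cluster pair in $\pairset$ one has $\adj^{\mathrm{het}}_{ij}=0$, and off $\pairset$ one has $\adj^{\mathrm{het}}_{ij}=\adj_{ij}$ --- which is precisely the rule defining $\AdjSR$ from $(\Adj,\pairset)$ in Model~\ref{mdl:semirandom}. Since $\{\xi_{ij},\eta_{ij}\}$ is independent of $\Adj$, conditioning on it makes $\pairset$ a deterministic function of $\Adj$, i.e.\ a legitimate monotone adversary.

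With the coupling in place, Theorem~\ref{thm:SBM_semirandom} applies conditionally on $\{\xi_{ij},\eta_{ij}\}$; and, as its proof shows, the high-probability event on which the estimates of Theorem~\ref{thm:SDP_error} hold is the event of Theorem~\ref{thm:SDP_error} for the base SBM $\Adj$ --- it depends on $\Adj$ alone, has probability at least $1-10\exp(-\sqrt{\log\num})$, and on it the bounds hold for \emph{every} $\Yhat\in\betterset(\AdjSR)$ irrespective of $\pairset$. Integrating out the auxiliary randomness, we conclude that on an event of probability at least $1-10\exp(-\sqrt{\log\num})$ the bounds of Theorem~\ref{thm:SDP_error} (and hence their asymptotic forms in Corollary~\ref{cor:SDP_error_asymp}) hold for all $\Yhat\in\betterset(\Adj^{\mathrm{het}})$, with $\snr$ and $\proxnum$ as in the base SBM with parameters $\inprob,\outprob$. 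There is no genuine obstacle here; the only points that need attention are checking that the auxiliary success probabilities are well defined (which is exactly where the constraints $0<c_0\inprob\le\outprob<\inprob\le1-c_1$ and $0\le\outprob_{ij}\le\outprob<\inprob\le\inprob_{ij}\le1$ are used) and observing that Model~\ref{mdl:semirandom}'s allowance for $\pairset$ to depend on the realization of $\Adj$ is broad enough to absorb the extra coupling randomness by conditioning.
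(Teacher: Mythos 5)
Your proposal is correct and follows essentially the same route as the paper: the paper's proof of this corollary is a one-line reduction of the heterogeneous SBM to the semirandom Model~\ref{mdl:semirandom} via a coupling (cited from prior work) followed by Theorem~\ref{thm:SBM_semirandom}, and your argument is exactly that reduction with the coupling written out explicitly. Your additional care about the auxiliary randomness (conditioning on $\{\xi_{ij},\eta_{ij}\}$ and noting the good event depends only on $\Adj$) is a valid way of filling in the detail the paper leaves to the citation.
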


\begin{proof}
The corollary follows from the same coupling argument as in \cite[Appendix V]{fei2019exponential},
which shows that the Heterogeneous SBM can be reduced to the semirandom
Model \ref{mdl:semirandom}. 
\end{proof}
The results above show that SDP is insensitive to monotone modification
and heterogeneous probabilities. We emphasize that such robustness
is by no means automatic. With non-uniformity in the probabilities,
the likelihood function no longer has a known, rigid form, a property
heavily utilized in many algorithms. The monotone adversary can similarly
alter the graph structure by creating hotspots and short cycles. Even
worse, the adversary is allowed to make changes \emph{after} observing
the realized graph,\footnote{We therefore strengthen the robustness results in the previous work
\cite{fei2019exponential}, which does not allow such adaptivity.} thus producing unspecified dependency among all edges in the observed
data and leading to major obstacles for existing analysis of iterative
algorithms. 

We would like to mention that the work in \cite{moitra2016robust}
shows that the semirandom model makes weak recovery strictly harder.
While not contradicting their results technically, the fact that our
error bounds remain unaffected under this model does demand a closer
look. We note that our bounds are optimal only up to a second-order
term in the exponent and consequently do not attain the optimal weak
recovery limit. Also, our robustness results on error rates are tied
to a specific form of SDP analysis (using the sublevel set $\betterset(\Adj)$).
In comparison, for exact recovery SDP is robust \emph{by design }to
the semirandom model, as is well recognized in past work \cite{feige2001semirandom,chen2012sparseclustering,hajek2016achieving}.

\subsection{Consequences \label{sec:consequence}}

Theorem~\ref{thm:SDP_error} and Corollary~\ref{cor:SDP_error_asymp}
imply sharp sufficient conditions for several types of recovery:
\begin{itemize}
\item \textbf{Exact recovery:} Whenever $\proxnum\snr\ge(1+\delta)\log\num$
for any constant $\delta>0$, we have $\norm[\Yhat-\Ystar]1=0$ by
Corollary~\ref{cor:SDP_error_asymp} (note the floor operation therein)
and hence SDP achieves exact recovery by itself without any rounding/post-processing
steps.
\item \textbf{Second-order refinement:} Using the non-asymptotic Theorem~\ref{thm:SDP_error},
we can obtain the following refinement of the above result: exact
recovery provided that $\frac{\proxnum\snr}{\log\num}\ge1+\frac{C_{1}}{\sqrt{\log\num}}+\frac{C_{2}}{\log\num}$
for some constants $C_{1},C_{2}>0$. 
\item \textbf{Weak recovery:} When $\proxnum\snr\ge C$ for a sufficiently
large constant $C$, Theorem~\ref{thm:SDP_error} ensures that $\misrate(\LabelSDP,\LabelStar)<\frac{1}{2}$
and hence SDP achieves weak recovery.
\item \textbf{Sparse regime: }Theorem~\ref{thm:SDP_error}\textbf{ }ensures
that SDP achieves an arbitrarily small constant error when $\num\snr$
is a sufficiently large but finite constant. This corresponds to the
sparse graph regime with constant expected degrees, namely $\obsprob,\inprob,\outprob=\Theta(1/\num)$
in CBM and SBM. Many results on minimax rates require $\num\snr$,
and hence the degrees, to diverge (e.g.,$\ $\cite{gao2017achieving,zhang2017theoretical}). 
\end{itemize}
Moreover, these conditions remain sufficient under the semirandom
model. Below we specialize the above results to each of the three
models.

\subsubsection{$\mathbb{Z}_{2}$ Synchronization}

Recall that $\snr:=\frac{1}{2\std^{2}}$ and $\proxnum=\num$ under
Model~\ref{mdl:Z2}. Consequently, SDP achieves exact recovery if
$\std^{2}\le\frac{\num}{2\log\num+C\!\sqrt{\log\num}}.$ This is a
refinement of the best existing threshold $\std^{2}\le\frac{\num}{(2+\delta)\log\num}$
in \cite{abbe2017entrywise,bandeira2015convex}. 

We also have weak recovery by SDP if $\std^{2}\le\frac{\num}{C}$,
which matches, up to constants, the optimal threshold $\std^{2}<\num$
established in \cite{perry2016optimality,lelarge2017fundamental}.

\subsubsection{Censored Block Model}

Recall that $\snr:=\obsprob\big(\!\sqrt{1-\flipprob}-\sqrt{\flipprob}\big)^{2}$
and $\proxnum=\num$ under Model~\ref{mdl:CBM}. Consequently, SDP
achieves exact recovery if $\frac{\num}{\log\num}\snr\ge1+\frac{C}{\sqrt{\log\num}}.$
This result is a second-order improvement over the threshold $\frac{\num}{\log\num}\snr\ge1+\delta$
for SDP established in the work \cite{hajek2016extension}. The same
work also proves that exact recovery is impossible if $\frac{\num}{\log\num}\snr<1-\delta$. 

Noting that $\snr\asymp\obsprob(1-2\flipprob)^{2}$ (cf.~Fact~\ref{fact:CBM_renyi_equivalence}),
we also have weak recovery by SDP if $\num\obsprob(1-2\flipprob)^{2}\ge C$,
which matches, up to constants, the optimal threshold $\num\obsprob(1-2\flipprob)^{2}>1$
proved in \cite{heimlicher2012label,lelarge2015reconstruction,saade2015spectral}. 

\subsubsection{Stochastic Block Model}

Recall that $\snr:=-2\log\big[\sqrt{\inprob\outprob}+\sqrt{(1-\inprob)(1-\outprob)}\big]$
and $\proxnum=\frac{\num}{2}$ under Model~\ref{mdl:SBM}, and note
the equivalence $\snr=(1+o(1))(\!\sqrt{\inprob}-\sqrt{\outprob})^{2}$
valid for $0<\outprob\asymp\inprob=o(1)$. Consequently, SDP achieves
exact recovery if $\num(\!\sqrt{\inprob}-\sqrt{\outprob})^{2}\ge(2+\delta)\log\num$,
recovering the result established in \cite{hajek2016achieving,bandeira2015convex}. 

We also have the following refinement: exact recovery provided that
$\frac{\num\snr}{\log\num}\ge2+\frac{C_{1}}{\sqrt{\log\num}}+\frac{C_{2}}{\log\num}$.
This result is comparable to the sufficient condition $\frac{\num(\!\sqrt{\inprob}-\sqrt{\outprob})^{2}}{\log\num}\ge2+\frac{C}{\sqrt{\log\num}}+\omega\left(\frac{1}{\log\num}\right)$
for SDP established in \cite{hajek2016achieving}, whereas the necessary
and sufficient condition for the optimal estimator (MLE) is $\frac{\num(\!\sqrt{\inprob}-\sqrt{\outprob})^{2}}{\log\num}\ge2-\frac{\log\log\num}{\log\num}+\omega\left(\frac{1}{\log\num}\right)$
\cite{abbe2016exact,mossel2016bisection}.  

Finally, noting that $\snr\asymp(\inprob-\outprob)^{2}/\inprob$ (cf.$\ $Fact
\ref{fact:SBM_renyi_equivalence}), we have weak recovery by SDP if
$\num(\inprob-\outprob)^{2}/\inprob\ge C$. This condition matches,
up to constants, the so-called Kesten-Stigum (KS) threshold $\num(\inprob-\outprob)^{2}/(\inprob+\outprob)>2$,
which is optimal \cite{massoulie2014ramanujan,abbe2015multiple,mossel2013proof,mossel2015reconstruction}.

\subsection{Comparison with existing results}

In this section we focus on partial recovery under the binary symmetric
SBM (Model~\ref{mdl:SBM}), and compare with the existing work that
derives sharp error rate bounds achievable by polynomial-time algorithms.
To be clear, the algorithms considered in this line of work are very
different from ours. In particular, most existing results require
a good enough initial estimate of the true clusters. Obtaining such
an initial solution (typically using spectral clustering) is itself
a non-trivial task.

Using neighbor voting and variational inference algorithms, the work
in \cite{gao2017achieving,zhang2017theoretical}  obtains an error
bound of the same form as our Corollary~\ref{cor:SDP_error_asymp},
though they do not provide non-asymptotic results as in our Theorem~\ref{thm:SDP_error}.
 The work in \cite{yun2014accurate} considers a spectral algorithm
and proves the error bound $\misrate(\LabelHat,\LabelStar)\le\exp\left[-(1-\delta)(\sqrt{\inprob}-\sqrt{\outprob})^{2}\cdot\num/2\right]$
for any constant $\delta>0$ if $\num\inprob\to\infty$. Recalling
$\snr=(1+o(1))(\!\sqrt{\inprob}-\sqrt{\outprob})^{2}$, we find that
our Corollary~\ref{cor:SDP_error_asymp} is better as we allow the
$\delta$ term to vanish. The recent work in \cite{abbe2017entrywise}
uses a novel perturbation analysis to show that a very simple spectral
algorithm achieves the error bound in Corollary~\ref{cor:SDP_error_asymp}
under the assumption $\frac{\num}{\log\num}(\!\sqrt{\inprob}-\sqrt{\outprob})^{2}\ge\delta'$
for any constant $\delta'>0$; their assumption excludes the sparse
regime with $\inprob,\outprob=o\left(\frac{\log\num}{\num}\right)$
and is stronger than our assumption $\num\snr\to\infty$ in Corollary~\ref{cor:SDP_error_asymp}.
Compared to the above works, another strength of our results is that
we provide an explicit bound for the second-order term in the exponent;
we know of few error rate results (with the exception discussed below)
that offer this level of accuracy.

Concurrently to our work, the paper \cite{zhou2018nonasymptotic}
establishes a tight non-asymptotic error bound for an EM-type algorithm.
Translated to our notation, their bound takes the form 
\[
\misrate(\LabelHat,\LabelStar)\le\exp\left[-\left(1+\frac{2}{\num\snr}\log(\num\inprob)\right)\frac{\num\snr}{2}\right],
\]
which is valid under the assumption $\num\snr\gtrsim\sqrt{\num\inprob}\gtrsim1$.
Their assumption is order-wise more restrictive than that in our
Theorem~\ref{thm:SDP_error}, but their error bound has a better
second-order term in the exponent. We do note that their algorithm
is fairly technical: it requires data partition and the leave-one-out
tricks to ensure independence, degree truncation to regularize spectral
clustering, and blackbox solvers for $K$-means and matching problems.
In comparison, the SDP approach is much simpler conceptually.

Finally, we emphasize that we also provide robustness guarantees under
the monotone semirandom model and non-uniform edge probabilities.
In comparison, it is unclear if comparable robustness results can
be established for the algorithms above, as these algorithms and their
analyses make substantial use of the properties of the standard SBM,
particularly the complete independence among edges and the specific
form of the likelihood function. 

\section{Proof Outline \label{sec:proof_sketch}}

In this section we outline our proofs of the lower and upper bounds.
In the process we provide insights on how the error rate  $e^{-\proxnum\snr}$
arises and why SDP achieves it. 

\subsection{Proof outline of Theorem~\ref{thm:minimax_lower}\label{sec:proof_sketch_lower}}

The intuition behind the lower bound is relatively easy to describe.
To illustrate the idea, take as an example the Z2 model, where $\adj_{ij}\overset{\text{i.i.d.}}{\sim}N(1,\std^{2})$,
and assume that $\labelstar_{i}=1,\forall i$. It is not hard to see
that the error fraction $\misrate(\LabelHat,\LabelStar)$ for any
estimator $\LabelHat$ is lower bounded by the probability of recovering
the label $\labelstar_{1}$ for the first node given true labels of
the other nodes; see the Lemma~\ref{lem:global_to_local} for the
precise argument. For this one-node problem, the optimal Bayes estimate
of $\labelstar_{1}$ is given by the sign of the majority vote $\sum\nolimits _{j=1}^{\num}\adj_{1j}$:
\begin{equation}
\labelhat_{1}=\argmax_{\labels\in\{\pm1\}}\left\{ \labels\cdot\sum\nolimits _{j=1}^{\num}\adj_{1j}\right\} =\sign\left(\sum\nolimits _{j=1}^{\num}\adj_{1j}\right);\label{eq:vote_single}
\end{equation}
see Lemma~\ref{lem:lower_bound_local_risk}.  It follows that the
error probability of recovering $\labelstar_{1}=1$ is
\[
\P\Big\{\labelhat_{1}\neq1\Big\}=\P\left\{ \sum\nolimits _{j=1}^{\num}\adj_{1j}<0\right\} =\exp\Big[-(1+o(1))\cdot\num I(0)\Big],
\]
where the last step can be justified in general by the large deviation
theory, with $I(x)\coloneqq\sup_{t>0}[tx-\log\E e^{t(-\adj_{11})}]$
being the rate function (see, e.g., Cramer's Theorem \cite[Theorem 2.2.3]{dembo2010LDP}).
In our setting, a direct calculation suffices, as is done in Lemma~\ref{lem:local_risk_minimax}.
The error exponent 
\begin{equation}
I(0)=-\inf_{t>0}\left\{ \log\E e^{t(-\adj_{11})}\right\} =\snr\label{eq:rate_function}
\end{equation}
is precisely our SNR measure, a quantity we will encounter again in
proving the upper bound.

The above intuition remains valid for CBM and SBM, though the specific
forms of the majority voting procedure and the rate $\snr$ vary.
The complete proof is given in Section~\ref{sec:proof_minimax_lower_bound}.

\subsection{Proof outline of Theorem \ref{thm:SDP_error}\label{sec:proof_sketch_upper}}

To prove the upper bound for SDP,  we proceed in three steps:

\textbf{Step 1: }As mentioned in Section \ref{sec:intro}, we construct
a diagonal matrix $\D$ with $D_{ii}=\labelstar_{i}\sum_{j}\adj_{ij}\labelstar_{j}$,
which takes the same form as the ``dual certificate'' used in previous
work. The construction of $\D$ allows us to establish the \emph{basic
inequality}:
\[
0\le\left\langle -\D,\PTperp(\Yhat)\right\rangle +\left\langle \Adj-\E\Adj,\PTperp(\Yhat)\right\rangle ,\quad\text{for any \ensuremath{\Yhat\in\betterset}(\ensuremath{\Adj})};
\]
see the proof of Lemma \ref{lem:basic_inequality} for the details
of this critical step. Here $\PTperp$ is an appropriate projection
operator that satisfies $\Tr\big[\PTperp(\Yhat)\big]=\frac{1}{\num}\norm[\Yhat-\Ystar]1$,
thus exposing the $\ell_{1}$ error of $\Yhat$ that we seek to control.

\textbf{Step 2: }We proceed by showing that the second term $S_{2}:=\langle\Adj-\E\Adj,\PTperp(\Yhat)\rangle$
in the basic inequality is negligible compared to the first term $\langle-\D,\PTperp(\Yhat)\rangle$;
see Proposition~\ref{prop:S2} for a quantitative version of this
claim, whose proof involves certain trimming argument in the case
of CBM and SBM. Dropping $S_{2}$ from the basic inequality hence
yields
\begin{equation}
0\le\langle-\D,\PTperp(\Yhat)\rangle=\sum\nolimits _{i=1}^{\num}(-D_{ii})b_{i},\label{eq:basic_informal}
\end{equation}
where $b_{i}:=\big(\PTperp(\Yhat)\big)_{ii}$ satisfies $\sum_{i\in[\num]}b_{i}=m:=\frac{1}{\num}\norm[\Yhat-\Ystar]1$. 

\textbf{Step 3: }The $b_{i}$'s take fractional values in general,
but must be bounded in $[0,4]$ (cf.~\ref{fact:PTperp}). We use
this fact to upper bound the RHS of (\ref{eq:basic_informal}) by
its worst-case value, hence obtaining 
\begin{equation}
0\le\sum\nolimits _{i=1}^{\num}(-D_{ii})b_{i}\lesssim\max_{\substack{\calM\subseteq[\num]\\
\left|\calM\right|=m
}
}\sum\nolimits _{i\in\calM}(-D_{ii}).\label{eq:order_stat}
\end{equation}
 This argument is reminiscent of the \textquotedblleft order statistics\textquotedblright{}
analysis in \cite{fei2019exponential}, though here we provide a more
fine-grained bound; see Section~\ref{sec:S1+S2} for details. The
rest of this step, done in Lemma~\ref{lem:subset_sum_ineq}, establishes
a probabilistic bound for the RHS of (\ref{eq:order_stat}) and ultimately
gives rise to the error exponent $-\snr$. To illustrate the idea,
we again consider Z2 with $\labelstar_{i}\equiv1,$ in which case
$D_{ii}=\sum_{j\in[\num]}\adj_{ij}$. For a fixed set $\calM$ with
$\left|\calM\right|=m$, the RHS of (\ref{eq:order_stat}) can be
controlled using the Chernoff bound:
\begin{align*}
\P\left\{ \sum\nolimits _{i\in\calM}\left(\sum\nolimits _{j=1}^{\num}(-\adj_{ij})\right)>0\right\}  & \le\inf_{t>0}\E\exp\left[t\sum\nolimits _{i\in\calM}\left(\sum\nolimits _{j=1}^{\num}(-\adj_{ij})\right)\right]\\
 & =\left(\inf_{t>0}\E\exp(-t\adj_{11})\right)^{\num m}=e^{-nm\snr},
\end{align*}
where the last two steps follow from independence and the expression
(\ref{eq:rate_function}) for $\snr$. By a union bound over all ${\num \choose m}\approx\left(\frac{\num}{m}\right)^{m}$
such $\calM$'s, we obtain
\[
\P\left\{ \max_{\substack{\calM\subseteq[\num]\\
\left|\calM\right|=m
}
}\sum\nolimits _{i\in\calM}\left(\sum\nolimits _{j=1}^{\num}(-\adj_{ij})\right)>0\right\} \le\left(\frac{\num}{m}\right)^{m}\cdot e^{-nm\snr}=\left(e^{\log(\num/m)-\num\snr}\right)^{m}.
\]
If $\log(\num/m)-\num\snr<0$, then RHS above is $\ll1$ and thus
with high probability the negation of (\ref{eq:order_stat}) holds,
a contradiction. We therefore must have $\log(\num/m)-\num\snr\ge0$,
which implies the desired error bound $\frac{m}{\num}\le e^{-\num\snr}$.
The second-order term in the exponent comes from a more accurate calculation
for Steps 2 and 3. \\

The previous arguments are closely connected to our proof for the
lower bound outlined above. Note that the MLE of the entire vector
$\LabelStar=\onevec$ is given by the ``joint majority voting''
procedure
\[
\LabelMLE=\argmax_{\Label\in\{\pm1\}^{\num}}\left\{ \sum\nolimits _{i=1}^{\num}\labels_{i}\cdot\left(\sum\nolimits _{j=1}^{\num}\adj_{ij}\right)\right\} ;
\]
one should compare this equation with the ``single-node majority
voting'' in (\ref{eq:vote_single}). The maximality of the above
$\LabelMLE$ over $\LabelStar=\onevec$, as well as the fact that
$1-\labelmle_{i}\in\{0,2\}$, implies that 
\[
0\le\sum\nolimits _{i=1}^{\num}\left(1-\labelmle_{i}\right)\cdot\left(\sum\nolimits _{j=1}^{\num}(-\adj_{ij})\right)\lesssim\max_{\substack{\calM\subseteq[\num]\\
\left|\calM\right|=m
}
}\sum_{i\in\calM}(-D_{ii})
\]
if we set $m=\frac{1}{2}\sum_{i=1}^{n}(1-\labelmle_{i})=\num\misrate(\LabelMLE,\LabelStar)$.
Note that this inequality is the same as (\ref{eq:order_stat}), so
following the arguments above shows that the MLE satisfies the same
error bound $\frac{m}{\num}\le e^{-\num\snr}$. We therefore see that
the SDP solution closely approximates the MLE in the above precise
sense, and both of them achieve the Bayes rate. The form of the rate
$e^{-\num\snr}$ arises from a majority voting mechanism, in the proofs
for both lower and upper bounds.

Again, the above intuition remains valid for CBM and SBM, though the
calculation of the rate~$\snr$ varies. The details of the proof
are given in Appendices~\ref{sec:proof_SDP_error_rate} and~\ref{sec:proof_cluster_error_rate}.

\section{Discussion\label{sec:discuss}}

In this paper, we analyze the error rates of the SDP relaxation approach
for clustering under several random graph models, namely Z2, CBM and
the binary symmetric SBM, via a unified framework. We show that SDP
achieves an exponentially-decaying error with a sharp exponent, matching
the minimax lower bound for all three models. We also show that these
results continue to hold under monotone semirandom models, demonstrating
the robustness of SDP.

Immediate future directions include extensions to problems with multiple
and unbalanced clusters, as well as to closely related models such
as weighted SBM. It is also of interest to see if better estimates
of the second order term can be obtained, and if there is a fundamental
tradeoff between statistical optimality and robustness. More broadly,
it would be interesting to explore the applications of the techniques
in this paper in analyzing SDP relaxations for other discrete problems.

\section*{Acknowledgement}

Y. Fei and Y. Chen were partially supported by the National Science
Foundation CRII award 1657420 and grant 1704828.

\appendix
\appendixpage

\section{Preliminaries\label{sec:preliminary}}

In this section we record several notations and facts that are useful
for subsequent proofs. 

We first define a random variable $\var$ that encapsulates the distributions
of the three models:
\begin{itemize}
\item For Model \ref{mdl:Z2} (Z2), let $\var\sim N(1,\std^{2})$. 
\item For Model \ref{mdl:CBM} (CBM), let $\var$ have probability mass
function $\obsprob(1-\flipprob)\cdot\delta_{1}+\obsprob\flipprob\cdot\delta_{-1}+(1-\alpha)\cdot\delta_{0}$,
where $\delta_{a}$ denotes the Dirac delta function centered at $a$.
 
\item For Model \ref{mdl:SBM} (SBM), let $\var=Y-Z$, where $Y\sim\Bern(\inprob)$,
$Z\sim\Bern(\outprob)$, and $Y,Z$ are independent.
\end{itemize}
It can be seen that under Model \ref{mdl:Z2} or \ref{mdl:CBM}, we
have $\adj_{ij}\sim\labelstar_{i}\labelstar_{j}\var$ (here $\sim$
means equality in distribution); under Model~\ref{mdl:SBM}, we have
$\adj_{ii'}-\adj_{ij}\sim\var$ if $\labelstar_{i}=\labelstar_{i'}=-\labelstar_{j}$. 

Let $t^{*}$ be the minimizer of the moment generating function $t\mapsto\E e^{-t\var}$,
which has the explicit expression

\begin{equation}
t^{*}\coloneqq\begin{cases}
\frac{1}{\std^{2}}, & \text{for Model \ref{mdl:Z2}},\\
\frac{1}{2}\log\frac{1-\flipprob}{\flipprob}, & \text{for Model \ref{mdl:CBM}},\\
\frac{1}{2}\log\frac{\inprob(1-\outprob)}{\outprob(1-\inprob)}, & \text{for Model \ref{mdl:SBM}}.
\end{cases}\label{eq:def_tstar}
\end{equation}
Note that $t^{*}>0$. We later verify that $\E e^{-t\var}\approx e^{-\snr}$
for all three models (see Facts~\ref{fact:Z2_magic_identity}, \ref{fact:CBM_magic_identity}
and~\ref{fact:SBM_magic_identity}). Also define the quantity 
\begin{equation}
\tune\coloneqq\begin{cases}
0 & \text{for Model \ref{mdl:Z2} and \ref{mdl:CBM}},\\
\frac{1}{2t^{*}}\log\frac{1-\outprob}{1-\inprob}, & \text{for Model \ref{mdl:SBM}},
\end{cases}\label{eq:def_tune_param}
\end{equation}
which plays a role only in Model~\ref{mdl:SBM} (SBM).

Finally, for Model \ref{mdl:CBM}, we let $\inprob\coloneqq\obsprob(1-\flipprob)$
and $\outprob\coloneqq\obsprob\flipprob$; this notation is chosen
to bring out the similarity between Models \ref{mdl:CBM} and \ref{mdl:SBM}.\\

We record several simple estimates for the above quantities $t^{*}$
and $\tune$ as well as the SNR measure $\snr$ defined in~(\ref{eq:snr}).
The proofs are given in Sections~\ref{sec:proof_CBM_estimate} and~\ref{sec:proof_SBM_estimate}
to follow.
\begin{fact}
\label{fact:CBM_estimate}Under Model \ref{mdl:CBM} with the notation
$\inprob\coloneqq\obsprob(1-\flipprob)$ and $\outprob\coloneqq\obsprob\flipprob$,
if $0<\outprob\le\inprob\le1$, then
\begin{enumerate}[label={(\alph*)},ref={\thefact(\alph*)}]
\item \label{fact:CBM_tstar_lesssim_(p-q)/p}$t^{*}\le\frac{1-\flipprob}{2\flipprob}\cdot\frac{\inprob-\outprob}{\inprob}$. 
\item \label{fact:CBM_renyi_equivalence}$\snr\in\left[\frac{(\inprob-\outprob)^{2}}{4\inprob},\frac{(\inprob-\outprob)^{2}}{\inprob}\right]$.
\end{enumerate}
\end{fact}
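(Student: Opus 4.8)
The plan is to treat the two parts separately, reducing each to an elementary one-variable inequality after first eliminating the nuisance parameter $\obsprob$ via the substitutions $\inprob \coloneqq \obsprob(1-\flipprob)$ and $\outprob \coloneqq \obsprob\flipprob$.

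For part (a), I would start by simplifying the factor $\frac{\inprob - \outprob}{\inprob} = 1 - \frac{\outprob}{\inprob} = 1 - \frac{\flipprob}{1-\flipprob} = \frac{1 - 2\flipprob}{1 - \flipprob}$, so that, recalling $t^{*} = \frac{1}{2}\log\frac{1-\flipprob}{\flipprob}$ from (\ref{eq:def_tstar}), the target bound $t^{*} \le \frac{1-\flipprob}{2\flipprob}\cdot\frac{\inprob-\outprob}{\inprob}$ collapses to $\frac{1}{2}\log\frac{1-\flipprob}{\flipprob} \le \frac{1-2\flipprob}{2\flipprob}$. Setting $r \coloneqq \frac{1-\flipprob}{\flipprob}$, so that $\frac{1}{\flipprob} = r + 1$ and $r > 1$ since $\flipprob \in (0, \frac{1}{2})$, the right-hand side equals $\frac{1}{2}(\frac{1}{\flipprob} - 2) = \frac{r-1}{2}$, and the claim is exactly $\log r \le r - 1$, which holds for every $r > 0$. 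This settles part (a).

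For part (b), I would use the factorization $\inprob - \outprob = (\sqrt{\inprob} - \sqrt{\outprob})(\sqrt{\inprob} + \sqrt{\outprob})$, which, since $\snr = (\sqrt{\inprob} - \sqrt{\outprob})^{2}$ for Model~\ref{mdl:CBM} by (\ref{eq:snr}) and $\inprob > \outprob$ so that no division by zero occurs, gives $\snr = \frac{(\inprob-\outprob)^{2}}{(\sqrt{\inprob}+\sqrt{\outprob})^{2}}$. It then suffices to sandwich the denominator: from $0 < \outprob \le \inprob$ we get $\sqrt{\inprob} \le \sqrt{\inprob} + \sqrt{\outprob} \le 2\sqrt{\inprob}$, hence $\inprob \le (\sqrt{\inprob}+\sqrt{\outprob})^{2} \le 4\inprob$. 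Substituting these two bounds for the denominator yields $\frac{(\inprob-\outprob)^{2}}{4\inprob} \le \snr \le \frac{(\inprob-\outprob)^{2}}{\inprob}$, which is the claim.

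Both reductions are elementary, so I do not anticipate any genuine obstacle; the only points that warrant care are the bookkeeping that translates the CBM parameters $(\obsprob, \flipprob)$ into the auxiliary notation $(\inprob, \outprob)$ consistently, and verifying that the hypotheses $\flipprob \in (0, \frac{1}{2})$ from Model~\ref{mdl:CBM} and $0 < \outprob \le \inprob \le 1$ from the statement are exactly what is needed at each step — specifically $r > 1$ in part (a) and $\outprob < \inprob$ (strictness) together with $\outprob > 0$ in part (b).
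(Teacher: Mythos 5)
Your proof is correct and follows essentially the same route as the paper: part (a) reduces to the elementary inequality $\log r\le r-1$ (the paper's $\log(1+x)\le x$ applied to $t^{*}=\frac{1}{2}\log(1+\frac{\inprob-\outprob}{\outprob})$), and part (b) uses the same factorization $\snr=\frac{(\inprob-\outprob)^{2}}{(\sqrt{\inprob}+\sqrt{\outprob})^{2}}$ with the denominator sandwiched between $\inprob$ and $4\inprob$. No issues.
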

\begin{fact}
\label{fact:SBM_estimate}Under Model \ref{mdl:SBM}, if $0<\outprob<\inprob<1$,
then the following hold.
\begin{enumerate}[label={(\alph*)},ref={\thefact(\alph*)}]
\item \label{fact:SBM_tune_param_vs_p_q}$\tune\in(\outprob,\inprob)$.
\item \label{fact:SBM_renyi_equivalence}If in addition $p\le1-c$ for some
constant $c\in(0,1)$, then $\snr\asymp\frac{(\inprob-\outprob)^{2}}{\inprob}$.
\item \label{fact:SBM_tstar_lesssim_(p-q)/p}If in addition $\inprob\le1-c$
and $\outprob\ge c_{0}\inprob$ for some constants $c\in(0,1)$, $c_{0}\in(0,1)$,
then $t^{*}\lesssim\frac{\inprob-\outprob}{\inprob}$. 
\end{enumerate}
\end{fact}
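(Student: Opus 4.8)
The plan is to verify the three parts of Fact~\ref{fact:SBM_estimate} by direct computation, using throughout the decomposition $2t^{*}=\log\frac{\inprob}{\outprob}+\log\frac{1-\outprob}{1-\inprob}$ read off from~(\ref{eq:def_tstar}), in which both summands are strictly positive since $0<\outprob<\inprob<1$. I expect part~(a) to be the only place an actual idea is needed; parts~(b) and~(c) become routine once the right identities are in hand.

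For part~(a), since $t^{*}>0$ the defining relation $2t^{*}\tune=\log\frac{1-\outprob}{1-\inprob}$ makes $\tune>\outprob$ equivalent to $\log\frac{1-\outprob}{1-\inprob}>\outprob\cdot 2t^{*}$; substituting the decomposition above and rearranging turns this into
\[
\outprob\log\frac{\outprob}{\inprob}+(1-\outprob)\log\frac{1-\outprob}{1-\inprob}>0 ,
\]
whose left-hand side is the Kullback--Leibler divergence $D(\Bern(\outprob)\,\|\,\Bern(\inprob))$, strictly positive because $\outprob\neq\inprob$. Symmetrically, $\tune<\inprob$ reduces to $D(\Bern(\inprob)\,\|\,\Bern(\outprob))>0$. (Alternatively, one may avoid naming KL and just observe that $x\mapsto x\log\frac{x}{\inprob}+(1-x)\log\frac{1-x}{1-\inprob}$ is convex and vanishes together with its derivative at $x=\inprob$, hence is positive at $x=\outprob$, and likewise with $\inprob,\outprob$ interchanged.) This is the crux of the whole Fact.

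For part~(b), I would set $s\coloneqq\sqrt{\inprob\outprob}+\sqrt{(1-\inprob)(1-\outprob)}$, so that $\snr=-2\log s$. Reading $s$ as the inner product of the unit vectors $(\sqrt\inprob,\sqrt{1-\inprob})$ and $(\sqrt\outprob,\sqrt{1-\outprob})$ gives $s\le 1$ by Cauchy--Schwarz, while $s\ge\sqrt{(1-\inprob)(1-\outprob)}\ge 1-\inprob\ge c$ uses $\outprob<\inprob\le 1-c$; thus $s\in[c,1)$ and $\snr=-2\log s\asymp 1-s$. Next, the elementary identity
\[
1-s=\tfrac12\big(\sqrt\inprob-\sqrt\outprob\big)^{2}+\tfrac12\big(\sqrt{1-\inprob}-\sqrt{1-\outprob}\big)^{2},
\]
combined with $\big(\sqrt\inprob-\sqrt\outprob\big)^{2}=\frac{(\inprob-\outprob)^{2}}{(\sqrt\inprob+\sqrt\outprob)^{2}}\asymp\frac{(\inprob-\outprob)^{2}}{\inprob}$ (since $\sqrt\inprob\le\sqrt\inprob+\sqrt\outprob\le 2\sqrt\inprob$) and $\big(\sqrt{1-\inprob}-\sqrt{1-\outprob}\big)^{2}\asymp(\inprob-\outprob)^{2}\le\frac{(\inprob-\outprob)^{2}}{\inprob}$ (since $1-\inprob,1-\outprob\in[c,1)$), shows the first square dominates and $1-s\asymp\frac{(\inprob-\outprob)^{2}}{\inprob}$, which is the claim.

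Finally, for part~(c) I would apply $\log(1+x)\le x$ ($x\ge 0$) to each summand of $2t^{*}$: $\log\frac{\inprob}{\outprob}\le\frac{\inprob-\outprob}{\outprob}\le\frac{1}{c_{0}}\cdot\frac{\inprob-\outprob}{\inprob}$ using $\outprob\ge c_{0}\inprob$, and $\log\frac{1-\outprob}{1-\inprob}\le\frac{\inprob-\outprob}{1-\inprob}\le\frac{1}{c}(\inprob-\outprob)\le\frac{1}{c}\cdot\frac{\inprob-\outprob}{\inprob}$ using $1-\inprob\ge c$ and $\inprob\le 1$; summing gives $t^{*}\le\tfrac12\big(\tfrac1{c_{0}}+\tfrac1c\big)\frac{\inprob-\outprob}{\inprob}\lesssim\frac{\inprob-\outprob}{\inprob}$. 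Beyond part~(a), the one subtlety worth flagging is in part~(b): one must keep $s$ bounded away from $0$ before replacing $-\log s$ by $1-s$, and treat $(\sqrt{1-\inprob}+\sqrt{1-\outprob})^{2}$ as $\Theta(1)$ — exactly where the hypothesis $\inprob\le 1-c$ enters.
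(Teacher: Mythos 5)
Your proposal is correct throughout. Parts (a) and (c) follow essentially the paper's route: in (a) the paper computes $\inprob-\tune$ directly and identifies the bracketed quantity as the KL divergence between $\Bern(\inprob)$ and $\Bern(\outprob)$, which is exactly the positivity your rearrangement reduces to (your version phrases it as $D(\Bern(\outprob)\,\|\,\Bern(\inprob))>0$ and symmetrically, an equivalent bookkeeping); in (c) the paper splits into the two cases $\frac{\inprob}{\outprob}\gtrless\frac{1-\outprob}{1-\inprob}$ and bounds $t^{*}$ by the larger logarithm, whereas you bound both summands of $2t^{*}$ via $\log(1+x)\le x$ and add them—same elementary estimates, same use of $\outprob\ge c_{0}\inprob$ and $\inprob\le1-c$. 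The genuine difference is part (b): the paper simply cites \cite[Lemma B.1]{zhang2016minimax}, while you give a self-contained argument via the Hellinger-type identity $1-s=\tfrac12(\sqrt{\inprob}-\sqrt{\outprob})^{2}+\tfrac12(\sqrt{1-\inprob}-\sqrt{1-\outprob})^{2}$ together with $-\log s\asymp1-s$ once $s\in[c,1)$, correctly flagging where $\inprob\le1-c$ is needed to keep $s$ away from $0$ and to treat $(\sqrt{1-\inprob}+\sqrt{1-\outprob})^{2}$ as $\Theta(1)$. Your version buys independence from the external reference at the cost of a few lines; the paper's buys brevity. No gaps.
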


\subsection{Proof of Fact \ref{fact:CBM_estimate} \label{sec:proof_CBM_estimate}}

Recall the shorthands $\inprob\coloneqq\obsprob(1-\flipprob)$ and
$\outprob\coloneqq\obsprob\flipprob$ introduced for Model \ref{mdl:CBM}.
For part (a) of the fact, by definition of $t^{*}$ in Equation (\ref{eq:def_tstar}),
we have 
\[
t^{*}=\frac{1}{2}\log\left(1+\frac{\inprob-\outprob}{\outprob}\right)\overset{(i)}{\le}\frac{\inprob-\outprob}{2\outprob}\overset{(ii)}{=}\frac{1-\flipprob}{2\flipprob}\cdot\frac{\inprob-\outprob}{\inprob},
\]
where step $(i)$ holds since the fact that $1+x\le e^{x}$ for $x\in\real$
implies $\log(1+x)\le x$ for $x>-1$, and step $(ii)$ holds by the
fact that $\outprob=\frac{\flipprob}{1-\flipprob}\inprob$. 

For part (b), recalling the definition of $\snr$ in Equation (\ref{eq:snr}),
we have 
\[
\snr=\frac{\left(\sqrt{\inprob}-\sqrt{\outprob}\right)^{2}\left(\sqrt{\inprob}+\sqrt{\outprob}\right)^{2}}{\left(\sqrt{\inprob}+\sqrt{\outprob}\right)^{2}}=\frac{\left(\inprob-\outprob\right)^{2}}{\inprob+\outprob+2\sqrt{\inprob\outprob}}.
\]
Some algebra shows that $\snr\le\frac{\left(\inprob-\outprob\right)^{2}}{\inprob}$
and $\snr\ge\frac{\left(\inprob-\outprob\right)^{2}}{\inprob+\inprob+2\inprob}=\frac{\left(\inprob-\outprob\right)^{2}}{4\inprob}$.

\subsection{Proof of Fact \ref{fact:SBM_estimate} \label{sec:proof_SBM_estimate}}

For part (a), recalling the definition of $\tune$ in Equation~(\ref{eq:def_tune_param}),
we obtain by direct calculation the identity
\[
\inprob-\tune=\left[\log\frac{\inprob(1-\outprob)}{\outprob(1-\inprob)}\right]^{-1}\left[\inprob\log\frac{\inprob}{\outprob}+(1-\inprob)\log\frac{1-\inprob}{1-\outprob}\right].
\]
The quantity inside the second bracket on the RHS is positive, as
it is the KL divergence between $\Bern(\inprob)$ and $\Bern(\outprob)$
with $\inprob\neq\outprob$. We also have $\log\frac{\inprob(1-\outprob)}{\outprob(1-\inprob)}>0$
since $0<\outprob<\inprob<1$. It follows that $\inprob-\tune>0$
as claimed. A similar argument shows that $\tune-\outprob>0$. 

Part (b) is a partial result of \cite[Lemma B.1]{zhang2016minimax}. 

For part (c), recall the definition $t^{*}:=\frac{1}{2}\left(\log\frac{\inprob}{\outprob}+\log\frac{1-\outprob}{1-\inprob}\right)$
in Eq.~(\ref{eq:def_tstar}). We consider two cases. If $\frac{\inprob}{\outprob}\ge\frac{1-\outprob}{1-\inprob}$,
we have 
\[
t^{*}\le\log\frac{\inprob}{\outprob}\overset{(i)}{\le}\frac{\inprob}{\outprob}-1\overset{(ii)}{\le}\frac{\inprob-\outprob}{c_{0}\inprob},
\]
where step $(i)$ holds since $\log(x)\le x-1,\forall x>0$, and step
$(ii)$ holds by assumption $c_{0}\inprob\le\outprob$.  If $\frac{\inprob}{\outprob}\le\frac{1-\outprob}{1-\inprob}$,
we have
\[
t^{*}\le\log\frac{1-\outprob}{1-\inprob}\overset{}{\le}\frac{1-\outprob}{1-\inprob}-1\overset{(i)}{\le}\frac{\inprob-\outprob}{c}\le\frac{\inprob-\outprob}{c\inprob},
\]
where step $(i)$ holds by the assumption that $\inprob\le1-c$. In
both cases, we have $t^{*}\lesssim\frac{\inprob-\outprob}{\inprob}$
as claimed.

\section{Proof of Theorem \ref{thm:minimax_lower}\label{sec:proof_minimax_lower_bound}}

In this section we prove Theorem \ref{thm:minimax_lower} under Models
\ref{mdl:Z2} and \ref{mdl:CBM}, following a similar strategy as
in the proof of \cite[Theorem 1.1]{zhang2016minimax}. We make use
of the definitions and facts given in Section~\ref{sec:preliminary}.

For simplicity, in the sequel we write $\paramset\equiv\paramset(\num):=\left\{ \pm1\right\} ^{\num}$.
Let $\prior$ be the uniform prior over all the elements in $\paramset$.
Define the global Bayesian risk
\[
\bayrisk_{\prior}\big(\paramset,\LabelHat\big)\coloneqq\frac{1}{\left|\paramset\right|}\sum_{\Label\in\paramset}\E_{\Label}\misrate(\LabelHat,\Label),
\]
and the local Bayesian risk for the first node
\[
\bayrisk_{\prior}\big(\paramset,\labelhat(1)\big)\coloneqq\frac{1}{\left|\paramset\right|}\sum_{\Label\in\paramset}\E_{\Label}\misrate\big(\labelhat(1),\labels(1)\big).
\]
In the above, the quantity $\misrate(\labelhat(1),\labels(1))$ denotes
the loss on the first node, defined as 
\[
\misrate\big(\labelhat(1),\labels(1)\big)\coloneqq\frac{1}{\left|\calS_{\Label}(\LabelHat)\right|}\sum_{\Label'\in\calS_{\Label}(\LabelHat)}\indic\left\{ \labels'(1)\neq\labels(1)\right\} ,
\]
where $\calS_{\Label}(\LabelHat)\coloneqq\left\{ g\LabelHat:g\in\{\pm1\},\frac{1}{\num}\sum_{i\in[\num]}\indic\{g\labelhat_{i}\neq\labels_{i}\}=\misrate(\LabelHat,\Label)\right\} $.
The following lemma shows that these risks are equal.
\begin{lem}
\label{lem:global_to_local}Under Models \ref{mdl:Z2} and \ref{mdl:CBM},
we have 
\[
\inf_{\LabelHat}\bayrisk_{\prior}\big(\paramset,\LabelHat\big)=\inf_{\LabelHat}\bayrisk_{\prior}\big(\paramset,\labelhat(1)\big).
\]
\end{lem}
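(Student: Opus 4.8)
The plan is to establish the two inequalities $\inf_{\LabelHat}\bayrisk_{\prior}(\paramset,\LabelHat)\ge\inf_{\LabelHat}\bayrisk_{\prior}(\paramset,\labelhat(1))$ and its reverse. The reverse inequality is the easy direction: given any estimator $\labelhat(1)$ of the single label $\labels(1)$, one can by symmetry build estimators $\labelhat(i)$ of each $\labels(i)$ with the same local risk (relabeling the nodes), and then assemble $\LabelHat=(\labelhat(1),\dots,\labelhat(\num))$; the global risk $\misrate(\LabelHat,\Label)$ is at most $\frac{1}{\num}\sum_i\indic\{\labelhat_i\ne\labels_i\}$ (since the min over global flips $g$ only helps), so averaging over the uniform prior and using that each coordinate problem is distributionally identical gives $\bayrisk_{\prior}(\paramset,\LabelHat)\le\bayrisk_{\prior}(\paramset,\labelhat(1))$. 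Taking infima yields one direction.

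For the hard direction, I would fix an arbitrary global estimator $\LabelHat$ and lower bound $\bayrisk_{\prior}(\paramset,\LabelHat)$ by the node-averaged local risk. The key identity to exploit is the definition of $\misrate(\Label,\Label')$ as a minimum over the global sign flip $g\in\{\pm1\}$: once the optimal $g$ is selected (this is where the set $\calS_{\Label}(\LabelHat)$ enters, handling ties by averaging), we have $\misrate(\LabelHat,\Label)=\frac{1}{\num}\sum_{i\in[\num]}\misrate(\labelhat(i),\labels(i))$ where $\misrate(\labelhat(i),\labels(i))$ is the averaged indicator of disagreement on node $i$ after the flip. Averaging this over $\Label\sim\prior$ and swapping the sums over $i$ and $\Label$, the contribution of each node $i$ equals $\bayrisk_{\prior}(\paramset,\labelhat(i))$ by the permutation symmetry of $\paramset=\{\pm1\}^{\num}$ and of the model (permuting node indices is a measure-preserving bijection of $\paramset$ that carries the distribution of $\Adj$ to itself, since both Z2 and CBM generate entries i.i.d.\ conditioned on $\labelstar_i\labelstar_j$). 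Hence $\bayrisk_{\prior}(\paramset,\LabelHat)=\frac{1}{\num}\sum_{i\in[\num]}\bayrisk_{\prior}(\paramset,\labelhat(i))\ge\min_i\bayrisk_{\prior}(\paramset,\labelhat(i))\ge\inf_{\labelhat(1)}\bayrisk_{\prior}(\paramset,\labelhat(1))$, and taking the infimum over $\LabelHat$ closes the argument.

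The main obstacle is making the tie-breaking in the global sign flip rigorous, i.e.\ justifying that $\misrate(\LabelHat,\Label)$ can be written \emph{exactly} as the node-averaged quantity $\frac{1}{\num}\sum_i\misrate(\labelhat(i),\labels(i))$ with $\misrate(\labelhat(i),\labels(i))$ defined through the set $\calS_{\Label}(\LabelHat)$ of \emph{all} minimizing flips. When the minimizing $g$ is unique this is immediate; when both $g=+1$ and $g=-1$ achieve the minimum (which forces $\misrate(\LabelHat,\Label)=\frac12$), the averaged definition of the per-node loss over $\calS_{\Label}(\LabelHat)$ is precisely engineered so that the identity still holds on the nose. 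I would spell out this bookkeeping carefully, then note that everything else is symmetry plus linearity of expectation. The model-specific input is only the exchangeability of the coordinates of $\Adj$ under node permutations, which holds for both Z2 and CBM by construction.
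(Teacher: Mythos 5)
Your main (``hard'') direction is essentially correct and is, in substance, the argument behind the result the paper invokes: the exact decomposition $\misrate(\LabelHat,\Label)=\frac{1}{\num}\sum_{i\in[\num]}\misrate(\labelhat(i),\labels(i))$ holds, and your handling of the tie case is right (if both flips are optimal then $\misrate(\LabelHat,\Label)=\frac12$, and the $\calS_{\Label}(\LabelHat)$-averaged per-node loss equals $\frac12$ at every node since exactly one of $\labelhat_i\ne\labels_i$, $-\labelhat_i\ne\labels_i$ holds). Two remarks: the permutation symmetry is not needed where you invoke it (identifying the node-$i$ contribution with $\bayrisk_{\prior}(\paramset,\labelhat(i))$ is just the definition applied at node $i$); it is needed at the final step $\min_i\bayrisk_{\prior}(\paramset,\labelhat(i))\ge\inf_{\LabelHat}\bayrisk_{\prior}(\paramset,\labelhat(1))$, where you must conjugate $\LabelHat$ by the transposition swapping nodes $1$ and $i$ and check that the uniform prior, the permutation-closed $\paramset$, the homogeneous model, and the $\calS$-based loss are all equivariant. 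Note also that the paper itself does not argue this from scratch: it verifies the hypotheses of Lemma 2.1 of \cite{zhang2016minimax} (parameter space closed under permutations, homogeneous model) and observes that that proof extends to the continuous Z2 case, so your self-contained route is genuinely different.

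The reverse (``easy'') direction as written has a genuine gap. In the paper's definition the right-hand infimum ranges over \emph{global} estimators $\LabelHat$, and $\misrate(\labelhat(1),\labels(1))$ is the node-1 loss \emph{after the global alignment} encoded by $\calS_{\Label}(\LabelHat)$; it is not the naive loss $\indic\{\labelhat(1)\ne\labels(1)\}$ of a standalone estimator of $\labels(1)$. Your construction bounds the global risk of the assembled estimator by the average naive per-node loss, which only compares it to the naive node-1 Bayes risk. But under the uniform prior, $\Label$ and $-\Label$ induce the same law of $\Adj$ in both Z2 and CBM, so the posterior of $\labels(1)$ is uniform on $\{\pm1\}$ and the naive node-1 Bayes risk equals $\frac12$ for \emph{every} estimator; your chain therefore only yields the vacuous bound $\inf_{\LabelHat}\bayrisk_{\prior}(\paramset,\LabelHat)\le\frac12$, not $\le\inf_{\LabelHat}\bayrisk_{\prior}(\paramset,\labelhat(1))$ for the aligned loss (indeed, under your naive reading the claimed equality would be false, since the left side can be exponentially small). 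Closing this direction requires an invariance/symmetrization argument, e.g.\ showing the global Bayes risk is (nearly) attained by a permutation-equivariant estimator, for which all node local risks coincide and your decomposition gives global risk equal to the node-1 local risk. Only the ``$\ge$'' direction is used later in the proof of Theorem~\ref{thm:minimax_lower}, but the lemma as stated asserts equality, so this direction cannot simply be waved through.
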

\begin{proof}
The lemma essentially follows from the symmetry/exchangeability property
of Models~\ref{mdl:Z2} (Z2) and \ref{mdl:CBM} (CBM). Rigorous proof
of this intuitive result is however quite technical, as the definition
of clustering error involves a global sign flipping. Fortunately,
most of the work has been done in~\cite{zhang2016minimax}. In particular,
note that the parameter space $\paramset$ is closed under permutation
in the sense that for any label vector $\Label\in\paramset$ and any
permutation $\pi$ on $\left[\num\right]$, the new label vector $\Label'$
defined by $\labels'(i)\coloneqq\labels(\pi^{-1}(i))$ also belongs
to $\paramset$. It can also be seen that both Models~\ref{mdl:Z2}
(Z2) and~\ref{mdl:CBM} (CBM) are homogeneous, i.e., the distribution
of each $\adj_{ij}$ is uniquely determined by the sign of $\labelstar_{i}\labelstar_{j}$.
Consequently, for Model \ref{mdl:CBM} (CBM) this lemma immediately
follows from Lemma 2.1 in \cite{zhang2016minimax}, as its proof applies
without change. For Model \ref{mdl:Z2} (Z2) in which the distribution
of $\Adj$ is continuous, we note that the proof of Lemma 2.1 in \cite{zhang2016minimax}
continues to hold when summations therein are replaced by appropriate
integrations. 
\end{proof}
With the above lemma, it suffices to lower bound the local Bayes risk.
This task can be further reduced to computing the tail probability
of a certain sum of independent copies of the random variable $\var$
defined in Section~\ref{sec:preliminary}. This is done in the following
lemma. 
\begin{lem}
\label{lem:lower_bound_local_risk}Let $\prior$ be the uniform prior
over all elements in $\paramset$. Under Models \ref{mdl:Z2} and
\ref{mdl:CBM}, we have 
\[
\bayrisk_{\prior}\big(\paramset,\labelhat(1)\big)\ge\P\left(\sum_{i\in\left[\num-1\right]}Z_{i}\ge0\right),
\]
where $\left\{ Z_{i}\right\} $ are i.i.d.~copies of $-\var$. 
\end{lem}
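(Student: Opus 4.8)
The plan is to lower bound the local Bayes risk by the Bayes risk of a \emph{single-bit hypothesis testing} problem. First I would reduce to the case $\LabelStar = \onevec$: by Lemma~\ref{lem:global_to_local}, $\inf_{\LabelHat}\bayrisk_{\prior}(\paramset,\LabelHat) = \inf_{\LabelHat}\bayrisk_{\prior}(\paramset,\labelhat(1))$, and by further exploiting the exchangeability of the parameter space one may, without loss of generality, condition on $\labelstar_i = 1$ for all $i \ge 2$ and only the label of node $1$ being unknown with a uniform prior. Under this conditioning, the data relevant to estimating $\labelstar_1$ is the collection $\{\adj_{1j}: j \in [\num-1]\}$ (using node indices $2,\dots,\num$), which are i.i.d.\ copies of $\var$ if $\labelstar_1 = 1$ and i.i.d.\ copies of $-\var$ if $\labelstar_1 = -1$. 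The remaining entries $\adj_{ij}$ with $i,j \ge 2$ carry no information about $\labelstar_1$ and can be discarded.

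Next I would invoke the optimality of the Bayes rule for this reduced problem. With a uniform prior on $\labelstar_1 \in \{\pm1\}$ and symmetric data model (likelihood ratio between the two hypotheses is a monotone function of $\sum_{j} \adj_{1j}$, since $\var$ has a log-concave / monotone-likelihood-ratio structure in all three relevant models), the Bayes-optimal estimator is the majority vote $\labelhat_1 = \sign(\sum_{j=1}^{\num-1}\adj_{1j})$, as in Equation~(\ref{eq:vote_single}). Hence the local Bayes risk is at least the error probability of this test, and by symmetry of the two hypotheses it equals $\P\big(\sum_{j} \adj_{1j} \le 0 \mid \labelstar_1 = 1\big)$ up to the measure-zero tie event, which we can absorb or bound from below cleanly. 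Writing $Z_i := -\adj_{1,i+1} \overset{d}{=} -\var$ under $\labelstar_1 = 1$, this probability is exactly $\P\big(\sum_{i \in [\num-1]} Z_i \ge 0\big)$, giving the claimed bound.

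The care that has to be taken — and what I expect to be the main obstacle — is the bookkeeping around the \emph{global sign flip} in the definition of $\misrate$ and the set $\calS_{\Label}(\LabelHat)$. The loss $\misrate(\labelhat(1),\labels(1))$ is defined by averaging over the (possibly two-element) set of sign-aligned versions of $\LabelHat$, so one must verify that the reduction to single-bit testing is not corrupted by the estimator secretly using the global flip to hedge. This is precisely the technical content handled in \cite{zhang2016minimax}; I would cite the relevant lemma there (already used in Lemma~\ref{lem:global_to_local}) and check that, because node $1$ is just one coordinate among $\num$ and the risk is the averaged indicator, the optimal estimator's behavior on node $1$ decouples — the global flip is determined by the overwhelming majority of the other $\num-1$ coordinates and is asymptotically irrelevant to the node-$1$ decision. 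A secondary, purely technical point is continuity of the Z2 model: the argument above uses densities rather than point masses, but the monotone-likelihood-ratio property and the Neyman--Pearson characterization of the Bayes test go through verbatim with integrals replacing sums, and the tie event $\{\sum_j \adj_{1j} = 0\}$ has probability zero in the Gaussian case.
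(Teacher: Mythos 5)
Your plan follows essentially the same route as the paper's proof: lower bound the local Bayes risk by a two-point problem in which only the label of node $1$ is unknown, identify the Bayes rule under the uniform prior as the likelihood-ratio / majority-vote test (Lemma~\ref{lem:bayes_estimator}), and read off the error as the tail probability $\P\big(\sum_{i\in[\num-1]}Z_i\ge0\big)$.

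The one step I would push back on is your treatment of the global sign flip. The paper needs no asymptotic ``decoupling'' argument: it pairs each $\Label_0\in\paramset$ with $\Label[\Label_0]$ obtained by flipping only node $1$, rewrites the prior average as an average over such pairs, and pulls the infimum inside each pair (Equation~(\ref{eq:decompose_local_risk})). For the resulting binary test the Bayes estimator $\Labeltilde$ agrees with $\Label_0$ on nodes $2,\dots,\num$, so the aligned set $\calS_{\Label}(\Labeltilde)$ is the singleton $\{\Labeltilde\}$ and the local loss reduces, deterministically and for every finite $\num$, to $\indic\{\labeltilde(1)\ne\labels(1)\}$. Your proposed justification that the flip is ``determined by the overwhelming majority of the other coordinates and is asymptotically irrelevant'' is not an acceptable substitute: the lemma is an exact, non-asymptotic inequality, and that heuristic is precisely the kind of claim one would then have to prove; the pairing argument (or the corresponding lemma of \cite{zhang2016minimax}, which you rightly point to) is the clean fix. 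Relatedly, the genie-style reduction (revealing $\labelstar_i$ for $i\ge2$) is legitimate only after the loss has been reduced to the plain node-$1$ indicator, because $\misrate\big(\labelhat(1),\labels(1)\big)$ depends on the whole vector $\LabelHat$ through $\calS_{\Label}(\LabelHat)$. Finally, your caution about ties is well placed: for CBM the voting sum has an atom at zero, so the tie-breaking convention must be fixed consistently with whether one states the bound with $\P(\sum_i Z_i>0)$ or $\P(\sum_i Z_i\ge0)$, whereas for Z2 the tie event is null, as you say.
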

This lemma is analogous to Lemma 5.1 in \cite{zhang2016minimax}.
We provide the proof in Section \ref{sec:proof_lower_bound_local_risk}.

Finally, the lemma below provides an explicit lower bound of the above
tail probability in terms of the SNR measure $\snr$. 
\begin{lem}
\label{lem:local_risk_minimax} Let $\left\{ Z_{i}\right\} $ be
i.i.d.~copies of $-\var$. For Model~\ref{mdl:Z2} and Model~\ref{mdl:CBM}
with  $\snr=o(1)$, if $\num\snr\to\infty$, then there exists $\xi=o(1)$
such that 
\[
\P\left(\frac{1}{\num-1}\sum_{i\in\left[\num-1\right]}Z_{i}\ge0\right)\ge\exp\left[-\left(1+\xi\right)(\num-1)\snr\right].
\]
\end{lem}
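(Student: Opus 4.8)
The plan is to establish a lower bound on the tail probability $\P\big(\frac{1}{\num-1}\sum_{i}Z_i \ge 0\big)$ by a direct, model-specific change-of-measure (tilting) argument, exploiting the explicit distributions of $\var$ available for Z2 and CBM. First I would recall the elementary fact that for i.i.d.\ copies $Z_1,\dots,Z_{\num-1}$ of $-\var$, a sharp lower bound on $\P(\sum_i Z_i \ge 0)$ can be obtained by passing to the exponentially tilted distribution with parameter $t^{*}$ (the minimizer of $t \mapsto \E e^{-t\var}$, cf.\ Equation~(\ref{eq:def_tstar})). Under this tilted law, $\E[Z_i]$ is shifted to (approximately) zero, so $\P(\sum_i Z_i \ge 0)$ becomes an $\Omega(1)$ or at worst polynomially small quantity, while the Radon--Nikodym derivative contributes the dominant factor $\big(\E e^{-t^{*}\var}\big)^{\num-1} \approx e^{-(\num-1)\snr}$, using the identity $\E e^{-t^{*}\var} \approx e^{-\snr}$ flagged in Section~\ref{sec:preliminary} (Facts~\ref{fact:Z2_magic_identity} and \ref{fact:CBM_magic_identity}). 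The sub-exponential correction terms then get absorbed into the $\xi = o(1)$ slack in the exponent, which is why the lemma only claims a $(1+\xi)$ constant.

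Concretely, I would handle the two models separately since their distributions differ. For Z2, $-\var \sim N(-1,\std^{2})$, so $\sum_i Z_i \sim N(-(\num-1), (\num-1)\std^{2})$, and $\P(\sum_i Z_i \ge 0) = \P\big(N(0,1) \ge \sqrt{\num-1}/\std\big)$; using $\snr = 1/(2\std^2)$ this is exactly $\P(N(0,1) \ge \sqrt{2(\num-1)\snr})$, and the standard Gaussian tail bound $\P(N(0,1)\ge x) \ge \frac{1}{\sqrt{2\pi}}\cdot\frac{x}{x^2+1}e^{-x^2/2}$ gives a lower bound of the form $c\,(\num\snr)^{-1/2} e^{-(\num-1)\snr}$, and since $\num\snr \to \infty$ this polynomial prefactor is $e^{-o(1)\cdot\num\snr}$, yielding the claim. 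For CBM, $-\var$ is a three-point random variable on $\{-1,0,1\}$, so $\sum_i Z_i$ is a (shifted) difference of two independent binomials; here I would apply the exponential-tilting / Chernoff-type lower bound for lattice random variables. The clean route is a Bahadur--Rao-style sharp asymptotic, but for a mere lower bound it suffices to note that the tilted measure with parameter $t^{*}$ makes the mean nonnegative, and a local central limit or a crude anti-concentration estimate shows the tilted event has probability at least $c/\sqrt{\num}$; combined with the likelihood-ratio factor $(\E e^{-t^{*}\var})^{\num-1}$, and the estimate $\E e^{-t^{*}\var} = (1-\snr) \ge e^{-(1+o(1))\snr}$ valid when $\snr = o(1)$, this gives $\P(\sum_i Z_i \ge 0) \ge c\,\num^{-1/2} e^{-(1+o(1))(\num-1)\snr}$, and again the prefactor disappears into the exponent because $\num\snr \to \infty$.

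The main obstacle I anticipate is the bookkeeping on the second-order term in the CBM case: one must verify that the tilted distribution genuinely has the right mean and variance, that the resulting anti-concentration lower bound is only polynomially (not exponentially) small, and that the gap between $-\log\E e^{-t^{*}\var}$ and $\snr$ is $o(\snr)$ uniformly over the allowed parameter range with $\snr = o(1)$ and $\num\snr \to \infty$. The Z2 case is essentially a one-line Gaussian tail estimate and should cause no trouble. A cleaner alternative for CBM, which I might adopt to avoid a delicate LCLT, is to lower-bound $\P(\sum_i Z_i \ge 0)$ by $\P(\sum_i Z_i = 0)$ or by restricting to a single ``typical'' atom of the binomial, then estimating that single binomial probability directly via Stirling; this trades a slightly worse polynomial prefactor for a much simpler argument, which is harmless since all polynomial factors are swept into $\xi = o(1)$.
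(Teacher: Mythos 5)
Your proposal is correct, and for CBM it is essentially the paper's own argument: tilt by $t^{*}$, use the identity $\E e^{-t^{*}\var}=1-\snr$ together with $\log(1-\snr)=-(1+O(\snr))\snr$ (this is exactly where the hypothesis $\snr=o(1)$ enters), and show the tilted sum lands near zero with non-negligible probability. The paper makes precise the step you flag as the main obstacle by restricting to the window $\{S_{n'}\in[0,n'\delta]\}$ with $\delta=V^{1/4}\snr^{1/2}(t^{*})^{-1/2}$, so the factor $e^{-t^{*}S_{n'}}$ costs only $e^{-n't^{*}\delta}=e^{-o(1)\,n'\snr}$, and then applies Chebyshev to the tilted law, which is exactly symmetric on $\{-1,0,1\}$ and hence mean zero, to get an $\Omega(1)$ window probability; your alternative of restricting to the atom $\{S_{n'}=0\}$ plus Stirling also works and only loses a polynomial prefactor, harmless since $\num\snr\to\infty$. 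One caution on your one-sided phrasing: after tilting, $\P(S_{n'}\ge0)=M(t^{*})^{n'}\,\E_{q}\!\left[e^{-t^{*}S_{n'}}\indic\{S_{n'}\ge0\}\right]$, so anti-concentration for $\{S_{n'}\ge0\}$ alone does not suffice — you need the upper truncation (window or atom) to keep $e^{-t^{*}S_{n'}}$ from being exponentially small; your atom-based variant and acknowledged bookkeeping cover this, so it is presentational rather than a gap. Where you genuinely differ is Z2: the paper runs the same tilting template (the tilted density is $N(0,\std^{2})$, then Chebyshev), whereas you note $\sum_{i}Z_{i}$ is exactly Gaussian and invoke $\P(N(0,1)\ge x)\ge\tfrac{1}{\sqrt{2\pi}}\tfrac{x}{x^{2}+1}e^{-x^{2}/2}$ at $x=\sqrt{2(\num-1)\snr}$, absorbing the polynomial prefactor into $\xi$ via $\num\snr\to\infty$; this is shorter and exact, and what the paper's uniform treatment buys instead is a single argument covering both models (and mirroring the upper-bound analysis), at the cost of the extra window/Chebyshev machinery in the Gaussian case.
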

This lemma is analogous to Lemma 5.2 in \cite{zhang2016minimax}.
We provide the proof in Section~\ref{sec:proof_Z2_local_risk_minimax}
for Model~\ref{mdl:Z2} (Z2) and in Section~\ref{sec:proof_CBM_local_risk_minimax}
for Model~\ref{mdl:CBM} (CBM).\\

We are now ready to prove Theorem~\ref{thm:minimax_lower}. Note
that 
\[
\inf_{\LabelHat}\sup_{\Label\in\paramset}\E_{\Label}\misrate(\LabelHat,\Label)\ge\inf_{\LabelHat}\bayrisk_{\prior}(\paramset,\LabelHat),
\]
since the Bayes risk lower bounds the minimax risk. To complete the
proof, we continue the above inequality by successively invoking Lemmas~\ref{lem:global_to_local},
\ref{lem:lower_bound_local_risk} and~\ref{lem:local_risk_minimax}.

\subsection{Proof of Lemma \ref{lem:lower_bound_local_risk}\label{sec:proof_lower_bound_local_risk}}

Recall that $\bayrisk_{\prior}(\paramset,\labelhat(1))$ is defined
as
\[
\bayrisk_{\prior}\big(\paramset,\labelhat(1)\big)\coloneqq\frac{1}{\left|\paramset\right|}\sum_{\Label\in\paramset}\E_{\Label}\misrate\big(\labels(1),\labelhat(1)\big).
\]
For each $\Label_{0}\in\paramset$, we generate a new assignment $\Label[\Label_{0}]$
based on $\Label_{0}$ by setting $\labels[\labels_{0}](1)\coloneqq-\labels_{0}(1)$
and $\labels[\labels_{0}](i)\coloneqq\labels_{0}(i)$ for all $i\in\left[\num\right]\setminus\left\{ 1\right\} $.
It can be seen that $\Label[\Label_{0}]\in\paramset$ and the Hamming
distance between $\Label_{0}$ and $\Label[\Label_{0}]$ is 1. In
addition, for any $\Label_{1},\Label_{2}\in\paramset$ with $\Label_{1}\ne\Label_{2}$,
we have $\Label[\Label_{1}]\ne\Label[\Label_{2}]$. This bijection
implies that $\left\{ \Label[\Label_{0}]:\Label_{0}\in\paramset\right\} =\paramset$.
Consequently, continuing from the last displayed equation we obtain
\[
\bayrisk_{\prior}(\paramset,\labelhat(1))=\frac{1}{\left|\paramset\right|}\sum_{\Label_{0}\in\paramset}\frac{1}{2}\Big(\E_{\Label_{0}}\misrate\big(\labelhat(1),\labels_{0}(1)\big)+\E_{\Label[\Label_{0}]}\misrate\big(\labelhat(1),\labels[\labels_{0}](1)\big)\Big),
\]
whence
\begin{align}
\inf_{\LabelHat}\bayrisk_{\prior}(\paramset,\labelhat(1)) & \ge\frac{1}{\left|\paramset\right|}\sum_{\Label_{0}\in\paramset}\frac{1}{2}\inf_{\LabelHat}\Big(\E_{\Label_{0}}\misrate\big(\labelhat(1),\labels_{0}(1)\big)+\E_{\Label[\Label_{0}]}\misrate\big(\labelhat(1),\labels[\labels_{0}](1)\big)\Big).\label{eq:decompose_local_risk}
\end{align}

We proceed to compute the infimum above for a given $\Label_{0}\in\paramset$.
Let $\Labeltilde$ be the Bayes estimator that attains the infimum.
Since $\Label_{0}$ and $\Label[\Label_{0}]$ only differ at the first
node, we must have $\labeltilde(i)=\labels[\labels_{0}](i)=\labels_{0}(i)$
for all $i\in\left[\num\right]\backslash\left\{ 1\right\} $, and
either $\labeltilde(1)=\labels_{0}(1)$ or $\labeltilde(1)=\labels[\labels_{0}](1)$.
Now the problem is reduced to a test between two distributions $\P_{\Label_{0}}$
and $\P_{\Label\left[\Label_{0}\right]}$.  Since the prior $\prior$
is uniform, $\labeltilde(1)$ is given by the likelihood ratio test
$\frac{\P_{\Label_{0}}(\Adj)}{\P_{\Label\left[\Label_{0}\right]}(\Adj)}\gtrless1$.
The following lemma gives the explicit form of this test. Here we
let $J_{0}\coloneqq\left\{ u\in\left[\num\right]\backslash\left\{ 1\right\} :\labels{}_{0}(u)=\labels{}_{0}(1)\right\} $
and $J_{1}\coloneqq\left\{ u\in\left[\num\right]:\labels{}_{0}(u)=\labels[\labels{}_{0}](1)\right\} $.\footnote{In Lemma \ref{lem:bayes_estimator} and its proof, we adopt the convention
that $\sum_{u\in J}f(u)=0$ and $\prod_{u\in J}f(u)=1$ if $J=\emptyset$. }
\begin{lem}
\label{lem:bayes_estimator}Let $\Label_{0}$ and $\Label[\Label_{0}]$
be defined as above. Under Models~\ref{mdl:Z2} and~\ref{mdl:CBM},
we have that 
\[
\labeltilde(1)=\begin{cases}
\labels_{0}(1), & \text{if }\sum_{u\in J_{0}}\adj_{1u}\ge\sum_{u\in J_{1}}\adj_{1u},\\
\labels[\labels{}_{0}](1), & \text{otherwise.}
\end{cases}
\]
\end{lem}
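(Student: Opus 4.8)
The plan is to identify $\labeltilde(1)$ with the outcome of the likelihood ratio test between $\P_{\Label_{0}}$ and $\P_{\Label[\Label_{0}]}$ and then simplify that ratio using the fact that the two parameter vectors differ \emph{only} in the first coordinate. First I would observe that since all entries $\adj_{ij}$ are generated independently, and since $\labels_{0}(i)=\labels[\labels_{0}](i)$ for every $i\neq 1$, every factor in the likelihood $\P_{\Label}(\Adj)=\prod_{i\le j}p\big(\adj_{ij}\mid \sign(\labels_i\labels_j)\big)$ that does \emph{not} involve node $1$ is identical under $\Label_0$ and $\Label[\Label_0]$, and hence cancels in the ratio. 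Likewise the diagonal factor $p(\adj_{11}\mid\cdot)$ is unaffected since $\labels_1\labels_1=1$ regardless. So the likelihood ratio reduces to a product over $u\in[\num]\setminus\{1\}$ of $p(\adj_{1u}\mid\sign(\labels_0(1)\labels_0(u)))\big/p(\adj_{1u}\mid\sign(\labels[\labels_0](1)\labels_0(u)))=p(\adj_{1u}\mid\sign(\labels_0(1)\labels_0(u)))\big/p(\adj_{1u}\mid-\sign(\labels_0(1)\labels_0(u)))$, using $\labels[\labels_0](1)=-\labels_0(1)$.

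Next I would split the index set $[\num]\setminus\{1\}$ according to whether $\labels_0(u)$ agrees with $\labels_0(1)$ or with $\labels[\labels_0](1)=-\labels_0(1)$; these are exactly the sets $J_0$ and $J_1\setminus\{1\}$ in the notation of the lemma (note $1\in J_1$ trivially and contributes nothing). On $J_0$ the numerator density sees a ``same-cluster'' sign and the denominator an ``opposite'' sign; on $J_1$ the roles are reversed. Taking logarithms, $\log\frac{\P_{\Label_0}(\Adj)}{\P_{\Label[\Label_0]}(\Adj)}=\sum_{u\in J_0}\log\frac{p(\adj_{1u}\mid +)}{p(\adj_{1u}\mid -)}-\sum_{u\in J_1}\log\frac{p(\adj_{1u}\mid +)}{p(\adj_{1u}\mid -)}$. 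For both Z2 and CBM the log-likelihood-ratio statistic $\log\frac{p(a\mid+)}{p(a\mid-)}$ is an increasing \emph{linear} function of $a$ with positive slope: for Z2 with $N(\pm1,\std^2)$ it equals $\tfrac{2}{\std^2}a$, and for CBM it equals $2t^{*}a$ with $t^{*}=\tfrac12\log\tfrac{1-\flipprob}{\flipprob}>0$ (and is $0$ on the event $\adj_{1u}=0$, which is harmless). Substituting this and dividing by the common positive constant, the test $\log\frac{\P_{\Label_0}}{\P_{\Label[\Label_0]}}\gtrless 0$ becomes exactly $\sum_{u\in J_0}\adj_{1u}\gtrless\sum_{u\in J_1}\adj_{1u}$, which is the claimed form, with ties ($=$) assigned to $\labels_0(1)$ by the stated tie-breaking convention.

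The main obstacle — more of a bookkeeping nuisance than a conceptual one — is handling the two models uniformly while being careful about (i) the convention for empty $J_0$ or $J_1$ and the diagonal term, and (ii) the fact that in CBM the log-likelihood ratio is only \emph{weakly} monotone in $a$ (flat on $\{a=0\}$), so one should phrase the argument via $\log\frac{p(a\mid+)}{p(a\mid-)}=2t^{*}a$ holding pointwise for $a\in\{0,\pm1\}$ rather than invoking strict monotonicity; similarly for Z2 one must note the ratio is well-defined since the Gaussian densities are everywhere positive. One also has to check that the Bayes estimator under $0$–$1$ loss on the first node, with a uniform prior on the two candidates $\{\Label_0,\Label[\Label_0]\}$, is precisely the likelihood ratio test with the stated threshold $1$ — this is the standard two-hypothesis Bayes rule, already invoked in the text preceding the lemma. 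Assembling these pieces yields the lemma.
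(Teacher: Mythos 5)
Your proposal is correct and follows essentially the same route as the paper: cancel all likelihood factors not involving node $1$, reduce the ratio $\P_{\Label_{0}}(\Adj)/\P_{\Label[\Label_{0}]}(\Adj)\gtrless1$ to the comparison $\sum_{u\in J_{0}}\adj_{1u}\gtrless\sum_{u\in J_{1}}\adj_{1u}$ (your log-linear form $2a/\std^{2}$, resp.\ $2t^{*}a$, is just a cleaner way of doing the paper's direct algebra), and invoke Bayes optimality of the likelihood ratio test under the uniform prior. One trivial correction: node $1$ is in fact not an element of $J_{1}$, since $\labels_{0}(1)\neq\labels[\labels_{0}](1)=-\labels_{0}(1)$, so $J_{1}\setminus\{1\}=J_{1}$ and your parenthetical aside is moot rather than needed.
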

The lemma follows from a routine calculation of the likelihood. We
give proof in Section~\ref{sec:proof_Z2_bayes_estimator} for Model
\ref{mdl:Z2} (Z2) and in Section~\ref{sec:proof_CBM_bayes_estimator}
for Model \ref{mdl:CBM} (CBM). 

From Lemma \ref{lem:bayes_estimator} we have
\begin{align*}
\E_{\Label_{0}}\misrate\big(\labeltilde(1),\labels_{0}(1)\big) & =\P_{\Label_{0}}\bigg(\sum_{u\in J_{0}}\adj_{1u}<\sum_{u\in J_{1}}\adj_{1u}\bigg),\qquad\text{and}\\
\E_{\Label[\Label_{0}]}\misrate\big(\labeltilde(1),\labels[\labels{}_{0}](1)\big) & =\P_{\Label\left[\Label_{0}\right]}\bigg(\sum_{u\in J_{0}}\adj_{1u}\ge\sum_{u\in J_{1}}\adj_{1u}\bigg).
\end{align*}
Recalling the distribution of $\left\{ \adj_{1u}\right\} $, the definition
of $\var$ and that $Z_{i}\overset{\text{i.i.d.}}{\sim}-\var$, we
see that both probabilities above equal $\P\big(\sum_{i\in\left[\num-1\right]}Z_{i}\ge0\big)$.
Combining with the bound~(\ref{eq:decompose_local_risk}), we obtain
the desired inequality $\inf_{\LabelHat}\bayrisk_{\prior}(\paramset,\labelhat(1))\ge\P\big(\sum_{i\in\left[\num-1\right]}Z_{i}\ge0\big).$

\subsubsection{Proof of Lemma \ref{lem:bayes_estimator} for Model \ref{mdl:Z2}
(Z2) \label{sec:proof_Z2_bayes_estimator}}

Since $\Label_{0}$ and $\Label[\Label_{0}]$ only differ at the first
node, the likelihood ratio $\frac{\P_{\Label_{0}}(\Adj)}{\P_{\Label\left[\Label_{0}\right]}(\Adj)}$
only depends on the first row and column of $\Adj$. In particular,
recalling that $\{A_{1u}\}$ are Gaussian under Model \ref{mdl:Z2},
we have
\begin{align*}
\frac{\P_{\Label_{0}}(\Adj)}{\P_{\Label\left[\Label_{0}\right]}(\Adj)} & =\frac{\prod_{u\in J_{0}}\exp\left[-\left(\adj_{1u}-1\right)^{2}/(2\std^{2})\right]\times\prod_{u\in J_{1}}\exp\left[-\left(\adj_{1u}+1\right)^{2}/(2\std^{2})\right]}{\prod_{u\in J_{0}}\exp\left[-\left(\adj_{1u}+1\right)^{2}/(2\std^{2})\right]\times\prod_{u\in J_{1}}\exp\left[-\left(\adj_{1u}-1\right)^{2}/(2\std^{2})\right]}\\
 & =\frac{\exp\left[-\left(\sum_{u\in J_{0}}\left(\adj_{1u}-1\right)^{2}+\sum_{u\in J_{1}}\left(\adj_{1u}+1\right)^{2}\right)/(2\std^{2})\right]}{\exp\left[-\left(\sum_{u\in J_{0}}\left(\adj_{1u}+1\right)^{2}+\sum_{u\in J_{1}}\left(\adj_{1u}-1\right)^{2}\right)/(2\std^{2})\right]}.
\end{align*}
Some algebra shows that 
\[
\frac{\P_{\Label_{0}}(\Adj)}{\P_{\Label\left[\Label_{0}\right]}(\Adj)}\gtrless1\quad\Longleftrightarrow\quad\sum_{u\in J_{0}}\adj_{1u}\gtrless\sum_{u\in J_{1}}\adj_{1u}.
\]
The result follows from the fact that the likelihood ratio test is
Bayes-optimal for binary hypotheses under a uniform prior. 

\subsubsection{Proof of Lemma \ref{lem:bayes_estimator} for Model \ref{mdl:CBM}
(CBM) \label{sec:proof_CBM_bayes_estimator}}

Similarly to the previous section, we recall the distribution of $\{A_{1u}\}$
under Model \ref{mdl:CBM} to obtain
\begin{align*}
\frac{\P_{\Label_{0}}(\Adj)}{\P_{\Label\left[\Label_{0}\right]}(\Adj)} & =\frac{\prod_{u\in J_{0}}[\obsprob(1-\flipprob)]^{\indic\left\{ \adj_{1u}=1\right\} }[\obsprob\flipprob]^{\indic\left\{ \adj_{1u}=-1\right\} }\times\prod_{u\in J_{1}}[\obsprob(1-\flipprob)]^{\indic\left\{ \adj_{1u}=-1\right\} }[\obsprob\flipprob]^{\indic\left\{ \adj_{1u}=1\right\} }}{\prod_{u\in J_{0}}[\obsprob(1-\flipprob)]^{\indic\left\{ \adj_{1u}=-1\right\} }[\obsprob\flipprob]^{\indic\left\{ \adj_{1u}=1\right\} }\times\prod_{u\in J_{1}}[\obsprob(1-\flipprob)]^{\indic\left\{ \adj_{1u}=1\right\} }[\obsprob\flipprob]^{\indic\left\{ \adj_{1u}=-1\right\} }}.
\end{align*}
Since $\flipprob\in(0,\frac{1}{2})$, some algebra shows that 
\[
\frac{\P_{\Label_{0}}(\Adj)}{\P_{\Label\left[\Label_{0}\right]}(\Adj)}\gtrless1\quad\Longleftrightarrow\quad\sum_{u\in J_{0}}\adj_{1u}\gtrless\sum_{u\in J_{1}}\adj_{1u}.
\]
The result follows from the fact that the likelihood ratio test is
Bayes-optimal for binary hypotheses under a uniform prior.

\subsection{Proof of Lemma \ref{lem:local_risk_minimax} for Model \ref{mdl:Z2}
(Z2) \label{sec:proof_Z2_local_risk_minimax}}

Let $n'\coloneqq\num-1$, $p(z)$ be the pdf of $Z_{1}$, and $M(t)$
be the moment generating function of $Z_{1}$. Since $Z_{1}\sim-\var\sim N(-1,\std^{2})$,
we can compute $M(t)=\exp(-t+\frac{1}{2}t^{2}\std^{2}).$ Recalling
$t^{*}=\frac{1}{\std^{2}}$ as defined in Equation (\ref{eq:def_tstar})
and the definition of $\snr$ in Equation (\ref{eq:snr}), we obtain
\begin{align*}
M\left(t^{*}\right) & =\exp\left(-\frac{1}{2\std^{2}}\right)=\exp\left(-\snr\right).
\end{align*}
Let $\delta\coloneqq\left(2\num\snr\right)^{-\frac{1}{4}}$, $S_{n'}\coloneqq\sum_{i\in\left[n'\right]}Z_{i}$
and $S_{n'}(\z)\coloneqq\sum_{i\in\left[n'\right]}z_{i}$. We have
\begin{align*}
\P\left(S_{n'}\ge0\right) & \ge\int_{\left\{ \z:S_{n'}(\z)\in[0,n'\delta]\right\} }\prod_{i\in\left[n'\right]}p(z_{i})\mathrm{d}\z\\
 & \ge\frac{\left(M\left(t^{*}\right)\right)^{n'}}{\exp\left(n't^{*}\delta\right)}\int_{\left\{ \z:S_{n'}(\z)\in[0,n'\delta]\right\} }\prod_{i\in\left[n'\right]}\frac{\exp\left(t^{*}z_{i}\right)p(z_{i})}{M\left(t^{*}\right)}\mathrm{d}\z,
\end{align*}
where the last step holds since $\exp\left(n't^{*}\delta\right)\ge\exp\left(t^{*}\sum_{i\in\left[n'\right]}z_{i}\right)=\prod_{i\in\left[n'\right]}\exp\left(t^{*}z_{i}\right)$
given that $\sum_{i\in\left[n'\right]}z_{i}=S_{n'}(\z)\le n'\delta$.
Let $q(w)\coloneqq\frac{\exp\left(t^{*}w\right)p(w)}{M\left(t^{*}\right)}$
and we have 
\[
\P\left(S_{n'}\ge0\right)\ge\exp\left(-n'\snr\right)\exp\left(-n't^{*}\delta\right)\int_{\left\{ \z:S_{n'}(\z)\in[0,n'\delta]\right\} }\prod_{i\in\left[n'\right]}q(z_{i})\mathrm{d}\z.
\]
Note that $q(w)$ is a pdf since $\int_{w}q(w)\mathrm{d}w=1$ and
$q(w)\ge0$ for any $w$. Let $W_{1},W_{2},\ldots,W_{n'}$ be i.i.d.$\ $random
variables with pdf $q(w)$. We have 
\begin{equation}
\P\left(S_{n'}\ge0\right)\ge\exp\left(-n't^{*}\delta\right)\P\bigg(\frac{1}{n'}\sum_{i\in\left[n'\right]}W_{i}\in[0,\delta]\bigg)\exp\left(-n'\snr\right)\eqqcolon Q_{1}Q_{2}\exp\left(-n'\snr\right).\label{eq:Z2_P(S_n' >=00003D 0) lower bound}
\end{equation}

\subsubsection{Controlling $Q_{1}$}

It can be seen that $t^{*}\delta=2\snr\cdot\left(2\num\snr\right)^{-\frac{1}{4}}$
by the definitions of $t^{*}$ and $\delta$. Therefore, for some
constant $C'>0$ we have 
\[
Q_{1}\ge\exp\left[-C'\left(\frac{1}{\num\snr}\right)^{\frac{1}{4}}n'\snr\right].
\]

\subsubsection{Controlling $Q_{2}$}

Recall that $p(w)$ is the pdf for $N(-1,\std^{2}$). A closer look
at $q(w)$ yields 
\[
q(w)=\exp\left(\frac{w}{\std^{2}}\right)\exp\left(\frac{1}{2\std^{2}}\right)\frac{1}{\sqrt{2\pi\std^{2}}}\exp\left(-\frac{\left(w+1\right)^{2}}{2\std^{2}}\right)=\frac{1}{\sqrt{2\pi\std^{2}}}\exp\left(-\frac{w^{2}}{2\std^{2}}\right).
\]
Therefore, $q(w)$ is the pdf for $N(0,\std^{2})$. Define $V\coloneqq\Var\left(\frac{1}{n'}\sum_{i\in\left[n'\right]}W_{i}\right)$
and we have $V=\frac{1}{n'}\Var\left(W_{1}\right)=\frac{\std^{2}}{n'}=\frac{1}{2n'\snr}$.

Recall that $\delta\coloneqq\left(2\num\snr\right)^{-\frac{1}{4}}$
and $\num'\coloneqq\num-1$. Using Chebyshev's inequality we have
\[
\P\left(\left|\frac{1}{n'}\sum_{i\in\left[n'\right]}W_{i}\right|>\delta\right)\le\frac{V}{\delta^{2}}\le\frac{C}{\sqrt{\num\snr}},
\]
for some constant $C>0$, where the second step holds by $\num\asymp\num'$.
Therefore, there exist some constants $\consts>0$ and $c'\in(0,1)$
depending only on $\consts$ such that 
\[
Q_{2}=\P\left(\frac{1}{n'}\sum_{i\in\left[n'\right]}W_{i}\in[0,\delta]\right)=\frac{1}{2}\left(1-\P\left(\left|\frac{1}{n'}\sum_{i\in\left[n'\right]}W_{i}\right|>\delta\right)\right)\ge c'
\]
when $\num\snr\ge\consts$ (implied by our assumption $\num\snr\to\infty$).

\subsubsection{Putting together}

Returning to Equation (\ref{eq:Z2_P(S_n' >=00003D 0) lower bound}),
we have
\begin{align*}
\P\left(S_{n'}\ge0\right) & \ge c'\cdot\exp\left[-C'\left(\frac{1}{\num\snr}\right)^{\frac{1}{4}}n'\snr\right]\cdot\exp\left[-\num'\snr\right]\\
 & =\exp\left[-\left(1+C'\left(\frac{1}{\num\snr}\right)^{\frac{1}{4}}+\frac{1}{\num'\snr}\log\frac{1}{c'}\right)\num'\snr\right].
\end{align*}
The desired inequality follows by taking $\xi\coloneqq C'\left(\frac{1}{\num\snr}\right)^{\frac{1}{4}}+\frac{1}{\num'\snr}\log\frac{1}{c'}$
and noting that $\xi=o(1)$ under our assumption $\num\snr\to\infty$.

\subsection{Proof of Lemma \ref{lem:local_risk_minimax} for Model \ref{mdl:CBM}
(CBM) \label{sec:proof_CBM_local_risk_minimax}}

Let $n'\coloneqq\num-1$, $p(z)$ be the probability mass function
of $Z_{1}$, and $M(t)$ be the moment generating function of $Z_{1}$.
Recall that $\inprob\coloneqq\obsprob(1-\flipprob)$ and $\outprob\coloneqq\obsprob\flipprob$.
Since $Z_{1}\sim-\var$, we can compute $M(t)=(1-\obsprob)+\inprob e^{-t}+\outprob e^{t}.$
Noting that $\obsprob=\inprob+\outprob$ and recalling $t^{*}=\frac{1}{2}\log\frac{\inprob}{\outprob}$
as defined in Equation (\ref{eq:def_tstar}), we obtain 
\[
M\left(t^{*}\right)=(1-\obsprob)+2\sqrt{\inprob\outprob}=1-\snr.
\]
Let $\delta\coloneqq V^{\frac{1}{4}}\snr^{\frac{1}{2}}\left(t^{*}\right)^{-\frac{1}{2}}$,
$S_{n'}\coloneqq\sum_{i\in\left[n'\right]}Z_{i}$ and $S_{n'}(\z)\coloneqq\sum_{i\in\left[n'\right]}z_{i}$.
We have 
\begin{align*}
\P\left(S_{n'}\ge0\right) & \ge\sum_{\left\{ \z:S_{n'}(\z)\in[0,n'\delta]\right\} }\prod_{i\in\left[n'\right]}p(z_{i})\\
 & \ge\frac{\left(M\left(t^{*}\right)\right)^{n'}}{\exp\left(n't^{*}\delta\right)}\sum_{\left\{ \z:S_{n'}(\z)\in[0,n'\delta]\right\} }\prod_{i\in\left[n'\right]}\frac{\exp\left(t^{*}z_{i}\right)p(z_{i})}{M\left(t^{*}\right)},
\end{align*}
where the last step holds since $\exp\left(n't^{*}\delta\right)\ge\exp\left(t^{*}\sum_{i\in\left[n'\right]}z_{i}\right)=\prod_{i\in\left[n'\right]}\exp\left(t^{*}z_{i}\right)$
given that $\sum_{i\in\left[n'\right]}z_{i}=S_{n'}(\z)\le n'\delta$.
Let $q(w)\coloneqq\frac{\exp\left(t^{*}w\right)p(w)}{M\left(t^{*}\right)}$
for $w\in\{-1,0,1\}$ and we have 
\[
\P\left(S_{n'}\ge0\right)\ge\exp\left(n'\log(1-\snr)\right)\exp\left(-n't^{*}\delta\right)\sum_{\left\{ \z:S_{n'}(\z)\in[0,n'\delta]\right\} }\prod_{i\in\left[n'\right]}q(z_{i}).
\]
Noting that $q(w)$ is a pmf,  we let $W_{1},W_{2},\ldots,W_{n'}$
be i.i.d.$\ $random variables with pmf $q(w)$. We have 
\begin{equation}
\P\left(S_{n'}\ge0\right)\ge\exp\left(-n't^{*}\delta\right)\P\left(\frac{1}{n'}\sum_{i\in\left[n'\right]}W_{i}\in[0,\delta]\right)\exp\left(n'\log(1-\snr)\right)\eqqcolon Q_{1}Q_{2}Q_{3}.\label{eq:CBM_P(S_n' >=00003D 0) lower bound}
\end{equation}

\subsubsection{Controlling $Q_{2}$}

A closer look at $q(w)$ yields 
\[
q(w)=\begin{cases}
\frac{1}{M\left(t^{*}\right)}\sqrt{\inprob\outprob}, & \text{if }w=1\text{ or }w=-1,\\
\frac{1}{M\left(t^{*}\right)}(1-\obsprob), & \text{if }w=0,
\end{cases}
\]
whence $\Var(W_{1})=\frac{2}{M\left(t^{*}\right)}\sqrt{\inprob\outprob}.$
Define $V\coloneqq\Var\left(\frac{1}{n'}\sum_{i\in\left[n'\right]}W_{i}\right)$
and we have $V=\frac{1}{n'}\Var\left(W_{1}\right)=\frac{2\sqrt{\inprob\outprob}}{n'M\left(t^{*}\right)}$.
We need the following estimates.
\begin{lem}
\label{fact:CBM_V_s_tstar}If $\flipprob\in(0,\frac{1}{2})$ is a
constant and $0<\outprob<\inprob\le1-c$ for some constant $c\in(0,1)$,
then there exist constants $C,C_{1}>0$ such that 
\[
V\le\frac{4\inprob}{c\num},\qquad\qquad\frac{V\left(t^{*}\right)^{2}}{\snr^{2}}\le\frac{C}{\num\snr},\qquad\qquad t^{*}\sqrt{V}\le C_{1}\sqrt{\frac{\snr}{\num}}.
\]
\end{lem}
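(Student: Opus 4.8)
The plan is to establish the three bounds in the order listed, deriving the third as a one-line consequence of the second, and using only the two closed forms recorded just above the lemma, namely $V=\frac{2\sqrt{\inprob\outprob}}{n'M(t^{*})}$ and $M(t^{*})=1-\snr$, together with Facts~\ref{fact:CBM_tstar_lesssim_(p-q)/p} and~\ref{fact:CBM_renyi_equivalence}. The one genuinely new ingredient is a uniform lower bound on $M(t^{*})$, so I would prove that first.

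\emph{Lower bound on $M(t^{*})$ and the first estimate.} Since $0<\outprob<\inprob$ we have $\inprob-\outprob<\inprob$, so the upper bound in Fact~\ref{fact:CBM_renyi_equivalence} gives $\snr\le\frac{(\inprob-\outprob)^{2}}{\inprob}<\inprob\le 1-c$, whence $M(t^{*})=1-\snr>c$. Substituting this into the formula for $V$, and using $\sqrt{\inprob\outprob}\le\inprob$ (because $\outprob<\inprob$) and $n'=\num-1\asymp\num$, yields $V<\frac{2\sqrt{\inprob\outprob}}{cn'}\le\frac{2\inprob}{cn'}\le\frac{4\inprob}{c\num}$, which is the first claimed inequality.

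\emph{Second estimate.} Here I would simply multiply three bounds: $V\le\frac{2\inprob}{cn'}$ from the previous step; $(t^{*})^{2}\le\big(\frac{1-\flipprob}{2\flipprob}\big)^{2}\frac{(\inprob-\outprob)^{2}}{\inprob^{2}}$ from squaring Fact~\ref{fact:CBM_tstar_lesssim_(p-q)/p}; and $\frac{1}{\snr^{2}}\le\frac{16\inprob^{2}}{(\inprob-\outprob)^{4}}$ from squaring the reciprocal of the \emph{lower} bound in Fact~\ref{fact:CBM_renyi_equivalence}. The product telescopes to $\frac{V(t^{*})^{2}}{\snr^{2}}\le\frac{32\inprob}{cn'(\inprob-\outprob)^{2}}\big(\frac{1-\flipprob}{2\flipprob}\big)^{2}$. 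To turn the residual factor $\frac{\inprob}{(\inprob-\outprob)^{2}}$ into $\frac{1}{\snr}$, I would invoke the \emph{upper} bound $\snr\le\frac{(\inprob-\outprob)^{2}}{\inprob}$ once more, i.e.\ $\frac{\inprob}{(\inprob-\outprob)^{2}}\le\frac{1}{\snr}$, and absorb $n'\asymp\num$, concluding $\frac{V(t^{*})^{2}}{\snr^{2}}\le\frac{C}{\num\snr}$ with $C$ a constant depending only on $c$ and $\flipprob$ (the $\frac{1-\flipprob}{2\flipprob}$ factor is harmless since $\flipprob\in(0,\tfrac12)$ is fixed).

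\emph{Third estimate and the hard part.} The third bound is then immediate: $\big(t^{*}\sqrt{V}\big)^{2}=(t^{*})^{2}V=\frac{V(t^{*})^{2}}{\snr^{2}}\cdot\snr^{2}\le\frac{C}{\num\snr}\cdot\snr^{2}=\frac{C\snr}{\num}$, so $t^{*}\sqrt{V}\le\sqrt{C}\,\sqrt{\snr/\num}$ and one takes $C_{1}=\sqrt{C}$. I do not expect any real obstacle in this lemma: the content is elementary algebra. The only thing demanding care is bookkeeping the directions of the inequalities — using the upper bound on $\snr$ exactly where a lower bound on $1/\snr$ is needed, and the lower bound on $\snr$ where an upper bound on $1/\snr$ is needed — and noticing that the third estimate need not be proved from scratch once the second is in hand.
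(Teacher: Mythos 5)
Your proposal is correct and follows essentially the same route as the paper: bound $M(t^{*})\ge c$ via $\snr\le\frac{(\inprob-\outprob)^{2}}{\inprob}\le\inprob\le1-c$, then combine the formula $V=\frac{2\sqrt{\inprob\outprob}}{n'M(t^{*})}$ with Facts~\ref{fact:CBM_tstar_lesssim_(p-q)/p} and~\ref{fact:CBM_renyi_equivalence}, using the lower bound on $\snr$ for $1/\snr^{2}$ and the upper bound to convert $\frac{\inprob}{(\inprob-\outprob)^{2}}$ into $\frac{1}{\snr}$. The only cosmetic difference is that you deduce the third estimate from the second via $(t^{*}\sqrt{V})^{2}=\frac{V(t^{*})^{2}}{\snr^{2}}\cdot\snr^{2}$, whereas the paper proves it directly; both are fine.
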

\begin{proof}
By Fact \ref{fact:CBM_renyi_equivalence}, we have $\snr\le\frac{(\inprob-\outprob)^{2}}{\inprob}\le\inprob\le1-c$
and therefore $M\left(t^{*}\right)\ge c$. This implies 
\[
V\le\frac{4\sqrt{\inprob\outprob}}{nM\left(t^{*}\right)}\le\frac{4\inprob}{c\num},
\]
where the first step holds by $n'=\num-1\ge\frac{1}{2}\num$ for $\num\ge2$.
Furthermore, there exist some constants $C,C',C''>0$ such that 
\[
\frac{V\left(t^{*}\right)^{2}}{\snr^{2}}\le C''\frac{\frac{4\inprob}{c\num}\left(\frac{\inprob-\outprob}{\inprob}\right)^{2}}{\left(\frac{(\inprob-\outprob)^{2}}{\inprob}\right)^{2}}=C'\frac{\inprob}{\num(\inprob-\outprob)^{2}}\le\frac{C}{\num\snr},
\]
where the first step holds by Facts \ref{fact:CBM_tstar_lesssim_(p-q)/p}
and \ref{fact:CBM_renyi_equivalence}, and the last step holds by
Fact \ref{fact:CBM_renyi_equivalence}. Finally, we have 
\[
t^{*}\sqrt{V}\le C_{0}\frac{\inprob-\outprob}{\inprob}\sqrt{\frac{\inprob}{\num}}\le C_{1}\sqrt{\frac{\snr}{\num}}
\]
for some constants $C_{0},C_{1}>0$, where the first step holds by
Fact~\ref{fact:CBM_tstar_lesssim_(p-q)/p} and the last step holds
by Fact~\ref{fact:CBM_renyi_equivalence}.
\end{proof}
We return to controlling $Q_{2}$. Recalling that $\delta\coloneqq V^{\frac{1}{4}}\snr^{\frac{1}{2}}\left(t^{*}\right)^{-\frac{1}{2}}$
and using Chebyshev's inequality, we have 
\[
\P\left(\left|\frac{1}{n'}\sum_{i\in\left[n'\right]}W_{i}\right|>\delta\right)\le\frac{V}{\delta^{2}}\le\frac{C}{\sqrt{\num\snr}},
\]
for some constant $C>0$, where the second step holds since $\frac{V}{\delta^{2}}=\frac{t^{*}\sqrt{V}}{\snr}\lesssim\frac{1}{\sqrt{\num\snr}}$
by Lemma~\ref{fact:CBM_V_s_tstar}. Therefore, there exist some constants
$\consts>0$ and $c'\in(0,1)$ that only depends on $\consts$ such
that 
\[
Q_{2}=\P\left(\frac{1}{n'}\sum_{i\in\left[n'\right]}W_{i}\in[0,\delta]\right)=\frac{1}{2}\left(1-\P\left(\left|\frac{1}{n'}\sum_{i\in\left[n'\right]}W_{i}\right|>\delta\right)\right)\ge c'
\]
when $\num\snr\ge\consts$ (implied by our assumption $\num\snr\to\infty$).

\subsubsection{Controlling $Q_{1}$}

The last two inequalities of Lemma~\ref{fact:CBM_V_s_tstar} implies
that $t^{*}\delta=\sqrt{\snr t^{*}\sqrt{V}}\lesssim\sqrt{\snr\sqrt{\frac{\snr}{\num}}}\lesssim\snr\left(\frac{1}{\num\snr}\right)^{\frac{1}{4}}$.
Therefore, for some constant $C'>0$ we have 
\[
Q_{1}\ge\exp\left[-C'\left(\frac{1}{\num\snr}\right)^{\frac{1}{4}}n'\snr\right].
\]

\subsubsection{Controlling $Q_{3}$}

By Taylor's theorem, we have $\log(1-\snr)=-\snr-\frac{\snr^{2}}{2(1-u)^{2}}$
for some $u\in[0,\snr]$. This implies that when $\snr\le1-c_{0}$
for some constant $c_{0}\in(0,1)$, we have $\log(1-\snr)\ge-\snr-C_{2}\snr^{2}$
for some constant $C_{2}\in(0,1)$ that only depends on $c_{0}$.
It follows that
\[
Q_{3}\ge\exp\left[-(1+C_{2}\snr)\num'\snr\right].
\]

\subsubsection{Putting together}

Returning to Equation (\ref{eq:CBM_P(S_n' >=00003D 0) lower bound}),
we obtain
\begin{align*}
\P\left(S_{n'}\ge0\right) & \ge c'\exp\left[-(1+C_{2}\snr)\num'\snr\right]\exp\left[-C'\left(\frac{1}{\num\snr}\right)^{\frac{1}{4}}n'\snr\right]\\
 & =\exp\left[-\left(1+C_{2}\snr+C'\left(\frac{1}{\num\snr}\right)^{\frac{1}{4}}+\frac{1}{\num'\snr}\log\frac{1}{c'}\right)\num'\snr\right].
\end{align*}
The desired inequality follows by taking $\xi\coloneqq C_{2}\snr+C'\left(\frac{1}{\num\snr}\right)^{\frac{1}{4}}+\frac{1}{\num'\snr}\log\frac{1}{c'}$
and noting that $\xi=o(1)$ under our assumptions $\snr=o(1)$ and
$\num\snr\to\infty$.

\section{Proof of the first inequality in Theorem \ref{thm:SDP_error}\label{sec:proof_SDP_error_rate}}

Here we prove the first inequality in Theorem \ref{thm:SDP_error}.
The proof of the second inequality is given in Section \ref{sec:proof_cluster_error_rate}.

\subsection{Preliminaries\label{sec:SDP_proof_prelim}}

Recall the definitions given in Section~\ref{sec:preliminary}. We
introduce some additional notations. For a matrix $\M$, we let $\norm[\M]F\coloneqq\sqrt{\sum_{i,j}M_{ij}^{2}}$
denote its Frobenius norm, $\opnorm{\M}$ its spectral norm (the maximum
singular value), and $\norm[\M]{\infty}\coloneqq\max_{i,j}\left|M_{ij}\right|$
its entrywise $\ell_{\infty}$ norm. With another matrix $\G$ of
the same shape as $\M$, we use $\M\geq\G$ to mean that $M_{ij}\geq G_{ij}$
for all $i,j$. 

Let $\U\coloneqq\frac{1}{\sqrt{\num}}\LabelStar$; it can be seen
that $\U\U\t=\frac{1}{\num}\Ystar$ and in particular $\U$ is a singular
vector of $\Ystar$. Define the projections $\PT(\M)\coloneqq\mathbf{U}\mathbf{U}^{\top}\M+\M\mathbf{U}\mathbf{U}^{\top}-\mathbf{U}\mathbf{U}^{\top}\M\mathbf{U}\mathbf{U}^{\top}$
and $\PTperp(\M)\coloneqq\M-\PT(\M)$ for any $\M\in\mathbb{R}^{\num\times\num}$.
Recall that $\Adj$ is the observed matrix from Model \ref{mdl:Z2},
\ref{mdl:CBM} or \ref{mdl:SBM}. For any $\Yhat$ in the sublevel
set $\betterset(\A)$, we introduce the shorthand $\error\coloneqq\norm[\Yhat-\Ystar]1$
for the $\ell_{1}$ error we aim to bound. Define the shifted adjacency
matrix $\Adj^{0}\coloneqq\Adj-\tune\OneMat$, where $\tune$ is defined
in Equation (\ref{eq:def_tune_param}), and the centered adjacency
matrix (or noise matrix) $\Noise\coloneqq\Adj-\E\Adj=\Adj^{0}-\E\Adj^{0}$.
Crucial to our analysis is a ``dual certificate'' $\D$, which is
an $\num\times\num$ diagonal matrix with diagonal matrices
\[
D_{ii}\coloneqq\sum_{j\in\left[\num\right]}\adj_{ij}^{0}\ystar_{ij}=\labelstar_{i}\sum_{j\in\left[\num\right]}\adj_{ij}^{0}\labelstar_{j},\qquad\text{for each \ensuremath{i\in[\num].}}
\]
See \cite{bandeira2015convex} for how $\D$ arises as a candidate
solution to the dual program of the SDPs~(\ref{eq:CBM_Z2_SDP}) and~(\ref{eq:SBM_SDP}),
though we do not rely on the explicit form of the dual program in
the subsequent proof. 

Let us record some facts about any feasible solution $\Y$ to program
(\ref{eq:CBM_Z2_SDP}) or (\ref{eq:SBM_SDP}).
\begin{fact}
\label{fact:pos_neg_entries_in_Y-Ystar}For any $\Y$ feasible to
program (\ref{eq:CBM_Z2_SDP}) or (\ref{eq:SBM_SDP}), we have
\[
\begin{cases}
Y_{ij}-\ystar_{ij}=0, & \text{if }i=j,\\
Y_{ij}-\ystar_{ij}\le0, & \text{if }\ystar_{ij}=1,\\
Y_{ij}-\ystar_{ij}\ge0, & \text{if }\ystar_{ij}=-1.
\end{cases}
\]
\end{fact}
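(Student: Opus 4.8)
The plan is to use only two of the SDP feasibility constraints, namely the diagonal constraint $Y_{ii}=1$ and the positive semidefiniteness $\Y\succeq0$; the extra equality constraint $\langle\Y,\OneMat\rangle=0$ appearing in program~(\ref{eq:SBM_SDP}) plays no role and can be ignored. So the argument is uniform across programs~(\ref{eq:CBM_Z2_SDP}) and~(\ref{eq:SBM_SDP}).

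First I would dispatch the diagonal case. Since $\labelstar_i\in\{\pm1\}$, we have $\ystar_{ii}=(\labelstar_i)^2=1$, which is exactly the value forced by the feasibility constraint $Y_{ii}=1$; hence $Y_{ii}-\ystar_{ii}=0$. Next, for an off-diagonal pair $i\ne j$, I would restrict $\Y$ to the $2\times2$ principal submatrix indexed by $\{i,j\}$, which is $\left[\begin{smallmatrix}1 & Y_{ij}\\ Y_{ij} & 1\end{smallmatrix}\right]$ after using $Y_{ii}=Y_{jj}=1$. A principal submatrix of a positive semidefinite matrix is itself positive semidefinite, so its determinant $1-Y_{ij}^2$ is nonnegative, giving $|Y_{ij}|\le1$, i.e. $Y_{ij}\in[-1,1]$. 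Finally I combine this two-sided bound with the fact that $\ystar_{ij}=\labelstar_i\labelstar_j\in\{\pm1\}$: if $\ystar_{ij}=1$ then $Y_{ij}-\ystar_{ij}=Y_{ij}-1\le0$, and if $\ystar_{ij}=-1$ then $Y_{ij}-\ystar_{ij}=Y_{ij}+1\ge0$, which is exactly the claim.

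There is essentially no obstacle here: the statement is an elementary consequence of SDP feasibility. The only mildly noteworthy point is the last step, where the sign pattern of $\Ystar$ (entries equal to $+1$ within clusters and $-1$ across clusters) is precisely what converts the symmetric bound $|Y_{ij}|\le1$ into the one-sided inequalities $Y_{ij}-\ystar_{ij}\le0$ and $Y_{ij}-\ystar_{ij}\ge0$. This monotone sign structure of $Y_{ij}-\ystar_{ij}$ is what will later be exploited, e.g., in the semirandom argument of Theorem~\ref{thm:SBM_semirandom}.
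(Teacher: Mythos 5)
Your proof is correct and follows essentially the same route as the paper: the paper's one-line argument that $Y_{ii}=1$ together with $\Y\succeq0$ forces $\|\Y\|_{\infty}\le1$ is exactly the $2\times2$ principal-minor bound you spell out, and the final step using $\Ystar\in\{\pm1\}^{\num\times\num}$ is identical.
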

\begin{proof}
Since $Y_{ii}=1$ for $i\in[\num]$ and $\Y\succeq0$, we have $\norm[\Y]{\infty}=1$.
The result follows from the fact that $\Ystar\in\{\pm1\}^{\num\times\num}$.
\end{proof}
\begin{fact}
\label{fact:PTperp}For any $\Y$ feasible to program (\ref{eq:CBM_Z2_SDP})
or (\ref{eq:SBM_SDP}), the following hold.
\begin{enumerate}[label={(\alph*)},ref={\thefact(\alph*)}]
\item  \label{fact:Tr(PTperp(Yhat))}$\PTperp\left(\Y\right)\succeq0$
and $\Tr\left[\PTperp\left(\Y\right)\right]=\frac{\error}{\num}$.
\item \label{fact:Yhp_infty}$\norm[\PTperp(\Y)]{\infty}\leq4$.
\end{enumerate}
\end{fact}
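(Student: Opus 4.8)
The plan is to base everything on the identity $\PTperp(\Y)=(\I-\U\U^{\top})\,\Y\,(\I-\U\U^{\top})$, obtained by expanding $\PT$ in $\PTperp(\Y)=\Y-\PT(\Y)$; here $\U\U^{\top}=\frac{1}{\num}\Ystar$ is a rank-one orthogonal projection, because $\U^{\top}\U=\frac{1}{\num}\langle\LabelStar,\LabelStar\rangle=1$. Both parts of the Fact will be read off from this identity.

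For part~(a), positive semidefiniteness is immediate: $\PTperp(\Y)$ is $\Y$ conjugated by the symmetric matrix $\I-\U\U^{\top}$, so it inherits $\Y\succeq0$. For the trace identity I would use the idempotence of $\I-\U\U^{\top}$ and the cyclicity of $\Tr$ to get $\Tr[\PTperp(\Y)]=\Tr[\Y]-\U^{\top}\Y\U$; the diagonal constraint gives $\Tr[\Y]=\num$, and $\U\U^{\top}=\frac{1}{\num}\Ystar$ turns $\U^{\top}\Y\U$ into $\frac{1}{\num}\langle\Ystar,\Y\rangle$. The one step that genuinely uses feasibility beyond the diagonal constraint is the evaluation $\langle\Ystar,\Y\rangle=\langle\Ystar,\Ystar\rangle+\sum_{i,j}\ystar_{ij}(Y_{ij}-\ystar_{ij})=\num^{2}+\sum_{i,j}\ystar_{ij}(Y_{ij}-\ystar_{ij})$: by Fact~\ref{fact:pos_neg_entries_in_Y-Ystar}, the quantity $Y_{ij}-\ystar_{ij}$ has the sign opposite to $\ystar_{ij}$ (or vanishes), so since $|\ystar_{ij}|=1$ each summand equals $-|Y_{ij}-\ystar_{ij}|$, giving $\langle\Ystar,\Y\rangle=\num^{2}-\norm[\Y-\Ystar]1$. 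Substituting back yields $\Tr[\PTperp(\Y)]=\num-(\num-\frac{1}{\num}\norm[\Y-\Ystar]1)=\frac{1}{\num}\norm[\Y-\Ystar]1$, which is $\frac{\error}{\num}$.

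For part~(b), I would first exploit the positive semidefiniteness just established: for a symmetric PSD matrix $\M$ one has $|M_{ij}|\le\sqrt{M_{ii}M_{jj}}$, so it suffices to bound the diagonal entries of $\PTperp(\Y)$. Expanding $(\PTperp(\Y))_{ii}=Y_{ii}-2(\U\U^{\top}\Y)_{ii}+(\U\U^{\top}\Y\U\U^{\top})_{ii}$ (using the symmetry of $\Y$), I would bound the three terms by $1$, by $2$, and by $1$ respectively, using $Y_{ii}=1$; $\norm[\Y]{\infty}=1$, which follows from $\Y\succeq0$ and $Y_{ii}=1$; $\norm[\U]{\infty}=\frac{1}{\sqrt{\num}}$; and $0\le\U^{\top}\Y\U=\num-\frac{1}{\num}\norm[\Y-\Ystar]1\le\num$ from part~(a). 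Summing gives $(\PTperp(\Y))_{ii}\le4$, hence $\norm[\PTperp(\Y)]{\infty}\le4$.

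I do not expect a real obstacle here. The two places that need care are the sign accounting that turns $\langle\Ystar,\Y\rangle$ into $\num^{2}-\norm[\Y-\Ystar]1$ in part~(a), and the order of steps in part~(b): one should use $\PTperp(\Y)\succeq0$ from part~(a) to collapse the entrywise bound onto the diagonal, since estimating the triple product $e_i^{\top}(\I-\U\U^{\top})\Y(\I-\U\U^{\top})e_j$ directly via operator norms only yields the useless bound $\opnorm{\Y}=O(\num)$.
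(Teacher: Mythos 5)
Your proof is correct, and part (a) is essentially the paper's argument: the same identity $\PTperp(\Y)=(\I-\U\U\t)\Y(\I-\U\U\t)$, positive semidefiniteness by conjugation, and the same sign accounting through Fact~\ref{fact:pos_neg_entries_in_Y-Ystar}; the paper merely organizes the trace computation as $\Tr\left[(\I-\U\U\t)(\Y-\Ystar)\right]$, killing the identity part via the matched diagonals, instead of expanding $\left\langle \Ystar,\Y\right\rangle$ as you do, but the content is identical. Part (b) is where you genuinely diverge: you invoke $\PTperp(\Y)\succeq0$ and the PSD inequality $|M_{ij}|\le\sqrt{M_{ii}M_{jj}}$ to reduce to diagonal entries, then bound the three diagonal contributions (the last one via $\U\t\Y\U\le\num$ from part (a)). The paper instead applies the entrywise triangle inequality directly to the four-term expansion $\Y-\U\U\t\Y-\Y\U\U\t+\U\U\t\Y\U\U\t$ and bounds each term's $\ell_{\infty}$ norm by $1$, using only $\norm[\Y]{\infty}\le1$ and $(\U\U\t)_{ij}=\frac{1}{\num}\ystar_{ij}$; this is shorter, needs no PSD input, and gives the same constant $4$. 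Your closing claim that a direct estimate only yields $O(\num)$ is true for an operator-norm bound but not for this entrywise bound, so the detour through the diagonal, while perfectly valid, is not actually necessary.
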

\begin{proof}
For part (a), note that $\PTperp\left(\Y\right)=(\IdMat-\U\U\t)\Y(\IdMat-\U\U\t)$,
which is positive semidefinite since $\Y\succeq0$ by feasibility
to program (\ref{eq:CBM_Z2_SDP}) or (\ref{eq:SBM_SDP}). We also
have
\begin{align*}
\Tr\left[\PTperp\left(\Y\right)\right] & \overset{(i)}{=}\Tr\left[\left(\I-\U\U\t\right)\left(\Y-\Ystar\right)\right]\\
 & \overset{(ii)}{=}\Tr\left[\left(-\U\U\t\right)\left(\Y-\Ystar\right)\right]\\
 & =\frac{1}{\num}\Tr\left[\left(-\Ystar\right)\left(\Y-\Ystar\right)\right]=\frac{\error}{\num},
\end{align*}
where step $(i)$ holds since trace is invariant under cyclic permutations
and the matrix $\I-\U\U\t$ is idempotent, and step $(ii)$ holds
since $\ystar_{ii}-Y_{ii}=0$ for $i\in[\num]$.

For part (b), the definition of $\PTperp(\cdot)$ and direct calculation
give
\[
\norm[\PTperp\left(\Y\right)]{\infty}\le\norm[\Y]{\infty}+\norm[\U\U\t\Y]{\infty}+\norm[\Y\U\U\t]{\infty}+\norm[\U\U\t\Y\U\U\t]{\infty}\le4,
\]
where the last step holds because for all $(i,j)$, $(\U\U\t)_{ij}=\frac{1}{\num}\ystar_{ij}\in[-\frac{1}{\num},\frac{1}{\num}]$
and $Y_{ij}\in[-1,1]$.
\end{proof}

We now proceed to the proof of the first inequality in Theorem \ref{thm:SDP_error}.
Following the strategy outlined in Section~\ref{sec:proof_sketch},
we perform the proof in three steps.

\subsection{Step 1: Basic inequality \label{sec:basic_inequality}}

As our first step, we establish the following critical basic inequality.
\begin{lem}
\label{lem:basic_inequality} Any $\Yhat\in\betterset(\Adj)$ satisfies
the inequality 
\[
0\le\left\langle -\D,\PTperp(\Yhat)\right\rangle +\left\langle \Noise,\PTperp(\Yhat)\right\rangle .
\]
\end{lem}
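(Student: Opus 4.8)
The plan is to derive the basic inequality directly from the defining property of $\betterset(\Adj)$, using only feasibility of $\Yhat$ together with the algebraic structure of $\D$ — in particular, without invoking any spectral positivity or optimality of $\Yhat$. First I would record the starting inequality: since $\Yhat\in\betterset(\Adj)$ we have $\left\langle \Adj,\Yhat-\Ystar\right\rangle \ge0$, and writing $\Adj^{0}=\Adj-\tune\OneMat$ I would note this upgrades to $\left\langle \Adj^{0},\Yhat-\Ystar\right\rangle \ge0$, because $\tune=0$ for Z2 and CBM, while for SBM the constraint $\left\langle \Y,\OneMat\right\rangle =0$ together with $\left\langle \Ystar,\OneMat\right\rangle =\left\langle \LabelStar,\one\right\rangle ^{2}=0$ kills the extra term. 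Next, since $\D$ is diagonal and $\yhat_{ii}=1=(\labelstar_{i})^{2}=\ystar_{ii}$ for all $i$, one has $\left\langle \D,\Yhat-\Ystar\right\rangle =\sum_{i}D_{ii}(\yhat_{ii}-\ystar_{ii})=0$; subtracting gives $0\le\left\langle \Adj^{0}-\D,\Yhat-\Ystar\right\rangle $.

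The heart of the argument, and the step I would single out as the real content, is the identity $\PT(\Adj^{0})=\PT(\D)$ — this is precisely why $\D$ is defined with $D_{ii}=\labelstar_{i}\sum_{j}\adj_{ij}^{0}\labelstar_{j}$. To see it, with $\U=\frac{1}{\sqrt{\num}}\LabelStar$ the $i$-th coordinate of $\D\LabelStar$ equals $D_{ii}\labelstar_{i}=(\labelstar_{i})^{2}\sum_{j}\adj_{ij}^{0}\labelstar_{j}=(\Adj^{0}\LabelStar)_{i}$, so $\Adj^{0}\U=\D\U$; using symmetry of $\Adj^{0}$ and $\D$ this also gives $\U\t\Adj^{0}=\U\t\D$ and $\U\t\Adj^{0}\U=\U\t\D\U$, and substituting into $\PT(\M)=\U\U\t\M+\M\U\U\t-\U\U\t\M\U\U\t$ yields $\PT(\Adj^{0})=\PT(\D)$, equivalently $\PTperp(\Adj^{0}-\D)=\Adj^{0}-\D$. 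Combining this with the self-adjointness of $\PTperp$ and with $\PTperp(\Ystar)=0$ (which holds since $\Ystar=\num\U\U\t$ lies in the range of $\PT$), I would rewrite $0\le\left\langle \Adj^{0}-\D,\Yhat-\Ystar\right\rangle =\left\langle \Adj^{0}-\D,\PTperp(\Yhat-\Ystar)\right\rangle =\left\langle \Adj^{0}-\D,\PTperp(\Yhat)\right\rangle =\left\langle \Adj^{0},\PTperp(\Yhat)\right\rangle -\left\langle \D,\PTperp(\Yhat)\right\rangle $.

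Finally I would swap $\Adj^{0}$ for $\Noise=\Adj^{0}-\E\Adj^{0}$ in the first term, which amounts to checking $\left\langle \E\Adj^{0},\PTperp(\Yhat)\right\rangle =0$. A quick mean computation shows $\E\Adj^{0}$ is a scalar multiple of $\Ystar$ for Z2 and CBM, and a linear combination of $\Ystar$ and $\OneMat$ for SBM; then $\left\langle \Ystar,\PTperp(\Yhat)\right\rangle =\left\langle \PTperp(\Ystar),\Yhat\right\rangle =0$, and in the SBM case $\U\t\one=\frac{1}{\sqrt{\num}}\left\langle \LabelStar,\one\right\rangle =0$ makes $\PTperp(\OneMat)=\OneMat$, so $\left\langle \OneMat,\PTperp(\Yhat)\right\rangle =\left\langle \OneMat,\Yhat\right\rangle =0$ by the SDP constraint. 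Hence $\left\langle \Adj^{0},\PTperp(\Yhat)\right\rangle =\left\langle \Noise,\PTperp(\Yhat)\right\rangle $, and plugging this into the previous display gives $0\le\left\langle -\D,\PTperp(\Yhat)\right\rangle +\left\langle \Noise,\PTperp(\Yhat)\right\rangle $, as claimed. Everything except the identity $\PT(\Adj^{0})=\PT(\D)$ is routine; that identity is exactly the sense in which the ``dual certificate'' $\D$ continues to pin down the $T$-component of the problem even when $\Ystar$ is not SDP-optimal, and the only model-specific care is the bookkeeping for the $\left\langle \Y,\OneMat\right\rangle =0$ constraint and the explicit form of $\E\Adj^{0}$ under SBM.
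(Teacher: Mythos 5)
Your proof is correct and follows essentially the same route as the paper: both arguments hinge on the single algebraic identity $\Adj^{0}\LabelStar=\D\LabelStar$ (which you package as $\PT(\Adj^{0})=\PT(\D)$ and apply to $\Yhat-\Ystar$ at once, whereas the paper uses it to kill $\left\langle \Adj^{0}-\D,\Ystar\right\rangle$ and $\left\langle \Adj^{0}-\D,\PT(\Yhat)\right\rangle$ separately), and both finish with the same model-by-model check that $\left\langle \E\Adj-\tune\OneMat,\PTperp(\Yhat)\right\rangle =0$. The repackaging via the projection identity and the self-adjointness of $\PTperp$ is a cosmetic difference, not a different method.
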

\noindent We prove this lemma in Section \ref{sec:proof_basic_inequality}.

With the basic inequality, we can reduce the problem of bounding the
error $\frac{1}{\num}\error=\frac{1}{\num}\norm[\Yhat-\Ystar]1$ to
that of studying the two random matrices $\D$ (the dual certificate)
and $\Noise$ (the noise matrix). In particular, recall that the matrix
$\PTperp(\Yhat)$ satisfies $\Tr\big[\PTperp(\Yhat)\big]=\frac{1}{\num}\error$
and the other properties in Fact~\ref{fact:PTperp}. The rest of
the proof relies only on these properties of $\PTperp(\Yhat)$, and
it suffices to study how matrices with such properties interact with
$\D$ and $\Noise$.

Henceforth we use $S_{1}\coloneqq\left\langle -\D,\PTperp(\Yhat)\right\rangle $
and $S_{2}\coloneqq\left\langle \Noise,\PTperp(\Yhat)\right\rangle $
to denote the two terms on the RHS of the basic inequality. As the
next two steps of the proof, we first control $S_{2}$, and then derive
the desired exponential error rate by analyzing the sum $S_{1}+S_{2}$.

\subsection{Step 2: Controlling $S_{2}$\label{sec:S2}}

The proposition below provides a bound on $S_{2}$.
\begin{prop}
\label{prop:S2}Under the conditions of Theorem \ref{thm:SDP_error},
with probability at least $1-\frac{4}{\sqrt{\num}}$, at least one
of the following inequalities holds: 
\begin{align*}
\frac{\error}{\num} & \le\left\lfloor \num\exp\left[-\left(1-\conste\sqrt{\frac{1}{\num\snr}}\right)\proxnum\snr\right]\right\rfloor ,\\
S_{2} & \le\constStwo\frac{\error}{\num}\sqrt{\frac{1}{\num\snr}}\frac{\proxnum\snr}{t^{*}},
\end{align*}
where $\conste>0$ and $\constStwo>0$ are numeric constants.  
\end{prop}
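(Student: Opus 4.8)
The plan is to bound $S_{2}=\langle\Noise,\PTperp(\Yhat)\rangle$ using only the structural properties of $\PTperp(\Yhat)$ recorded in Fact~\ref{fact:PTperp}: it is positive semidefinite, has trace $\Tr[\PTperp(\Yhat)]=\frac{\error}{\num}$, and has entries bounded by $4$ in absolute value. Since $\PTperp(\Yhat)\succeq0$ and $\Noise$ is symmetric, the trace (von Neumann) inequality gives
\[
S_{2}\le\bigl|\langle\Noise,\PTperp(\Yhat)\rangle\bigr|\le\opnorm{\Noise}\cdot\Tr\bigl[\PTperp(\Yhat)\bigr]=\opnorm{\Noise}\cdot\frac{\error}{\num}.
\]
Hence it suffices to control $\opnorm{\Noise}$, or a suitably truncated version of it, and then to check via the model-specific estimates in Facts~\ref{fact:CBM_estimate} and~\ref{fact:SBM_estimate} that the resulting bound has the claimed order $\frac{\error}{\num}\sqrt{1/(\num\snr)}\cdot\proxnum\snr/t^{*}$.

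For Model~\ref{mdl:Z2} (Z2), $\Noise$ is a symmetric Gaussian (Wigner) matrix with entry variance $\std^{2}$, so a standard $\varepsilon$-net plus Bernstein argument yields $\opnorm{\Noise}\le C\std\sqrt{\num}$ on an event of probability at least $1-\frac{4}{\sqrt{\num}}$ (indeed far more). Substituting $\snr=\frac{1}{2\std^{2}}$, $t^{*}=\frac{1}{\std^{2}}$ and $\proxnum=\num$ into the display, one gets $S_{2}\le C\std\sqrt{\num}\cdot\frac{\error}{\num}\asymp\frac{\error}{\num}\sqrt{1/(\num\snr)}\cdot\proxnum\snr/t^{*}$, which is the second inequality of the proposition; the first alternative is never needed here.

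For Models~\ref{mdl:CBM} (CBM) and~\ref{mdl:SBM} (SBM) the naive bound fails in the sparse regime, because a few high-degree vertices can inflate $\opnorm{\Noise}$ well beyond the ``bulk'' scale $\sqrt{\num\obsprob}$ (resp.\ $\sqrt{\num\inprob}$). I would therefore trim: let $\mathcal{B}\subseteq[\num]$ be the set of vertices whose degree in $\Adj$ exceeds a fixed constant multiple of the average, let $\mathcal{G}:=[\num]\setminus\mathcal{B}$, and decompose $\Noise=\Noise_{\mathcal{G}\times\mathcal{G}}+\Noise_{\mathcal{B}}$, where $\Noise_{\mathcal{B}}$ collects the remaining rows and columns (those meeting $\mathcal{B}$). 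A Feige--Ofek / Le--Levina--Vershynin type concentration bound gives $\opnorm{\Noise_{\mathcal{G}\times\mathcal{G}}}\lesssim\sqrt{\num\obsprob}$ (resp.\ $\sqrt{\num\inprob}$) with probability at least $1-\frac{4}{\sqrt{\num}}$; combined with the trace inequality and Facts~\ref{fact:CBM_renyi_equivalence}, \ref{fact:CBM_tstar_lesssim_(p-q)/p} (resp.\ \ref{fact:SBM_renyi_equivalence}, \ref{fact:SBM_tstar_lesssim_(p-q)/p}), the bulk contribution $\langle\Noise_{\mathcal{G}\times\mathcal{G}},\PTperp(\Yhat)\rangle$ is of the target order. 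For the remainder $\langle\Noise_{\mathcal{B}},\PTperp(\Yhat)\rangle$ I would use $\norm[\Noise]{\infty}\lesssim1$ together with the positive-semidefiniteness of $\M:=\PTperp(\Yhat)$ --- which forces $|M_{ij}|\le\sqrt{M_{ii}M_{jj}}$, $M_{ii}\in[0,4]$, and $\sum_{i}M_{ii}=\frac{\error}{\num}$ --- and two applications of Cauchy--Schwarz to bound it by a quantity that, since $|\mathcal{B}|$ is small with high probability, is negligible relative to the target \emph{unless} $\error$ has already fallen to the exact-recovery scale $\num^{2}\exp[-(1-\conste\sqrt{1/(\num\snr)})\proxnum\snr]$. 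This dichotomy is precisely what produces the two-alternative form of the statement, and a final union bound over the constantly many events used above delivers the overall probability $1-\frac{4}{\sqrt{\num}}$.

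The main obstacle is the CBM/SBM case: the trimming threshold and the bad-set estimate must be calibrated so that simultaneously (i) the trimmed operator norm is $O(\sqrt{\num\obsprob})$ with the demanded probability, and (ii) the contribution of the removed rows and columns is genuinely lower-order than $\frac{\error}{\num}\sqrt{1/(\num\snr)}\cdot\proxnum\snr/t^{*}$ whenever $\error$ is not already at the exact-recovery scale. Carefully tracking the constants through these concentration inequalities is also what pins down the numerical constants $\conste$ and $\constStwo$ appearing in the statement.
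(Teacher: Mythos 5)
Your overall route coincides with the paper's: for Z2 the trace inequality $\langle\Noise,\PTperp(\Yhat)\rangle\le\opnorm{\Noise}\Tr[\PTperp(\Yhat)]$ plus the Gaussian spectral bound is exactly the paper's argument, and for CBM/SBM the paper likewise trims heavy rows/columns, bounds the bulk by a Feige--Ofek-type spectral estimate, and handles the removed part through the same ``either $\error$ is already exponentially small or the residual is lower order'' dichotomy. The gap is in your treatment of the removed part. Your bound uses $|M_{ij}|\le\sqrt{M_{ii}M_{jj}}$, $\sum_i M_{ii}=\error/\num$, and Cauchy--Schwarz, which yields something of the form $\langle\Noise_{\mathcal{B}},\PTperp(\Yhat)\rangle\lesssim\error\sqrt{|\mathcal{B}|/\num}$. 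Matching this against the target $\constStwo\frac{\error}{\num}\sqrt{\frac{1}{\num\snr}}\frac{\proxnum\snr}{t^{*}}\asymp\error(\inprob-\outprob)\sqrt{\frac{1}{\num\snr}}\asymp\error\sqrt{\inprob/\num}$ forces $|\mathcal{B}|\lesssim\inprob\le1$, i.e.\ essentially no bad vertices at all. But in the regime the proposition must cover ($\num\snr$ a large constant, hence $\num\inprob$ a constant), the number of vertices whose degree exceeds a constant multiple of $\num\inprob$ is typically of order $\num e^{-c\num\inprob}$, which is polynomial in $\num$; so this calibration cannot be made to work. Moreover, because your residual bound is proportional to $\error$, the dichotomy cannot rescue it: a bound of the form $c_{1}\error$ with $c_{1}$ a constant is never negligible relative to $\error\sqrt{\inprob/\num}$ no matter how large or small $\error$ is.

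What the dichotomy actually requires, and what the paper supplies, is an \emph{error-independent} bound on the trimmed-away mass: using $\norm[\PTperp(\Yhat)]{\infty}\le4$ one bounds the residual by $\norm[\Adj^{\text{up}}-\widetilde{\Adj^{\text{up}}}]1$ (resp.\ $\norm[\Noise^{\text{up}}-\widetilde{\Noise^{\text{up}}}]1$), and Lemma~\ref{lm:untrimmed_L1_bound} shows that the total $\ell_{1}$ mass (total degree) of the rows and columns removed at threshold $40\num\inprob$ is at most $\asymp\num^{2}\inprob e^{-5\num\inprob}$ with the stated probability. Only then does the case $\error>\frac{1}{2}\num^{2}e^{-(1-\xi)\num\snr}$, combined with the elementary inequality $\inprob e^{-5\num\inprob}\le(\inprob-\outprob)e^{-c\num\snr}$ (Lemmas~\ref{lm:CBM_simple_ineq}, \ref{lm:SBM_simple_ineq}), convert this absolute quantity into $\lesssim\error(\inprob-\outprob)\sqrt{1/(\num\snr)}$. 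This exponential control of the \emph{total degree} of heavy vertices is a genuinely nontrivial concentration statement (the paper imports it from \cite{zhang2017theoretical} and \cite{rebrova2016local}); it does not follow from a cardinality bound on $\mathcal{B}$ plus Cauchy--Schwarz, and it is precisely the ingredient your proposal flags as the ``main obstacle'' but leaves unresolved. (Secondarily, for CBM the paper also needs a conditioning step, decomposing through the sign matrix $\G$ before trimming, since the censored $\pm1$ entries are not directly of the Bernoulli form to which the trimmed spectral bound applies.)
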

\noindent The proof of this proposition is model-dependent, and is
given in Sections~\ref{sec:proof_prop_Z2_S2}, \ref{sec:proof_CBM_prop_S2}
and \ref{sec:proof_SBM_prop_S2} for Models \ref{mdl:Z2}, \ref{mdl:CBM}
and \ref{mdl:SBM}, respectively.

While technical in its form, Proposition \ref{prop:S2} has a simple
interpretation: either the desired exponential error bound already
holds, or $S_{2}$ is bounded by a small quantity that eventually
dictates the second order term in the error exponent. To proceed,
we may assume that the second bound in Proposition~\ref{prop:S2}
holds. Plugging this bound into the basic inequality in Lemma \ref{lem:basic_inequality},
we obtain that
\begin{equation}
0\le\underbrace{\left\langle -\D,\PTperp(\Yhat)\right\rangle }_{S_{1}}+\constStwo\frac{\error}{\num}\cdot\frac{1}{t^{*}}\sqrt{\frac{1}{\num\snr}}\proxnum\snr.\label{eq:gamma_bound}
\end{equation}

\subsection{Step 3: Analyzing $S_{1}+S_{2}$\label{sec:S1+S2}}

If $\error=0$ then we are done, so we assume $\error>0$ in the sequel.
To show that $\error$ decays exponentially in $\snr$, we need a
simple pilot bound on $\error$ that is polynomial in $\snr$. 
\begin{lem}
\emph{\label{lem:pilot_bound} }Suppose that $\std>0$ for Model~\ref{mdl:Z2},
$\num\obsprob\ge1$ for Model \ref{mdl:CBM}, and $\num\inprob\ge1$
for Model \ref{mdl:SBM}. There exists a constant $\constpilot>0$
such that with probability at least $1-2(e/2)^{-2\num}$,
\[
\error\leq\constpilot\sqrt{\frac{\num^{3}}{\snr}}.
\]
\end{lem}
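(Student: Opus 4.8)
\textbf{Proof proposal for Lemma~\ref{lem:pilot_bound}.} The plan is to extract this polynomial bound directly from the objective inequality defining $\betterset(\Adj)$, reducing everything to a single large-deviation estimate for a bilinear form of the noise matrix $\Noise=\Adj-\E\Adj$. First I would note that in all three models $\left\langle \Adj^{0},\Yhat-\Ystar\right\rangle \ge0$: for Z2 and CBM this is immediate since $\Adj^{0}=\Adj$ and $\left\langle \Adj,\Yhat\right\rangle \ge\left\langle \Adj,\Ystar\right\rangle $ by $\Yhat\in\betterset(\Adj)$; for SBM the shift $-\tune\OneMat$ is harmless because $\left\langle \OneMat,\Yhat\right\rangle =\left\langle \OneMat,\Ystar\right\rangle =0$ by the SDP constraint and the balanced-cluster assumption. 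Splitting $\Adj^{0}=\E\Adj^{0}+\Noise$, the deterministic part is easy: in each model $\E\Adj=c_{1}\Ystar+(\text{scalar})\cdot\OneMat$, with $c_{1}=1$ (Z2), $c_{1}=\obsprob(1-2\flipprob)=\inprob-\outprob$ (CBM), and $c_{1}=\tfrac{\inprob-\outprob}{2}$ (SBM); since $\left\langle \OneMat,\Yhat-\Ystar\right\rangle =0$ and Fact~\ref{fact:pos_neg_entries_in_Y-Ystar} gives $\left\langle \Ystar,\Yhat-\Ystar\right\rangle =-\norm[\Yhat-\Ystar]1=-\error$, we get $\left\langle \E\Adj^{0},\Yhat-\Ystar\right\rangle =-c_{1}\error$, hence
\[
c_{1}\error\ \le\ \left\langle \Noise,\Yhat-\Ystar\right\rangle .
\]

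Next I would bound $\left\langle \Noise,\Yhat-\Ystar\right\rangle =\left\langle \Noise,\Yhat\right\rangle -\left\langle \Noise,\Ystar\right\rangle $. As $\Yhat\succeq0$ with unit diagonal, Grothendieck's inequality gives $\left\langle \Noise,\Yhat\right\rangle \le\max_{\X\succeq0,\,X_{ii}=1}\left\langle \Noise,\X\right\rangle \le K_{G}\max_{x,y\in\{\pm1\}^{\num}}x\t\Noise y$ for the Grothendieck constant $K_{G}$, while $\left\langle \Noise,\Ystar\right\rangle =(\LabelStar)\t\Noise\LabelStar\le\max_{x,y\in\{\pm1\}^{\num}}x\t\Noise y$; thus $\left\langle \Noise,\Yhat-\Ystar\right\rangle \le(K_{G}+1)\max_{x,y}\lvert x\t\Noise y\rvert$. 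The bilinear form $x\t\Noise y$ is mean-zero with $\sum_{i,j}\Var(\noise_{ij})\lesssim\num^{2}\std^{2}$ (Z2), $\le\num^{2}\obsprob$ (CBM), $\le\num^{2}\inprob$ (SBM) and $\lvert\noise_{ij}\rvert\le2$ in the latter two cases. A Gaussian tail bound (Z2) or Bernstein's inequality (CBM, SBM), union-bounded over the $4^{\num}$ sign pairs --- here the hypotheses $\num\obsprob\ge1$, resp.\ $\num\inprob\ge1$, absorb the linear Bernstein term, and the constant in the exponent is tuned so that $4^{\num}e^{-(\cdot)}\lesssim(e/2)^{-2\num}$ --- yields, with probability at least $1-2(e/2)^{-2\num}$,
\[
\max_{x,y\in\{\pm1\}^{\num}}\lvert x\t\Noise y\rvert\ \lesssim\ \num^{3/2}\std\ \ (\text{Z2}),\qquad \num^{3/2}\sqrt{\obsprob}\ \ (\text{CBM}),\qquad \num^{3/2}\sqrt{\inprob}\ \ (\text{SBM}).
\]
For Z2 one may equally use $\left\langle \Noise,\Y\right\rangle \le\opnorm{\Noise}\Tr\Y$ together with the standard bound $\opnorm{\Noise}\lesssim\std\sqrt{\num}$ for Gaussian matrices.

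Dividing by $c_{1}$ then gives $\error\lesssim\num^{3/2}\std$ (Z2), $\num^{3/2}\sqrt{\obsprob}/(\inprob-\outprob)$ (CBM), $\num^{3/2}\sqrt{\inprob}/(\inprob-\outprob)$ (SBM); it remains to match these to $\sqrt{\num^{3}/\snr}$. For Z2, $\std^{2}=(2\snr)^{-1}$ finishes it. For CBM, Fact~\ref{fact:CBM_renyi_equivalence} gives $\snr\le(\inprob-\outprob)^{2}/\inprob$ and $\inprob=\obsprob(1-\flipprob)\asymp\obsprob$, so $\sqrt{\obsprob}/(\inprob-\outprob)\lesssim1/\sqrt{\snr}$. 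For SBM, Fact~\ref{fact:SBM_renyi_equivalence} gives $\snr\asymp(\inprob-\outprob)^{2}/\inprob$, so $\sqrt{\inprob}/(\inprob-\outprob)\asymp1/\sqrt{\snr}$. In every case $\error\lesssim\sqrt{\num^{3}/\snr}$, which is the claim with a suitable absolute constant $\constpilot$.

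I expect the only genuinely delicate point to be the bound on $\left\langle \Noise,\Yhat\right\rangle $ in the \emph{sparse} regime (constant $\num\obsprob$ or $\num\inprob$): there the naive estimate $\left\langle \Noise,\Yhat\right\rangle \le\num\opnorm{\Noise}$ is too weak, because the spectral norm of a sparse random matrix is governed by its maximal degree and is of order $\sqrt{\log\num/\log\log\num}$ rather than $\sqrt{\num\obsprob}$, which would destroy the clean $\sqrt{\num^{3}/\snr}$ scaling. Routing through Grothendieck's inequality is exactly what circumvents this --- the positive-semidefinite-constrained maximum is controlled by a maximum over sign vectors, which concentrates at the Bernstein scale $\num^{3/2}\sqrt{\obsprob}$; this is the Gu\'edon--Vershynin mechanism. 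All the remaining ingredients --- the objective-inequality manipulation, the identity $\left\langle \Ystar,\Yhat-\Ystar\right\rangle =-\error$ from Fact~\ref{fact:pos_neg_entries_in_Y-Ystar}, and the SNR bookkeeping via Facts~\ref{fact:CBM_renyi_equivalence} and~\ref{fact:SBM_renyi_equivalence} --- are routine.
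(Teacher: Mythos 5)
Your proposal is correct and follows essentially the same route as the paper: the objective inequality from $\betterset(\Adj)$ combined with $\left\langle \E\Adj,\Yhat-\Ystar\right\rangle=-c_{1}\error$ (via Fact~\ref{fact:pos_neg_entries_in_Y-Ystar} and $\left\langle \OneMat,\Yhat-\Ystar\right\rangle=0$), then Grothendieck's inequality plus Bernstein and a union bound over sign vectors for CBM/SBM, and the SNR equivalences in Facts~\ref{fact:CBM_renyi_equivalence} and~\ref{fact:SBM_renyi_equivalence}. The only cosmetic difference is that for Z2 the paper uses the spectral-norm bound $\opnorm{\Noise}\lesssim\std\sqrt{\num}$ with $\left\langle \G,\M\right\rangle\le\opnorm{\G}\Tr(\M)$, which you also note as an equivalent alternative.
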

\noindent The proof is model-dependent, and is given in Sections~\ref{sec:proof_Z2_pilot_bound},
\ref{sec:proof_CBM_pilot_bound} and \ref{sec:proof_SBM_pilot_bound}
for Models \ref{mdl:Z2}, \ref{mdl:CBM} and~\ref{mdl:SBM}, respectively.
We verify that the premise of Lemma~\ref{lem:pilot_bound} is satisfied:
for Model~\ref{mdl:Z2} this is clear; for Models \ref{mdl:CBM}
and \ref{mdl:SBM}, we have $\inprob\ge\frac{(\inprob-\outprob)^{2}}{\inprob}\gtrsim\snr$
thanks to Facts \ref{fact:CBM_renyi_equivalence} and \ref{fact:SBM_renyi_equivalence}
(recall $\inprob:=\obsprob(1-\flipprob)$ in Model \ref{mdl:CBM}),
and $\num\snr\ge\consts$ for some large enough $\consts>0$ under
the premise of Theorem~\ref{thm:SDP_error}. \\

Now recall that the positive semidefinite matrix $\PTperp(\Yhat)$
has non-negative diagonal entries and satisfies $\Tr\left[\PTperp(\Yhat)\right]=\frac{\error}{\num}$
(Fact \ref{fact:Tr(PTperp(Yhat))}). Define the (non-negative) numbers
\[
b_{i}\coloneqq\left(\PTperp\left(\Yhat\right)\right)_{ii},\qquad b_{\max}=4,\qquad\beta\coloneqq\frac{1}{b_{\max}}\sum_{i\in\left[\num\right]}b_{i}=\frac{\error}{b_{\max}\num}
\]
and the random variables $X_{i}\coloneqq-D_{ii}$, which is the $i$-th
diagonal entry of the dual certificate $\D$ defined in Section~\ref{sec:SDP_proof_prelim}.
With the above notations, we have 
\begin{equation}
S_{1}:=\left\langle -\D,\PTperp(\Yhat)\right\rangle =\sum_{i\in[\num]}X_{i}b_{i}=b_{\max}\sum_{i\in\left[\num\right]}X_{i}\left(\frac{b_{i}}{b_{\max}}\right),\label{eq:S1_as_sum}
\end{equation}
where $\frac{b_{i}}{b_{\max}}\in[0,1]$ by Fact \ref{fact:Yhp_infty}. 

To proceed, we shall employ a technique that is reminiscent of the
``order statistics argument'' in \cite{fei2018hidden,fei2019exponential}.
Let $X_{(1)}\ge X_{(2)}\ge\cdots\ge X_{(\num)}$ be the order statistics
of $\left\{ X_{i}\right\} $. Let $C$ be a constant to be chosen
later. For ease of presentation, we define shorthands $\eta\coloneqq C\sqrt{\frac{1}{\num\snr}}$
,
\begin{align*}
\varrho\left(m,c\right) & \coloneqq\frac{1}{t^{*}}\left[(1+\eta)\log\left(\frac{\num e}{m}\right)-(1-c\eta)\proxnum\snr\right],\qquad\text{and}\\
\vartheta\left(c\right) & \coloneqq\exp\left[-(1-c\eta)\proxnum\snr\right].
\end{align*}
Below we consider two cases: $\beta>1$ and $0<\beta\le1$, where
we recall that $\beta\coloneqq\sum_{i\in\left[\num\right]}\left(\frac{b_{i}}{b_{\max}}\right)=\frac{\error}{b_{\max}\num}$.

\subsubsection*{Case 1: $\beta>1$.}

In this case, the expression (\ref{eq:S1_as_sum}) implies that
\[
S_{1}\le b_{\max}\left[\sum_{i\in\left[\left\lfloor \beta\right\rfloor \right]}X_{(i)}+\left(\beta-\left\lfloor \beta\right\rfloor \right)X_{(\left\lceil \beta\right\rceil )}\right].
\]
Combining with Equation (\ref{eq:gamma_bound}), we obtain that
\begin{align*}
0 & \le S_{1}+\constStwo b_{\max}\beta\frac{1}{t^{*}}\sqrt{\frac{1}{\num\snr}}\proxnum\snr\\
 & \le b_{\max}\left[\sum_{i\in\left[\left\lfloor \beta\right\rfloor \right]}\left(X_{(i)}+\constStwo\frac{1}{t^{*}}\sqrt{\frac{1}{\num\snr}}\proxnum\snr\right)+\left(\beta-\left\lfloor \beta\right\rfloor \right)\left(X_{(\left\lceil \beta\right\rceil )}+\constStwo\frac{1}{t^{*}}\sqrt{\frac{1}{\num\snr}}\proxnum\snr\right)\right].
\end{align*}
When $\beta$ is not an integer, the residual term above involving
$\left(\beta-\left\lfloor \beta\right\rfloor \right)$ is cumbersome.
Fortunately, the following simple lemma (proved in Section \ref{sec:proof_lem_make_gamma_integer})
allows us to take the integer part of $\beta$.
\begin{lem}
\label{lem:make_gamma_integer} Suppose that $\beta\in[1,\num]$,
and $\phi_{1}\ge\phi_{2}\ge\ldots\ge\phi_{\num}$ are $\num$ fixed
numbers. Define $V(u)\coloneqq\sum_{i\in[\left\lfloor u\right\rfloor ]}\phi_{i}+(u-\left\lfloor u\right\rfloor )\phi_{\left\lceil u\right\rceil }$.
If $0\le V(\beta)$, then $0\le V(\beta_{0})$ for any $\beta_{0}\in[1,\beta]$.
\end{lem}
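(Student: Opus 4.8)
The plan is to recognize $V$ as the piecewise-linear interpolant of the partial sums $S_k \coloneqq \sum_{i\le k}\phi_i$ and to exploit the concavity that comes for free from the monotonicity of the $\phi_i$'s. First I would check, directly from the definition, that $V(k) = S_k$ at each integer $k \in \{1,\dots,\num\}$, that $V$ is continuous, and that on each interval $[k,k+1]$ it is affine with slope $\phi_{k+1}$. Since $\phi_1 \ge \phi_2 \ge \cdots \ge \phi_\num$, these slopes are non-increasing in $k$, so $V$ is concave on $[1,\num]$. I would also record the endpoint value $V(1) = \phi_1$.

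Next I would split according to the sign of $\phi_1$. If $\phi_1 < 0$, then every $\phi_i$ is negative, so $V$ is strictly decreasing on $[1,\num]$ and $V(u) \le V(1) = \phi_1 < 0$ for all $u$; in particular $V(\beta) < 0$, contradicting the hypothesis, so this case does not occur. Otherwise $\phi_1 \ge 0$, i.e., $V(1) \ge 0$. Then, given $\beta_0 \in [1,\beta]$, I would dispatch the case $\beta = 1$ trivially (it forces $\beta_0 = 1$ and $V(\beta_0) = \phi_1 \ge 0$) and, for $\beta > 1$, write $\beta_0$ as the convex combination $\beta_0 = \lambda\cdot 1 + (1-\lambda)\beta$ with $\lambda = \frac{\beta-\beta_0}{\beta-1}\in[0,1]$. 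Concavity of $V$ then yields
\[
V(\beta_0) \;\ge\; \lambda\, V(1) + (1-\lambda)\, V(\beta) \;\ge\; 0,
\]
since both terms on the right are non-negative. This gives the claim.

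I do not expect a genuine obstacle here. The only steps that need a bit of care are the bookkeeping that turns the floor/ceiling definition of $V$ into the clean ``affine-with-slope $\phi_{k+1}$ on $[k,k+1]$'' description (so that concavity is transparent), and the treatment of the degenerate endpoints $\beta = 1$ and $\beta_0 \in \{1,\beta\}$. It is worth emphasizing that the hypothesis $\phi_1 \ge \cdots \ge \phi_\num$ is doing all the work: without monotonicity the interpolant need not be concave and the conclusion is simply false.
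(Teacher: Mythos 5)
Your proof is correct, and it takes a genuinely different route from the paper's. You identify $V$ as the piecewise-linear interpolant of the partial sums $S_k=\sum_{i\le k}\phi_i$, observe that the slopes $\phi_{k+1}$ are non-increasing so $V$ is concave on $[1,\num]$, rule out $\phi_1<0$ (since then $V(\beta)\le V(1)=\phi_1<0$, contradicting the hypothesis), and conclude by writing $\beta_0$ as a convex combination of $1$ and $\beta$ and using $V(1)\ge 0$, $V(\beta)\ge 0$. The paper instead argues directly: it lets $w$ be the smallest index with $\phi_w<0$ (handling the all-nonnegative case trivially), notes that $V(\beta_0)\ge 0$ for $\beta_0\in[1,w-1]$ because all the summands there are nonnegative, and that $V$ is decreasing on $[w-1,\beta]$ because all later $\phi_i$ are negative, so $V(\beta_0)\ge V(\beta)\ge 0$. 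Both arguments hinge on the monotonicity of the $\phi_i$'s; your concavity formulation is slightly slicker and packages the case analysis into one inequality (and would generalize to any concave $V$ with $V(1)\ge 0$), while the paper's first-negative-index split is more elementary and avoids invoking convexity language. Either way the edge cases ($\beta=1$, $\beta_0\in\{1,\beta\}$) are handled cleanly, so there is no gap.
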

Letting $\beta_{0}\coloneqq\left\lfloor \beta\right\rfloor $ and
invoking Lemma \ref{lem:make_gamma_integer}, we deduce from the
last displayed equation that
\begin{align*}
0 & \le b_{\max}\sum_{i\in\left[\beta_{0}\right]}\left(X_{(i)}+\constStwo\frac{1}{t^{*}}\sqrt{\frac{1}{\num\snr}}\proxnum\snr\right)\\
 & =b_{\max}\sum_{i\in\left[\beta_{0}\right]}X_{(i)}+b_{\max}\beta_{0}\constStwo\frac{1}{t^{*}}\sqrt{\frac{1}{\num\snr}}\proxnum\snr\\
 & \le b_{\max}\cdot\max_{\substack{\calM\subset\left[\num\right]\\
\left|\calM\right|=\beta_{0}
}
}\left\{ \sum_{i\in\calM}X_{i}\right\} +b_{\max}\beta_{0}\constStwo\frac{1}{t^{*}}\sqrt{\frac{1}{\num\snr}}\proxnum\snr.
\end{align*}
 The following lemma, proved in Section \ref{sec:proof_subset_sum_ineq},
provides a tight bound on the first sum above.  
\begin{lem}
\label{lem:subset_sum_ineq}Let $C$ be any constant satisfying $C\ge2\sqrt{2}$.
Let $\eta:=C\sqrt{\frac{1}{\num\snr}}$ and $M$ be any positive number
satisfying $1\le M\le\max\left\{ 1,C\sqrt{\frac{\num}{\snr}}\right\} $.
If $\num\snr\ge\consts$ for some constant $\consts\ge4$, then we
have 
\begin{align*}
\max_{\substack{\calM\subset\left[\num\right]\\
\left|\calM\right|=m
}
}\left\{ \sum_{i\in\calM}X_{i}\right\}  & \le\frac{1}{t^{*}}\left((1+\eta)m\log\left(\frac{\num e}{m}\right)-(1-2\eta)m\proxnum\snr\right),\quad\forall m=1,2,\ldots,\left\lfloor M\right\rfloor 
\end{align*}
with probability at least $1-3\exp\left(-\sqrt{\log\num}\right)$. 
\end{lem}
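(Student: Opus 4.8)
The plan is a Chernoff bound for a fixed pair $(m,\calM)$ with $|\calM|=m$, followed by a union bound over the $\binom{\num}{m}$ sets of that size and then over $m\in\{1,\dots,\lfloor M\rfloor\}$. Write $X_{i}=-D_{ii}=-\labelstar_{i}\sum_{j\in[\num]}\adj_{ij}^{0}\labelstar_{j}$ and let $\theta_{m}$ be the right-hand side of the asserted inequality. The exponential Markov inequality at the tilt $t^{*}>0$ gives
\[
\P\Big(\sum_{i\in\calM}X_{i}>\theta_{m}\Big)\le e^{-t^{*}\theta_{m}}\cdot\E\exp\Big(t^{*}\sum_{i\in\calM}X_{i}\Big),
\]
so everything reduces to estimating this moment generating function.

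Expand $\sum_{i\in\calM}X_{i}=-\sum_{i\in\calM}\sum_{j\in[\num]}\labelstar_{i}\labelstar_{j}\adj_{ij}^{0}$ and regroup by unordered index pairs: each \emph{within-$\calM$} entry $\adj_{ij}^{0}$ ($i,j\in\calM$, $i\ne j$) carries coefficient $-2\labelstar_{i}\labelstar_{j}$, each diagonal entry carries $-1$, and each \emph{cross} entry (exactly one index in $\calM$) carries $-\labelstar_{i}\labelstar_{j}$. Since the entries $\{\adj_{ij}^{0}:i\le j\}$ are mutually independent under Models~\ref{mdl:Z2}--\ref{mdl:SBM}, the MGF factorises over these three groups. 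Each of the $m(\num-m)$ cross factors and each of the $m$ diagonal factors equals $\E e^{-t^{*}\var}$ for Z2/CBM and $\E e^{-t^{*}(\adj_{ij}-\tune)}$ for SBM (using $\labelstar_{i}\labelstar_{j}\in\{\pm1\}$, the distributional identities of Section~\ref{sec:preliminary}, and Fact~\ref{fact:SBM_tune_param_vs_p_q} for SBM), and the ``magic identities'' (Facts~\ref{fact:Z2_magic_identity}, \ref{fact:CBM_magic_identity}, \ref{fact:SBM_magic_identity}) bound each by $e^{-\snr}$ for Z2/CBM and identify it as $e^{-\snr/2}$ for SBM. The crucial observation is that each of the $\binom{m}{2}$ within-$\calM$ factors --- equal to $\E e^{-2t^{*}\var}$ (Z2/CBM) or $\E e^{\pm 2t^{*}(\adj_{ij}-\tune)}$ (SBM) --- is \emph{exactly $1$}, because $\E e^{-2t^{*}\var}=1$ (resp.\ $\E e^{\pm2t^{*}(\adj_{ij}-\tune)}=1$): $2t^{*}$ is the tilt at which the doubled log-moment-generating function vanishes, a further elementary consequence of the magic identities. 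Collecting the groups, and recalling $\proxnum=\num$ for Z2/CBM and $\proxnum=\num/2$ for SBM, we obtain
\[
\E\exp\Big(t^{*}\sum_{i\in\calM}X_{i}\Big)\le\exp\Big(-m\proxnum\snr+\tfrac{1}{\kappa}\,m(m-1)\,\snr\Big),
\]
where $\kappa=1$ for Z2/CBM and $\kappa=2$ for SBM.

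The hypothesis $m\le M\le\max\{1,C\sqrt{\num/\snr}\}$ is used exactly here: with $\eta=C\sqrt{1/(\num\snr)}$ one has $m-1<m\le C\sqrt{\num/\snr}=\kappa\eta\proxnum$ (indeed $\eta\proxnum=C\sqrt{\num/\snr}$ for Z2/CBM and $2\eta\proxnum=C\sqrt{\num/\snr}$ for SBM), hence $\tfrac{1}{\kappa}m(m-1)\snr\le\eta\,m\proxnum\snr$, so the ``defect'' produced by the free within-$\calM$ factors is absorbed by the $\eta$-margin built into $\theta_{m}$. Inserting the MGF bound and the definition of $\theta_{m}$ into the Chernoff bound and simplifying yields, for each fixed $\calM$ of size $m$,
\[
\P\Big(\sum_{i\in\calM}X_{i}>\theta_{m}\Big)\le\exp\Big(-(1+\eta)\,m\log\tfrac{\num e}{m}-\eta\,m\proxnum\snr\Big).
\]
A union bound over the $\binom{\num}{m}\le(\num e/m)^{m}$ sets of size $m$ cancels one factor of $m\log(\num e/m)$, leaving $\exp\big(-\eta\,m\log(\num e/m)-\eta\,m\proxnum\snr\big)$. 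Finally, since $x\mapsto x\log(\num e/x)$ is nondecreasing on $[1,\num]$ with value $1+\log\num$ at $x=1$, summing over $m=1,\dots,\lfloor M\rfloor$ (a geometric-type series thanks to the extra $-\eta\,m\proxnum\snr$, valid once $\num\snr\ge\consts$) is at most $2\exp\big(-\eta(1+\log\num+\proxnum\snr)\big)$; and by AM--GM, $\eta(1+\log\num+\proxnum\snr)\ge 2\eta\sqrt{(1+\log\num)\proxnum\snr}\ge\sqrt{2}\,C\sqrt{1+\log\num}\ge\sqrt{\log\num}$ once $C\ge 2\sqrt2$. This gives the asserted probability $1-3e^{-\sqrt{\log\num}}$, with slack in the constant.

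The step I expect to be most delicate is the per-model verification in the second paragraph: one must check, uniformly over the sign patterns $\labelstar_{i}\labelstar_{j}=\pm1$ and for all three noise laws, both that the cross/diagonal factor is $\le e^{-\snr}$ (resp.\ $=e^{-\snr/2}$) and --- the real subtlety --- that the \emph{doubled} within-$\calM$ factor is exactly $1$; for SBM this is entangled with the shift $\tune$ and relies on Fact~\ref{fact:SBM_tune_param_vs_p_q} together with the computations of Section~\ref{sec:preliminary}. The surrounding bookkeeping --- calibrating the $\eta$-terms and choosing $\consts$ large enough that both the geometric series and the AM--GM step go through --- is routine.
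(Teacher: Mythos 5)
Your proposal is correct and follows essentially the same route as the paper: a Chernoff bound at the tilt $t^{*}$, factorizing the MGF so that the within-$\calM$ (doubled-tilt) factors are exactly $1$ by the moment identities, absorbing the resulting $m(m-1)\snr$ defect via $m\le C\sqrt{\num/\snr}$ into the $\eta$-margin, and then union bounds over $\calM$ and $m$. The only differences are cosmetic bookkeeping (the paper groups duplicated entries into $\calJ,\calJ^{\complement}$ and, for SBM, tracks $m_{\inprob},m_{\outprob}$ counts rather than using the per-factor identity $e^{-\snr/2}$; and it handles the sum over $m$ by splitting at $m=\sqrt{\num}$ instead of your monotonicity-plus-AM--GM argument), which do not change the substance.
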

Set $C=\consteta:=\frac{\constpilot}{b_{\max}}$ and $\eta=C\sqrt{\frac{1}{\num\snr}}$.
Note that Lemma \ref{lem:pilot_bound} ensures that $\beta_{0}\le\beta\le\consteta\sqrt{\frac{\num}{\snr}}$
with high probability.\footnote{We assume that $\consteta\sqrt{\frac{\num}{\snr}}\ge1$ here; if this
is not true, we can skip to the proof under case $0<\beta\le1$ that
is presented later. } Therefore, applying Lemma \ref{lem:subset_sum_ineq} with $M=\consteta\sqrt{\frac{\num}{\snr}}$
and the above $C$, we obtain that with high probability,
\begin{align*}
0 & \le\beta_{0}\cdot\varrho\left(\beta_{0},2\right)+\constStwo\frac{1}{t^{*}}\beta_{0}\sqrt{\frac{1}{\num\snr}}\proxnum\snr\\
 & =\beta_{0}\cdot\varrho\left(\beta_{0},2\right)+\beta_{0}\frac{1}{t^{*}}\frac{\constStwo}{\consteta}\eta\proxnum\snr\\
 & =\beta_{0}\cdot\varrho\left(\beta_{0},2+\frac{\constStwo}{\consteta}\right),
\end{align*}
which implies $0\le\varrho\left(\beta_{0},2+\frac{\constStwo}{\consteta}\right)$.
Rearranging this inequality using the definition of $\varrho$, we
obtain 
\[
\beta_{0}\le e\num\cdot\exp\left[-\frac{1-(2+\constStwo/\consteta)\eta}{1+\eta}\proxnum\snr\right].
\]
To simplify the last RHS, we use the following elementary lemma.
\begin{lem}
\label{lem:explicit_const_helper}If $0\le\eta\le\frac{1}{C_{0}+1}$
for some $C_{0}>0$, then $\frac{1-C_{0}\eta}{1+\eta}\ge1-(C_{0}+1)\eta\ge0$.
\end{lem}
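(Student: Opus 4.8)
The plan is to verify the two asserted inequalities separately, each by a short algebraic manipulation. For the right-hand inequality $1-(C_{0}+1)\eta\ge0$, I would simply observe that it is equivalent to $\eta\le\frac{1}{C_{0}+1}$, which is exactly the hypothesis; so this bound is immediate and is in fact the only place the upper bound on $\eta$ is used.

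For the left-hand inequality $\frac{1-C_{0}\eta}{1+\eta}\ge1-(C_{0}+1)\eta$, I would first note that $\eta\ge0$ forces $1+\eta>0$, so the inequality is equivalent, after clearing the denominator, to $1-C_{0}\eta\ge\big(1-(C_{0}+1)\eta\big)(1+\eta)$. Expanding the right-hand side gives $1+\eta-(C_{0}+1)\eta-(C_{0}+1)\eta^{2}=1-C_{0}\eta-(C_{0}+1)\eta^{2}$, so after cancelling the common term $1-C_{0}\eta$ the claim reduces to $0\ge-(C_{0}+1)\eta^{2}$, i.e.\ to $(C_{0}+1)\eta^{2}\ge0$, which holds trivially since $C_{0}>0$ and $\eta\ge0$. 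Notably this part of the argument does not even require $\eta\le\frac{1}{C_{0}+1}$; it is valid for every $\eta\ge0$.

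There is no real obstacle in this lemma — it is a one-line computation — so the only thing to be careful about is combining the two pieces correctly: chaining the two displayed inequalities produces $\frac{1-C_{0}\eta}{1+\eta}\ge1-(C_{0}+1)\eta\ge0$, which is the claimed statement.
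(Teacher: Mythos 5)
Your proposal is correct and amounts to the same one-line algebraic verification as the paper: the paper rewrites $\frac{1-C_{0}\eta}{1+\eta}=1-\frac{(1+C_{0})\eta}{1+\eta}$ and bounds the fraction using $1+\eta\ge1$, while you clear the denominator and reduce to $(C_{0}+1)\eta^{2}\ge0$; both use $\eta\ge0$ for the first inequality and $\eta\le\frac{1}{C_{0}+1}$ only for the second. No gap.
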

\begin{proof}
We have $\frac{1-C_{0}\eta}{1+\eta}=1-\frac{(1+C_{0})\eta}{1+\eta}\overset{(a)}{\ge}1-(1+C_{0})\eta\overset{(b)}{\ge}0,$
where step $(a)$ holds since $\eta\ge0$ and step $(b)$ holds since
$\eta\le\frac{1}{C_{0}+1}$.
\end{proof}
The premise of Theorem~\ref{thm:SDP_error}, i.e., $\num\snr\ge\consts$
for $\consts$ sufficiently large, implies that $\eta\le\frac{1}{C_{0}+1}$
for $C_{0}:=2+\frac{\constStwo}{\consteta}$. Applying the above lemma
gives $\frac{1-C_{0}\eta}{1+\eta}\ge1-(C_{0}+1)\eta$. Combining with
the last displayed equation, we obtain that
\begin{align*}
\beta_{0} & \le e\num\cdot\vartheta\Big(1+C_{0}\Big)\implies\beta_{0}\le\left\lfloor e\num\cdot\vartheta\Big(1+C_{0}\Big)\right\rfloor 
\end{align*}
since $\beta_{0}$ is an integer. Because $\frac{\error}{\num}=b_{\max}\cdot\beta\le b_{\max}\cdot2\beta_{0}$,
it follows that
\[
\frac{\error}{\num}\le2b_{\max}\left\lfloor e\num\cdot\vartheta\Big(1+C_{0}\Big)\right\rfloor \le\left\lfloor 2b_{\max}e\num\cdot\vartheta\Big(1+C_{0}\Big)\right\rfloor ,
\]
where in the last step we use the fact that $c\left\lfloor x\right\rfloor \le\left\lfloor cx\right\rfloor $
for any real number $x\ge0$ and integer $c>0$.\footnote{Proof: we have $\left\lfloor cx\right\rfloor =\left\lfloor c\left\lfloor x\right\rfloor +(cx-c\left\lfloor x\right\rfloor )\right\rfloor \ge\left\lfloor c\left\lfloor x\right\rfloor \right\rfloor =c\left\lfloor x\right\rfloor $
by noting that $cx-c\left\lfloor x\right\rfloor \ge0$ and $c\left\lfloor x\right\rfloor $
is an integer.} As long as $\num\snr\ge1$, we have $\frac{1}{\num\snr}\le\sqrt{\frac{1}{\num\snr}}$
and hence
\[
\frac{\error}{\num}\le\left\lfloor \num\cdot\exp\Big(\log(2b_{\max}e)\Big)\cdot\vartheta\Big(1+C_{0}\Big)\right\rfloor \le\left\lfloor \num\cdot\vartheta\left(1+C_{0}+\frac{1}{\consteta}\log(2b_{\max}e)\right)\right\rfloor .
\]
Recalling the definition of $\vartheta$, we see that we have proved
the first inequality in Theorem~\ref{thm:SDP_error}. 

\subsubsection*{Case 2: $0<\beta\le1$.}

In this case, continuing from the expression (\ref{eq:S1_as_sum}),
we have 
\[
S_{1}\le b_{\max}\beta\cdot X_{(1)}\le b_{\max}\beta\cdot\varrho\left(1,2\right),
\]
where in the last step we apply Lemma \ref{lem:subset_sum_ineq} with
$m=M=1$, $C=2\sqrt{2}$ and $\eta=C\sqrt{\frac{1}{\num\snr}}$. Combining
with Equation (\ref{eq:gamma_bound}), we obtain that
\begin{align*}
0 & \le b_{\max}\beta\cdot\varrho\left(1,2\right)+\constStwo\frac{\error}{\num}\frac{1}{t^{*}}\sqrt{\frac{1}{\num\snr}}\proxnum\snr\\
 & =b_{\max}\beta\cdot\varrho\left(1,2\right)+b_{\max}\beta\frac{1}{t^{*}}\frac{\constStwo}{2\sqrt{2}}\eta\proxnum\snr\\
 & =b_{\max}\beta\cdot\varrho\left(1,2+\frac{\constStwo}{2\sqrt{2}}\right),
\end{align*}
which implies $\varrho\left(1,2+\frac{\constStwo}{2\sqrt{2}}\right)\ge0$.
Rearranging this inequality using the definition of $\varrho$, we
obtain 
\[
1\le e\num\cdot\exp\left[-\frac{1-(2+\constStwo/2\sqrt{2})\eta}{1+\eta}\proxnum\snr\right].
\]
Applying Lemma \ref{lem:explicit_const_helper} with $C_{0}=2+\frac{\constStwo}{2\sqrt{2}}$
gives $\frac{1-C_{0}\eta}{1+\eta}\ge1-(C_{0}+1)\eta$. Combining with
the last displayed equation, we obtain
\[
1\le e\num\cdot\vartheta\left(C_{0}+1\right)\implies1\le\left\lfloor e\num\cdot\vartheta\left(C_{0}+1\right)\right\rfloor .
\]
But we have $\frac{\error}{b_{\max}\num}=\beta\le1$ by the case assumption.
It follows that 
\[
\frac{\error}{\num}\le b_{\max}\left\lfloor e\num\cdot\vartheta\left(C_{0}+1\right)\right\rfloor \le\left\lfloor b_{\max}e\num\cdot\vartheta\left(C_{0}+1\right)\right\rfloor ,
\]
where in the last step we use the fact that $c\left\lfloor x\right\rfloor \le\left\lfloor cx\right\rfloor $
for any real $x\ge0$ and integer $c>0$. As long as $\num\snr\ge1$,
we have  $\frac{1}{\num\snr}\le\sqrt{\frac{1}{\num\snr}}$ and hence
\[
\frac{\error}{\num}\le\left\lfloor \num\cdot\exp\Big(\log(b_{\max}e)\Big)\cdot\vartheta\Big(1+C_{0}\Big)\right\rfloor \le\left\lfloor \num\cdot\vartheta\left(1+C_{0}+\frac{1}{2\sqrt{2}}\log(b_{\max}e)\right)\right\rfloor .
\]
Recalling the definition of $\vartheta$, we see that we have proved
the first inequality in Theorem~\ref{thm:SDP_error}.

\section{Proofs of Technical Lemmas in Section~\ref{sec:proof_SDP_error_rate}}

\subsection{Proof of Lemma \ref{lem:basic_inequality}\label{sec:proof_basic_inequality}}

Recall the matrices $\Noise$ and $\D$ defined in Section \ref{sec:SDP_proof_prelim}.
Let $\d:=(D_{11},\ldots,D_{\num\num})\t\in\real^{\num}$ be the vector
of diagonal entries of $\D$. Note that $\D\LabelStar=\LabelStar\circ\d=\LabelStar\circ\left(\LabelStar\circ(\Adj^{0}\LabelStar)\right)=\Adj^{0}\LabelStar,$
where $\circ$ denotes element-wise product. Therefore, we have the
identity
\[
\D\Ystar=\D\LabelStar\left(\LabelStar\right)\t=\Adj^{0}\LabelStar\left(\LabelStar\right)\t=\Adj^{0}\Ystar.
\]

To prove the basic inequality, let us fix an arbitrary $\Yhat\in\betterset(\A)$
and observe that $0\le\left\langle \Adj,\Yhat-\Ystar\right\rangle $.
On the other hand, we have 
\begin{align*}
\left\langle \Adj,\Yhat-\Ystar\right\rangle  & =\left\langle \Adj^{0}-\D,\Yhat-\Ystar\right\rangle 
\end{align*}
thanks to the following facts: ($i$) $\Yhat-\Ystar$ has zero diagonal
and $\D$ is a diagonal matrix; ($ii$) for Models~\ref{mdl:Z2}
and~\ref{mdl:CBM} we have $\Adj^{0}=\Adj$; ($iii$) for Model~\ref{mdl:SBM}
we have $\Adj^{0}=\Adj-\tune\OneMat$ but the program (\ref{eq:SBM_SDP})
used for this model ensures that $\left\langle \OneMat,\Yhat\right\rangle =\left\langle \OneMat,\Ystar\right\rangle =0$.
Using the equality $\D\Ystar=\Adj^{0}\Ystar$ proved above, we obtain
that
\begin{align*}
\left\langle \Adj,\Yhat-\Ystar\right\rangle = & \left\langle \Adj^{0}-\D,\Yhat\right\rangle =\left\langle \Adj^{0}-\D,\PTperp(\Yhat)\right\rangle +\left\langle \Adj^{0}-\D,\PT(\Yhat)\right\rangle .
\end{align*}
By definition of $\PT$, we can write $\PT(\Yhat)=\LabelStar\u\t+\v\left(\LabelStar\right)\t$
for some $\u,\v\in\real^{\num}$, hence 
\[
\left\langle \Adj^{0}-\D,\PT(\Yhat)\right\rangle =\left\langle \Adj^{0}-\D,\LabelStar\u\t+\v\left(\LabelStar\right)\t\right\rangle =0
\]
because $\D\LabelStar=\Adj^{0}\LabelStar$. It follows that
\begin{align*}
\left\langle \Adj,\Yhat-\Ystar\right\rangle  & =\left\langle \Adj^{0}-\D,\PTperp(\Yhat)\right\rangle =\left\langle (\E\Adj-\tune\OneMat)+\Noise-\D,\PTperp(\Yhat)\right\rangle .
\end{align*}
We shall prove later that
\begin{equation}
\left\langle \E\Adj-\tune\OneMat,\PTperp(\Yhat)\right\rangle =0.\label{eq:SBM_EA-lambdaJ}
\end{equation}
Taking this identity as given, we obtain $0\le\left\langle \Adj,\Yhat-\Ystar\right\rangle \le\left\langle \Noise-\D,\PTperp(\Yhat)\right\rangle $,
thereby completing the proof of Lemma \ref{lem:basic_inequality}.
\begin{proof}[Proof of inequality (\ref{eq:SBM_EA-lambdaJ})]
 Under Models \ref{mdl:Z2} and \ref{mdl:CBM}, we have $\E\Adj=c\Ystar$
for some scalar $c$ as well as  $\tune=0$ as chosen in (\ref{eq:def_tune_param}),
hence 
\[
\left\langle \E\Adj-\tune\OneMat,\PTperp(\Yhat)\right\rangle =c\left\langle \PTperp(\Ystar),\Yhat\right\rangle =0
\]
as desired. Under Model \ref{mdl:SBM}, we have the equalities
\begin{align*}
\left\langle \E\Adj-\tune\OneMat,\PTperp\left(\Y\right)\right\rangle  & \overset{(a)}{=}\left\langle \PTperp\left(\frac{\inprob-\outprob}{2}\Ystar+\frac{\inprob+\outprob}{2}\OneMat-\tune\OneMat\right),\Yhat\right\rangle \\
 & \overset{(b)}{=}\left\langle \frac{\inprob+\outprob}{2}\OneMat-\tune\OneMat,\Yhat\right\rangle \overset{(c)}{=}0,
\end{align*}
where step $(a)$ holds since $\E\Adj=\frac{\inprob-\outprob}{2}\Ystar+\frac{\inprob+\outprob}{2}\OneMat$
and the projection $\PTperp$ is self-adjoint, step $(b)$ holds since
$\PTperp(\Ystar)=0$ and $\PTperp(\OneMat)=\OneMat$ (because $(\IdMat-\U\U\t)\OneMat=(\IdMat-\num^{-1}\Ystar)\OneMat=\OneMat$),
and step $(c)$ holds since $\left\langle \OneMat,\Yhat\right\rangle =0$
by feasibility of the matrix $\Yhat\in\betterset(\A)$ to the program~(\ref{eq:SBM_SDP}).
\end{proof}

\subsection{Proof of Proposition \ref{prop:S2} for Model \ref{mdl:Z2} (Z2)
\label{sec:proof_prop_Z2_S2}}

We shall make use of the following matrix inequality: 
\begin{equation}
\left\langle \G,\M\right\rangle \le\opnorm{\G}\Tr(\M),\quad\forall\G,\forall\M\succeq0,\label{eq:holder}
\end{equation}
which is due to the fact that for $\M\succeq0$, $\Tr(\M)$ equals
the sum of its singular values. We also need the following spectral
norm bound, which is from a direct application of~\cite[Theorem 2.11]{davidson2001local}. 
\begin{lem}
\label{lm:Z2_spectral_bound_W} We have $\opnorm{\Noise}\leq(2+\sqrt{2})\sqrt{\std^{2}\num}$
with probability at least $1-e^{-\num/2}$. 
\end{lem}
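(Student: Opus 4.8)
The plan is to recognize $\Noise$ as a scaled symmetric Gaussian random matrix and then combine an estimate for its expected operator norm with Gaussian concentration of measure. First I would note that under Model~\ref{mdl:Z2} we have $\E\Adj=\Ystar$ (this holds on the diagonal too, since $\adj_{ii}\sim N(1,\std^2)$ and $\ystar_{ii}=1$), so that $\Noise=\Adj-\Ystar$ is symmetric with entries $\{\Noise_{ij}:i\le j\}$ mutually independent and each distributed as $N(0,\std^2)$. Thus $\tfrac{1}{\std}\Noise$ is a symmetric Gaussian matrix with unit-variance entries, and the standard estimate for this ensemble, which is what \cite[Theorem~2.11]{davidson2001local} provides (for the symmetric ensemble, after the usual comparison with the associated rectangular Gaussian matrix), gives $\E\,\opnorm{\Noise}\le 2\sqrt{\std^2\num}$.

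Next I would apply the Tsirelson--Ibragimov--Sudakov Gaussian concentration inequality. Viewing $\opnorm{\Noise}$ as a function of the independent Gaussian coordinates $\{\Noise_{ij}\}_{i\le j}$, this function is $\sqrt{2}$-Lipschitz with respect to their Euclidean norm: indeed $|\opnorm{\M}-\opnorm{\M'}|\le\opnorm{\M-\M'}\le\norm[\M-\M']F$, and for symmetric $\M$ one has $\norm[\M]F^2=\sum_i M_{ii}^2+2\sum_{i<j}M_{ij}^2\le 2\sum_{i\le j}M_{ij}^2$. Hence, for every $u>0$,
\[
\P\left(\opnorm{\Noise}\ge\E\,\opnorm{\Noise}+u\right)\le\exp\!\left(-\frac{u^2}{4\std^2}\right).
\]
Choosing $u=\sqrt{2\std^2\num}$ makes the right-hand side equal to $e^{-\num/2}$, and on the complementary event $\opnorm{\Noise}\le 2\sqrt{\std^2\num}+\sqrt{2\std^2\num}=(2+\sqrt{2})\sqrt{\std^2\num}$, which is the claimed bound.

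The Lipschitz computation and the choice of $u$ are routine; the one step demanding care is the bound $\E\,\opnorm{\Noise}\le 2\sqrt{\std^2\num}$, since \cite[Theorem~2.11]{davidson2001local} is typically phrased for rectangular matrices with i.i.d.\ entries rather than for the symmetric ensemble. I would handle this either by quoting a symmetric version directly (if available there) or by a short comparison argument controlling the symmetric Gaussian matrix in operator norm by a constant multiple of a rectangular Gaussian matrix, taking care that the constants still combine to give exactly $2+\sqrt{2}$.
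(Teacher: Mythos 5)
Your proof is correct and is in substance what the paper's one-line citation to \cite[Theorem 2.11]{davidson2001local} is doing: that theorem is itself an expected-norm estimate combined with Gaussian concentration, and your decomposition $2\sqrt{\std^{2}\num}+\sqrt{2\std^{2}\num}$ at deviation probability $e^{-\num/2}$ reproduces the constant $2+\sqrt{2}$ exactly. One caution: your fallback of dominating the symmetric ensemble by a constant multiple of a rectangular Gaussian matrix would inflate the expectation term and lose the constant, so you do need the symmetric bound $\E\,\opnorm{\Noise}\le 2\sqrt{\std^{2}\num}$ directly (e.g., by Slepian/Sudakov--Fernique comparison, or by Jensen after noting $\Noise$ is a conditional expectation of a GOE-type matrix with inflated diagonal), as in your primary route.
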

Turning to bounding $S_{2}$, we have with probability at least $1-e^{-\num/2}$,
\begin{align*}
S_{2}:=\left\langle \Noise,\PTperp(\Yhat)\right\rangle  & \overset{(a)}{\le}\opnorm{\Noise}\cdot\Tr\left[\PTperp(\Yhat)\right]\\
 & \overset{(b)}{\le}4\std\sqrt{\num}\cdot\frac{\error}{\num}\\
 & \overset{(c)}{=}4\sqrt{2}\frac{\error}{\num}\sqrt{\frac{1}{\num\snr}}\frac{\num\snr}{t^{*}},
\end{align*}
where step $(a)$ follows from noting that $\PTperp(\Yhat)\succeq0$
by Fact~\ref{fact:Tr(PTperp(Yhat))} and applying inequality~(\ref{eq:holder}),
step $(b)$ holds by Lemma~\ref{lm:Z2_spectral_bound_W} and Fact~\ref{fact:Tr(PTperp(Yhat))},
and step $(c)$ holds by definitions of $\snr$ in Equation~(\ref{eq:snr})
and $t^{*}$ in Equation~(\ref{eq:def_tstar}). Setting $\constStwo=4\sqrt{2}$
completes the proof of Proposition~\ref{prop:S2} for Z2.

\subsection{Proof of Proposition \ref{prop:S2} for Mode \ref{mdl:CBM} (CBM)
\label{sec:proof_CBM_prop_S2}}

Recall that $S_{2}:=\left\langle \PTperp(\Yhat),\Noise\right\rangle .$
We control the right hand side by splitting it into two parts, one
involving a trimmed version of $\W$ and the other the residual. This
technique is similar to those in \cite{fei2019exponential,zhang2017theoretical},
but here we use it for a different model.

\paragraph{Trimming.}

We consider an equivalent way of generating $\Adj$ under Mode \ref{mdl:CBM}
(CBM). Define a symmetric random matrix $\G\in\left\{ \pm1\right\} ^{\num\times\num}$
such that $G_{ii}=0$ for $i\in\left[\num\right]$ and $\left\{ G_{ij}:i<j\right\} $
are generated independently as
\[
G_{ij}=\begin{cases}
\ystar_{ij}, & \text{w.p.\ }1-\flipprob,\\
-\ystar_{ij}, & \text{w.p.\ }\flipprob.
\end{cases}
\]
The observed matrix $\Adj$ from Model \ref{mdl:CBM} can be equivalently
generated by
\[
\adj_{ij}=\begin{cases}
G_{ij}, & \text{w.p.\ }\obsprob,\\
0 & \text{w.p.\ }1-\obsprob,
\end{cases}\qquad\text{independently for \ensuremath{i<j}},
\]
with $\adj_{ii}=0$ for $i\in[\num]$.\footnote{As mentioned in Section~\ref{sec:setup_model}, it is inconsequential
to change the diagonal entries of $\A$.} We introduce a few additional notations. For a vector $\v\in\real^{\num}$,
we let $\norm[\v]0$ denote the number of nonzero entries in $\v$.
For a matrix $\M\in\real^{\num\times\num}$, we let $\M^{\text{up}}$
be obtained from $\M$ by zeroing out its lower triangular entries,
$\M_{i,:}$ and $\M_{:,j}$ be the $i$-th row and $j$-th column
of $\M$ respectively, and we define the trimmed matrix $\widetilde{\M}\coloneqq\left(M_{ij}\indic\{\norm[\M_{i,:}]0\le40\obsprob\num,\norm[\M_{:,j}]0\le40\obsprob\num\}\right)_{i,j\in[\num]}$.

With the above notations, we note that $\Adj^{\text{up}}$ and $\G^{\text{up}}$
are both matrices with independent entries. We first record a series
of lemmas that are useful for our proof to follow.
\begin{lem}
\label{lm:CBM_trimmed_spectral_bound}For some absolute constant
$C>0$, we have 
\[
\P\left(\opnorm{\widetilde{\Adj^{\text{up}}}-\obsprob\G^{\text{up}}}\ge C\sqrt{\obsprob\num}\right)\le\frac{1}{\num^{3}}.
\]
\end{lem}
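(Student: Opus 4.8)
The plan is to exploit the two--stage description of Model~\ref{mdl:CBM} recalled just above: the \emph{support} $\Omega\coloneqq\{(i,j):i<j,\ \adj_{ij}\ne0\}$ of $\Adj^{\text{up}}$ is a function of the $\Bern(\obsprob)$ sampling variables alone, hence is independent of the sign matrix $\G$, whereas conditionally on $\G$ the entries $\{\adj_{ij},\,i<j\}$ are independent, take values in $\{0,G_{ij}\}$, and have conditional mean $\obsprob G_{ij}$. Thus $\M\coloneqq\Adj^{\text{up}}-\obsprob\G^{\text{up}}$ is, given $\G$, a centered random matrix with independent entries satisfying $|M_{ij}|\le1$ and $\E[M_{ij}^{2}\mid\G]=\obsprob(1-\obsprob)\le\obsprob$, and with zero diagonal. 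Let $R,S\subseteq[\num]$ be the sets of rows, respectively columns, of $\Adj^{\text{up}}$ containing more than $40\obsprob\num$ nonzeros, and let $P_{R^{c}},P_{S^{c}}$ be the coordinate projections onto the light rows/columns, so that $\widetilde{\Adj^{\text{up}}}=P_{R^{c}}\Adj^{\text{up}}P_{S^{c}}$ (note $R,S$ depend only on $\Omega$). I would decompose
\[
\widetilde{\Adj^{\text{up}}}-\obsprob\G^{\text{up}}
=\underbrace{P_{R^{c}}\M P_{S^{c}}}_{=:\,\mathrm{(I)}}
+\underbrace{P_{R^{c}}(\obsprob\G^{\text{up}})P_{S^{c}}-\obsprob\G^{\text{up}}}_{=:\,\mathrm{(II)}} ,
\]
and bound the two terms separately. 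The large operator norm of $\obsprob\G^{\text{up}}$ itself (which is of order $\obsprob\num$) is irrelevant here: it is exactly the conditional mean of $\widetilde{\Adj^{\text{up}}}$ given $\G$, so it cancels against the corresponding part of $\widetilde{\Adj^{\text{up}}}$, leaving the centered matrix $\M$ in term (I).

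For term (II), observe that $\obsprob\G^{\text{up}}-P_{R^{c}}(\obsprob\G^{\text{up}})P_{S^{c}}=\obsprob\big(P_{R}\G^{\text{up}}+P_{R^{c}}\G^{\text{up}}P_{S}\big)$ involves only the heavy rows and heavy columns of $\Adj^{\text{up}}$; since each row and each column of $\G^{\text{up}}$ has at most $\num$ entries, all of modulus $1$, a Frobenius--norm bound gives $\opnorm{\mathrm{(II)}}\le\obsprob\big(\sqrt{|R|\num}+\sqrt{|S|\num}\big)$. It therefore suffices to show $|R|,|S|\lesssim1/\obsprob$ with probability at least $1-\tfrac14\num^{-3}$. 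For this I would use a Chernoff bound: the number of nonzeros in row $i$ of $\Adj^{\text{up}}$ is $\mathrm{Bin}(\num-i,\obsprob)$ with mean at most $\obsprob\num$, the events $\{i\in R\}$ are independent (distinct rows of $\Adj^{\text{up}}$ involve disjoint Bernoulli variables), so $\E|R|\le\num e^{-c_{2}\obsprob\num}\le c_{3}/\obsprob$ for absolute constants $c_{2},c_{3}>0$ (using $xe^{-c_{2}x}\le(ec_{2})^{-1}$); a further Bernstein step then gives $|R|\lesssim1/\obsprob$ with the stated probability, and likewise for $|S|$. Here I use that $\num\obsprob\gtrsim1$ under the hypotheses of Theorem~\ref{thm:SDP_error} (recall $\snr\asymp\obsprob$ for CBM). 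Substituting, $\opnorm{\mathrm{(II)}}\lesssim\sqrt{\obsprob\num}$.

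For term (I), I would invoke a \emph{regularized} spectral--norm bound for centered sparse random matrices: for a random matrix with independent, mean--zero, $[-1,1]$--valued entries of variance at most $\obsprob$ whose ``large'' entries sit on a pattern with row/column sums $O(\obsprob\num)$ after removing a bad vertex set, zeroing out those bad rows and columns yields spectral norm $O(\sqrt{\obsprob\num})$ with probability at least $1-\tfrac14\num^{-3}$. Applied conditionally on $\G$ to $\M$ with bad set $R\cup S$ (and, if convenient, after passing to the symmetric $2\num\times2\num$ dilation $\left(\begin{smallmatrix}0&\M\\\M^{\top}&0\end{smallmatrix}\right)$ to match the symmetric independent--entries setting), this gives $\opnorm{\mathrm{(I)}}=\opnorm{P_{R^{c}}\M P_{S^{c}}}\lesssim\sqrt{\obsprob\num}$. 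Such a bound can be cited directly from the random--graph regularization literature (Feige--Ofek, or Le--Levina--Vershynin, or the variants used in the clustering literature), or proved from scratch by an $\varepsilon$--net argument in the style of Kahn--Szemer\'edi: cover the sphere, bound $x^{\top}P_{R^{c}}\M P_{S^{c}}y$ over the net by Bernstein after splitting the sum into ``light couples'' (handled by the union bound over the net) and ``heavy couples'' (handled by a discrepancy bound on the number of elements of $\Omega$ lying between any two vertex subsets, which is exactly what the degree truncation supplies). Combining (I) and (II) and taking a union bound over the two failure events yields $\opnorm{\widetilde{\Adj^{\text{up}}}-\obsprob\G^{\text{up}}}\le C\sqrt{\obsprob\num}$ with probability at least $1-\num^{-3}$ for a suitable absolute constant $C$.

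The hard part is term (I) in the genuinely sparse regime $\obsprob\num=\Theta(1)$. There, off--the--shelf matrix concentration (matrix Bernstein, noncommutative Khintchine, or even the sharp Bandeira--van Handel bound) gives only $\opnorm{\M}\lesssim\sqrt{\obsprob\num}+\sqrt{\log\num}$, and the $\sqrt{\log\num}$ overshoot is genuine---it is produced by the $\Theta(\log\num/\log\log\num)$--degree vertices of $\Omega$ and is precisely what the trimming in $\widetilde{\Adj^{\text{up}}}$ is designed to remove. So the crux is establishing (or correctly importing) the regularized spectral bound; a secondary subtlety is that the trimming pattern $(R,S)$ is correlated with the entries of $\M$, which is why I would run the whole argument conditionally on $\G$, using the independence $\Omega\perp\G$.
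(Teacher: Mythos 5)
Your proposal is sound, but it is organized differently from the paper's proof, which is essentially a one-line citation: the paper conditions on $\G$ (exactly your observation that the censoring pattern is independent of the signs and that $\norm[\G^{\text{up}}]{\infty}\le1$) and then applies Lemma~3.2 of Keshavan--Montanari--Oh directly to the trimmed sampled matrix, since that lemma already bounds $\opnorm{\widetilde{\Adj^{\text{up}}}-\obsprob\G^{\text{up}}}$ in one stroke---trimming correction included---with failure probability $\num^{-3}$. You instead split off the trimmed-mean correction $P_{R^{c}}(\obsprob\G^{\text{up}})P_{S^{c}}-\obsprob\G^{\text{up}}$, control it by a Frobenius bound plus a Chernoff count of heavy rows and columns, and then import (or re-prove via Kahn--Szemer\'edi) a Feige--Ofek/Le--Levina--Vershynin-type regularized bound for the centered part $P_{R^{c}}\M P_{S^{c}}$. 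That route works and is arguably more transparent about where $\sqrt{\obsprob\num}$ comes from, but it does not avoid the crux: the regularized spectral bound you invoke for term (I) is the same kind of external ingredient the paper cites, so you trade one citation for another plus extra bookkeeping. Two small points of care. First, conditioning on $\G$ gives you independent entries, but it does \emph{not} decouple the trimming sets $(R,S)$ from the entries of $\M$---those sets are functions of the very sampling variables generating $\M$; that correlation is absorbed by the degree-truncation hypothesis of the regularized bound (or by the discrepancy step in Kahn--Szemer\'edi), not by the conditioning, so your closing remark slightly misattributes what the conditioning buys. Second, your heavy-row count (and hence the term-(II) bound) uses $\num\obsprob\gtrsim1$, which the lemma as stated does not assume; this is harmless where the lemma is applied (the paper verifies $\obsprob\ge C/\num$ there), and the complementary regime is trivial anyway since when $40\obsprob\num<1$ every row containing a nonzero entry is trimmed and $\opnorm{\widetilde{\Adj^{\text{up}}}-\obsprob\G^{\text{up}}}\le\obsprob\num\le\sqrt{\obsprob\num}$, but you should say so if you want the statement in its unconditional form.
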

\begin{proof}
Note that $\norm[\G^{\text{up}}]{\infty}\le1$ surely. Applying \cite[Lemma 3.2]{keshavan2009matrix}
with $m$ and $\epsilon$ therein set to $\num$ and $\obsprob\num$,
we obtain $\P(\opnorm{\widetilde{\Adj^{\text{up}}}-\obsprob\G^{\text{up}}}\ge C\sqrt{\obsprob\num}\mid\G^{\text{up}})\le\frac{1}{\num^{3}}$
for any $\G^{\text{up}}$.\footnote{\cite[Lemma 3.2]{keshavan2009matrix} involves trimming rows and columns
that contain more than $2\obsprob\num$ nonzero entries. A closer
inspection of their proof reveals that their bound still applies to
our setting, albeit with a possibly larger constant $C$. } Integrating out $\G^{\text{up}}$ yields the result.  
\end{proof}
\begin{lem}
\label{lem:subgaussian_matrix_spectral_bound}Let $\M\in\real^{\num\times\num}$
be a random matrix whose entries $M_{ij}$ are independent mean-zero
random variables with $\left|M_{ij}\right|\le C'$ for some constant
$C'\ge0$. Then there exists a constant $C>0$ such that with probability
at least $1-2e^{-\num}$, we have $\opnorm{\M}\le C\sqrt{\num}.$
\end{lem}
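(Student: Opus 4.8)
The plan is to prove this by the classical $\varepsilon$-net argument together with Hoeffding's inequality. If $C' = 0$ then $\M = \mathbf{0}$ and there is nothing to prove, so assume $C' > 0$.

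First I would fix a $\tfrac14$-net $\mathcal{N}$ of the Euclidean unit sphere $S^{\num-1}\subset\real^{\num}$ with $\lvert\mathcal{N}\rvert\le 9^{\num}$; such a net exists by a standard volume comparison. A routine covering estimate then reduces the operator norm to a maximum over the net: $\opnorm{\M}\le 2\max_{\u,\v\in\mathcal{N}}\langle\M\u,\v\rangle$. Indeed, choosing unit vectors $\x,\z$ with $\langle\M\x,\z\rangle=\opnorm{\M}$ and $\u,\v\in\mathcal{N}$ with $\lVert\x-\u\rVert_2,\lVert\z-\v\rVert_2\le\tfrac14$, one has $\langle\M\x,\z\rangle-\langle\M\u,\v\rangle=\langle\M\x,\z-\v\rangle+\langle\M(\x-\u),\v\rangle\le\tfrac12\opnorm{\M}$, so $\langle\M\u,\v\rangle\ge\tfrac12\opnorm{\M}$.

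Next, for a fixed pair $\u,\v\in S^{\num-1}$ I would write $\langle\M\u,\v\rangle=\sum_{i,j\in[\num]}u_iv_jM_{ij}$, a sum of independent mean-zero random variables each bounded in absolute value by $C'\lvert u_iv_j\rvert$. Since $\sum_{i,j}u_i^2v_j^2=\lVert\u\rVert_2^2\lVert\v\rVert_2^2=1$, Hoeffding's inequality gives $\P\bigl(\langle\M\u,\v\rangle>t\bigr)\le\exp\bigl(-t^2/(2(C')^2)\bigr)$ for every $t>0$. Taking a union bound over the at most $81^{\num}$ pairs $(\u,\v)\in\mathcal{N}\times\mathcal{N}$ and setting $t=C_0\sqrt{\num}$ with $C_0:=C'\sqrt{2(\log 81+1)}$ (so that $t^2/(2(C')^2)=(\log 81+1)\num$), the probability that $\max_{\u,\v\in\mathcal{N}}\langle\M\u,\v\rangle>t$ is at most $81^{\num}\exp\bigl(-t^2/(2(C')^2)\bigr)=\exp\bigl(\num\log 81-\num(\log 81+1)\bigr)=e^{-\num}$. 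Combining this with the net reduction gives $\opnorm{\M}\le 2C_0\sqrt{\num}=:C\sqrt{\num}$ with probability at least $1-e^{-\num}\ge 1-2e^{-\num}$, which is the claim.

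The argument above is essentially bookkeeping; the one place that needs attention is that the union bound runs over a net of exponential size $81^{\num}$, so the constant $C$ must be chosen proportional to $C'$ (not as a universal constant) to overcome this factor against the subgaussian tail. For the same reason, the matrix Bernstein inequality alone is inadequate here: at the confidence level $1-e^{-\num}$ it would only yield $\opnorm{\M}\lesssim\num$, whereas a net/chaining-type argument is needed to obtain the correct $\sqrt{\num}$ scaling. Alternatively, one could simply invoke a standard operator-norm bound for random matrices with independent, bounded entries.
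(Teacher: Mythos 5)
Your proof is correct and is essentially the paper's route: the paper simply cites a standard bound (Theorem 4.4.5 in Vershynin's book, applied with $K\lesssim C'$), which is proved by exactly the $\tfrac14$-net plus Hoeffding/sub-gaussian tail argument you wrote out, including the dependence of the final constant on $C'$. Your side remark that a matrix-Bernstein-only argument would not give the $\sqrt{\num}$ scaling at confidence $1-e^{-\num}$ is also accurate, but nothing beyond the cited standard result is needed.
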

\begin{proof}
Such a result is standard. For example, it follows as a corollary
of Theorem 4.4.5 in \cite{vershynin2018HDP} with $m=\num$, $t=\sqrt{\num}$
and $K\le C''$ for some constant $C'$ since $\norm[\M]{\infty}\le C'$. 
\end{proof}
\begin{lem}
\label{lm:untrimmed_L1_bound}Let $\M\in\{0,1\}^{\num\times\num}$
be a binary matrix with $M_{ii}=0$ for all $i\in[\num]$, and $\{M_{ij}\}_{i,j\in[\num]}$
being independent Bernoulli random variables. Let $\inprob'\coloneqq\max_{ij}\E M_{ij}$.
Define $\calT\coloneqq\{i\in[\num]:\sum_{j}M_{ij}\ge40\num\inprob'\}$,
$Z_{i}\coloneqq\sum_{j}\left|M_{ij}-\E M_{ij}\right|\indic\{i\in\calT\}$
and $Z'_{i}\coloneqq\sum_{j}M_{ij}\indic\{i\in\calT\}$. If $\inprob'\ge\frac{C}{\num}$
for a sufficiently large positive constant $C$, then with probability
at least $1-\frac{1}{\sqrt{\num}}$, we have 
\[
\sum_{i}Z_{i}\le2\sum_{i}Z'_{i}\le40\num^{2}\inprob'e^{-5\num\inprob'}
\]
\end{lem}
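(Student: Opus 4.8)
The plan is to control the contributions of the "high-degree" rows indexed by $\calT$ by combining a union bound over which rows land in $\calT$ with a Chernoff tail estimate for row sums of independent Bernoulli variables. First I would handle the first inequality $\sum_i Z_i \le 2 \sum_i Z_i'$: since $Z_i = \sum_j |M_{ij} - \E M_{ij}| \indic\{i \in \calT\}$ and $|M_{ij} - \E M_{ij}| \le M_{ij} + \E M_{ij} \le M_{ij} + \inprob'$, summing over $j$ gives $Z_i \le (\sum_j M_{ij} + \num \inprob')\indic\{i \in \calT\}$; on the event $i \in \calT$ we have $\sum_j M_{ij} \ge 40 \num \inprob'$, so $\num \inprob' \le \frac{1}{40}\sum_j M_{ij}$ and hence $Z_i \le \frac{41}{40} Z_i' \le 2 Z_i'$. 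Summing over $i$ yields the first inequality deterministically.

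For the second inequality $\sum_i Z_i' \le 40 \num^2 \inprob' e^{-5\num\inprob'}$ with high probability, I would argue as follows. Write $R_i := \sum_j M_{ij}$, so $\E R_i \le \num \inprob'$, and note $\sum_i Z_i' = \sum_i R_i \indic\{R_i \ge 40\num\inprob'\}$. The key observation is that the event $\{i \in \calT\}$ is rare: by a multiplicative Chernoff bound for sums of independent Bernoullis, $\P(R_i \ge 40 \num\inprob') \le e^{-c \num\inprob'}$ for a suitable constant (taking the deviation parameter large, the exponent can be made at least, say, $-10\num\inprob'$, using $\P(R_i \ge (1+t)\mu) \le (e^t/(1+t)^{1+t})^\mu$ with $\mu \le \num\inprob'$ and $1+t \ge 40$). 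Then I would bound $\sum_i Z_i'$ in expectation: $\E \sum_i Z_i' = \sum_i \E[R_i \indic\{R_i \ge 40\num\inprob'\}]$, and a tail-integration / layer-cake estimate shows each summand is at most $C' \num\inprob' e^{-c'\num\inprob'}$ (the extra factor of $\num\inprob'$ from the magnitude of $R_i$ is absorbed because the exponential tail dominates; one can crudely bound $R_i \le \num$ and use $\num \cdot e^{-c\num\inprob'} \le C \num\inprob' e^{-c'\num\inprob'}$ when $\num\inprob' \ge C$ is large). Summing over $i$ gives $\E \sum_i Z_i' \lesssim \num^2 \inprob' e^{-c'\num\inprob'}$. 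Finally, to upgrade from expectation to a high-probability bound, I would apply Markov's inequality with the right constants: choosing the exponent in the Chernoff step large enough (e.g. ensuring $\E \sum_i Z_i' \le \frac{1}{\sqrt\num} \cdot 40\num^2\inprob' e^{-5\num\inprob'}$, which holds once the tail exponent beats $5\num\inprob'$ by a $\log\num$-type margin, using $\num\inprob' \ge C$ large), Markov gives $\P(\sum_i Z_i' > 40\num^2\inprob' e^{-5\num\inprob'}) \le \frac{1}{\sqrt\num}$, hence the same bound for $\frac12\sum_i Z_i' $ after the first inequality. Threading the constant $C$ in the hypothesis $\inprob' \ge C/\num$ through these estimates is what makes the clean form $e^{-5\num\inprob'}$ come out.

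The main obstacle I expect is the bookkeeping of constants: the Chernoff bound produces some exponent $-c\num\inprob'$ with a specific $c$ depending on the threshold $40$, and one needs $c$ to exceed $5$ with enough slack to (i) absorb the polynomial prefactors $\num$ and $\num\inprob'$ into the exponential, and (ii) gain the extra $\num^{-1/2}$ from Markov's inequality. This is why the hypothesis requires $\inprob' \ge C/\num$ for a \emph{sufficiently large} constant $C$: it guarantees $\num\inprob'$ is large enough that $e^{-(c-5)\num\inprob'}$ dominates any fixed polynomial in $\num$. The argument is otherwise routine, and none of the steps is conceptually deep — it is purely a concentration estimate for the heavy tail of row degrees in a sparse random bipartite-type matrix.
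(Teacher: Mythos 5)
Your deterministic comparison $\sum_i Z_i\le 2\sum_i Z_i'$ is correct and is essentially the same observation the paper uses, and your per-row Chernoff estimate and the resulting bound $\E\sum_i Z_i'\lesssim \num^2\inprob' e^{-c'\num\inprob'}$ (with $c'$ a large absolute constant coming from the threshold $40$) are also fine. The gap is in the last step: Markov's inequality cannot deliver the failure probability $1/\sqrt{\num}$ in the regime that actually matters here. Your Markov step needs $\E\sum_i Z_i'\le \num^{-1/2}\cdot 40\num^2\inprob' e^{-5\num\inprob'}$, i.e.\ $(c'-5)\num\inprob'\gtrsim \tfrac12\log\num$. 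But the hypothesis only gives $\num\inprob'\ge C$ for a \emph{constant} $C$; taking $C$ "sufficiently large" yields a constant margin, not a $\log\num$-type margin. In the sparse regime $\inprob'\asymp C/\num$ (constant expected degree, which is precisely the regime the paper needs, cf.\ the "sparse regime" discussion in Section~\ref{sec:consequence}), the ratio $\E\sum_i Z_i'\big/\big(40\num^2\inprob' e^{-5\num\inprob'}\big)\asymp e^{-(c'-5)\num\inprob'}$ is a constant independent of $\num$, so Markov gives only a constant failure probability, not $1/\sqrt{\num}$. Your phrase "using $\num\snr$... $\num\inprob'\ge C$ large" conflates a large constant with a quantity growing like $\log\num$.

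To close the gap you need genuine concentration of $\sum_i Z_i'$ rather than a first-moment bound: exploit that the rows of $\M$ are independent, so the number of heavy rows $|\calT|$ is a sum of independent indicators each with success probability at most $(e/40)^{40\num\inprob'}\ll e^{-5\num\inprob'}$, and a binomial Chernoff bound shows that $|\calT|\gtrsim \num e^{-5\num\inprob'}$ (the event needed to violate the claimed bound) has probability at most $\exp\big(-\Omega(\num e^{-5\num\inprob'})\big)$; one then also controls the total degree carried by those rows. When $\num\inprob'<\tfrac1{10}\log\num$ this failure probability is at most $\exp(-\Omega(\sqrt{\num}))\le 1/\sqrt{\num}$, which is exactly how the paper proceeds: it splits into the dense case $\num\inprob'\ge\tfrac1{10}\log\num$ (where it cites a lemma of Zhang--Zhou, and where your Markov argument would in fact suffice) and the sparse case $C\le\num\inprob'<\tfrac1{10}\log\num$ (where it cites Lemma~8.1 of Rebrova--Vershynin, whose failure probability is $\exp(-10\num^2\inprob' e^{-5\num\inprob'})$). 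So your outline is salvageable, but only after replacing the Markov step in the sparse case by a tail bound on the heavy-row count/degree that uses independence across rows.
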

\begin{proof}
Define the event $B:=\left\{ \sum_{i}Z_{i}>40\num^{2}\inprob'e^{-5\num\inprob'}\right\} $.
First consider the case where $\num\inprob'\ge\frac{1}{10}\log\num$.
Applying Lemma C.5 in \cite{zhang2017theoretical} with $\inprob$
therein set to $2\inprob'$,\footnote{Inspecting their proof, we see that their bound holds without change
for matrices with independent entries. } we obtain that $\P\left\{ B\right\} \le e^{-10\num\inprob'}$. Under
the case $\num\inprob'\ge\frac{1}{10}\log\num$, this probability
is at most $e^{-\log\num}\le\frac{1}{\sqrt{\num}}$ as claimed. Next
consider the case where $C\le\num\inprob'<\frac{1}{10}\log\num$.
Since $\sum_{j}M_{ij}\indic\{i\in\calT\}\ge40\num\inprob'\indic\{i\in\calT\}$
by definition of $\calT$, we have 
\begin{align*}
\sum_{i}Z_{i} & \le\sum_{i}\sum_{j}M_{ij}\indic\{i\in\calT\}+\num\inprob'\sum_{i}\indic\{i\in\calT\}\\
 & \le2\sum_{i}\sum_{j}M_{ij}\indic\{i\in\calT\}=2\sum_{i}Z'_{i}.
\end{align*}
Set $\varepsilon=20\num\inprob'e^{-5\num\inprob'}$; note that $\varepsilon\in(0,1/2]$
and $21\num\inprob'+2\log\varepsilon^{-1}\le40\num\inprob'$ since
$\inprob'\ge\frac{C}{\num}$. Applying Lemma 8.1 in \cite{rebrova2016local}
with the above $\varepsilon$,\footnote{Inspecting their proof, we see that their bound holds without change
when the means of the Bernoulli are upper bounded by the same $\inprob'$.} we obtain that 
\begin{align*}
\P\left\{ \sum_{i}Z'_{i}>20\num^{2}\inprob'e^{-5\num\inprob'}\right\}  & \le\exp\left(-10\num^{2}\inprob'e^{-5\num\inprob'}\right)\\
 & \le\exp\left(-10C\num e^{-\frac{1}{2}\log\num}\right)\\
 & =\exp\left(-10C\sqrt{\num}\right)\le\frac{1}{\sqrt{\num}}.
\end{align*}
Combining the last two displayed equations proves that $\P\left\{ B\right\} \le\frac{1}{\sqrt{\num}}$
as claimed.
\end{proof}
We are now ready to bound $S_{2}$. Observe that 
\begin{align*}
S_{2} & =2\left\langle \PTperp(\Yhat),\Noise^{\text{up}}\right\rangle \\
 & =2\left\langle \PTperp(\Yhat),\left(\widetilde{\Adj^{\text{up}}}-\E\left[\Adj^{\text{up}}\mid\G\right]\right)+\left(\E\left[\Adj^{\text{up}}\mid\G\right]-\E\Adj^{\text{up}}\right)+\left(\Adj^{\text{up}}-\widetilde{\Adj^{\text{up}}}\right)\right\rangle \\
 & =2\left\langle \PTperp(\Yhat),\left(\widetilde{\Adj^{\text{up}}}-\obsprob\G^{\text{up}}\right)+\left(\obsprob\G^{\text{up}}-\obsprob\E\G^{\text{up}}\right)+\left(\Adj^{\text{up}}-\widetilde{\Adj^{\text{up}}}\right)\right\rangle \\
 & \overset{(a)}{\le}2\Tr\left[\PTperp(\Yhat)\right]\cdot\opnorm{\widetilde{\Adj^{\text{up}}}-\obsprob\G^{\text{up}}}+2\obsprob\Tr\left[\PTperp(\Yhat)\right]\cdot\opnorm{\G^{\text{up}}-\E\G^{\text{up}}}+2\norm[\PTperp(\Yhat)]{\infty}\norm[\Adj^{\text{up}}-\widetilde{\Adj^{\text{up}}}]1\\
 & \overset{(b)}{\le}2\frac{\error}{\num}\opnorm{\widetilde{\Adj^{\text{up}}}-\obsprob\G^{\text{up}}}+2\obsprob\frac{\error}{\num}\opnorm{\G^{\text{up}}-\E\G^{\text{up}}}+8\norm[\Adj^{\text{up}}-\widetilde{\Adj^{\text{up}}}]1,
\end{align*}
where step $(a)$ follows noting that $\PTperp(\Yhat)\succeq0$ (by
Fact \ref{fact:Tr(PTperp(Yhat))}) and applying inequality~(\ref{eq:holder}),
and step $(b)$ holds by Fact \ref{fact:Tr(PTperp(Yhat))} and Fact
\ref{fact:Yhp_infty}. 

We then apply Lemma \ref{lm:CBM_trimmed_spectral_bound} to bound
$\opnorm{\widetilde{\Adj^{\text{up}}}-\obsprob\G^{\text{up}}}$, Lemma
\ref{lem:subgaussian_matrix_spectral_bound} to bound $\opnorm{\G^{\text{up}}-\E\G^{\text{up}}}$,
and Lemma \ref{lm:untrimmed_L1_bound} to $\Adj^{\text{up}}$ and
$(\Adj^{\text{up}})^{\top}$ to bound $\norm[\Adj^{\text{up}}-\widetilde{\Adj^{\text{up}}}]1$
(we do so by setting $M_{ij}=|\adj_{ij}^{\text{up}}|$ for $i,j\in[\num]$
and noting that $\{|\adj_{ij}^{\text{up}}|\}$ are independent Bernoulli
random variables with means $u_{ij}\le\obsprob$). Note that the assumption
$\obsprob\ge\frac{C}{\num}$ of Lemma \ref{lm:untrimmed_L1_bound}
is satisfied by the assumption of this proposition that $\num\snr\ge\consts$
for some large enough $\consts>0$ (since Fact \ref{fact:CBM_renyi_equivalence}
implies $\snr\lesssim\frac{(\inprob-\outprob)^{2}}{\inprob}\le\inprob\le\obsprob$).
It follows that with probability at least $1-\frac{4}{\sqrt{\num}}$,
there holds 
\begin{align*}
S_{2} & \le C_{0}\frac{\error}{\num}\sqrt{\num\obsprob}+C_{1}\num^{2}\obsprob e^{-5\num\obsprob}\eqqcolon C_{0}Q_{1}+C_{1}Q_{2}
\end{align*}
for some constants $C_{0},C_{1}>0$.  It remains to bound $Q_{1}$
and $Q_{2}$ above. 

For $Q_{1}$, Fact \ref{fact:CBM_renyi_equivalence} implies that
$\sqrt{\snr}\le C'\frac{\inprob-\outprob}{\sqrt{\inprob}}$ for some
constant $C'>0$ and therefore
\[
Q_{1}=\error\frac{\inprob-\outprob}{\inprob-\outprob}\sqrt{\frac{\inprob}{\num}}\le\frac{1}{C'}\error(\inprob-\outprob)\sqrt{\frac{1}{\num\snr}}.
\]
Bounding $Q_{2}$ involves some elementary manipulation.

\paragraph{Controlling $Q_{2}$.}

We record an elementary inequality.
\begin{lem}
\label{lm:CBM_simple_ineq}There exists a constant $\consts\ge1$
such that if $\num\snr\ge\consts$, then $\inprob e^{-5\inprob\num}\leq(\inprob-\outprob)e^{-5\num\snr/2}.$
\end{lem}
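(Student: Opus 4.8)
The plan is to take logarithms and reduce the claim to an elementary linear estimate in $\num$. Since $(\inprob-\outprob)e^{-5\inprob\num}>0$, the asserted inequality $\inprob e^{-5\inprob\num}\le(\inprob-\outprob)e^{-5\num\snr/2}$ is equivalent to
\[
\log\frac{\inprob}{\inprob-\outprob}\;\le\;5\inprob\num-\tfrac{5}{2}\num\snr\;=\;5\num\Big(\inprob-\tfrac{\snr}{2}\Big).
\]
The key observation is that, with the CBM shorthands $\inprob:=\obsprob(1-\flipprob)$ and $\inprob-\outprob=\obsprob(1-2\flipprob)$, the left-hand side equals $\log\frac{1-\flipprob}{1-2\flipprob}$, which is a fixed finite positive constant because $\flipprob\in(0,\tfrac12)$ is a constant; denote it $c_\flipprob$. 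Thus the ratio $\inprob/(\inprob-\outprob)$ does not grow, and all that remains is to show the right-hand side dominates $c_\flipprob$ once $\num\snr$ is large.

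For the right-hand side, I would use Fact~\ref{fact:CBM_renyi_equivalence}: $\snr\le\frac{(\inprob-\outprob)^2}{\inprob}\le\inprob-\outprob\le\inprob$, where the last two inequalities just use $\outprob\ge0$. Hence $\inprob-\tfrac{\snr}{2}\ge\tfrac{\inprob}{2}$, so $5\num\big(\inprob-\tfrac{\snr}{2}\big)\ge\tfrac{5}{2}\num\inprob\ge\tfrac{5}{2}\num\snr$. Since by hypothesis $\num\snr\ge\consts$, the right-hand side is at least $\tfrac{5}{2}\consts$. Choosing $\consts:=\max\{1,\tfrac{2}{5}c_\flipprob\}$ (a constant depending only on the model constant $\flipprob$) makes $\tfrac{5}{2}\consts\ge c_\flipprob$, which establishes the displayed inequality and hence the lemma.

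There is no real obstacle here; the only point that requires a moment's care is recognizing that $\inprob/(\inprob-\outprob)=\frac{1-\flipprob}{1-2\flipprob}$ stays bounded, so that the gap between the exponents $5\inprob\num$ and $\tfrac{5}{2}\num\snr$ — which is at least $\tfrac{5}{2}\num\inprob\ge\tfrac{5}{2}\num\snr$ — is more than enough to absorb the constant factor $\inprob/(\inprob-\outprob)$ on the left.
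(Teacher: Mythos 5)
Your proof is correct, but it takes a different route from the paper's. You exploit the CBM-specific identity $\frac{\inprob}{\inprob-\outprob}=\frac{\obsprob(1-\flipprob)}{\obsprob(1-2\flipprob)}=\frac{1-\flipprob}{1-2\flipprob}$, so the prefactor ratio is a fixed constant $c_{\flipprob}$, and then you only need the exponent gap $5\num\big(\inprob-\tfrac{\snr}{2}\big)\ge\tfrac{5}{2}\num\inprob\ge\tfrac{5}{2}\num\snr\ge\tfrac{5}{2}\consts$ (via Fact~\ref{fact:CBM_renyi_equivalence}) to dominate it; every step checks out, and the resulting $\consts=\max\{1,\tfrac{2}{5}c_{\flipprob}\}$ is legitimate since $\flipprob$ is a model constant (the paper itself allows $\flipprob$-dependent constants elsewhere in the CBM analysis). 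The paper instead never uses the constancy of $\flipprob$ here: it bounds $\frac{\inprob}{\inprob-\outprob}\le\frac{\inprob\num}{2}$ using $(\inprob-\outprob)\num\ge\num\snr\ge\consts$, absorbs that polynomial factor into $e^{5\inprob\num/2}$, and then uses $\snr\le\inprob$; this yields a universal $\consts$ (not degenerating as $\flipprob\to\tfrac12$) and, more importantly, is the same template reused verbatim for SBM in Lemma~\ref{lm:SBM_simple_ineq}, where your constant-ratio trick is unavailable because $\inprob/(\inprob-\outprob)$ need not be bounded there. So your argument buys simplicity and an explicit constant, while the paper's buys uniformity in $\flipprob$ and portability across models.
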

\begin{proof}
Note that $\inprob\num\ge(\inprob-\outprob)\num\geq\frac{(\inprob-\outprob)^{2}}{\inprob}\num\ge\frac{1}{C'}\num\snr\geq\frac{1}{C'}\consts$
for some constant $C'>0$ by Fact~\ref{fact:CBM_renyi_equivalence}.
As long as $\consts$ is sufficiently large, we have $\frac{\inprob\num}{2}\leq e^{5\inprob\num/2}$.
These inequalities imply that 
\[
\frac{\inprob}{\inprob-\outprob}\leq\frac{\inprob\num}{2}\leq e^{5\inprob\num/2}\leq e^{5(2\inprob-\snr)\num/2}.
\]
Multiplying both sides by $(\inprob-\outprob)e^{-5\inprob\num}$ yields
the claimed inequality.
\end{proof}
Equipped with the above bound, we are ready to bound $Q_{2}$. Let
$\kappa\coloneqq e^{-(1-\xi)\num\snr}$ where $\xi\coloneqq\conste\sqrt{\frac{1}{\num\snr}}$
for some constant $\conste>0$ such that $\left\lfloor \num\kappa\right\rfloor >0$.
If $\frac{\error}{\num}=0$ or $\frac{\error}{\num}\le\left\lfloor \num\kappa\right\rfloor $,
then the first inequality in Proposition \ref{prop:S2} holds and
we are done. It remains to consider the case $\frac{\error}{\num}>\left\lfloor \num\kappa\right\rfloor >0$.
We have that $\left\lfloor \num\kappa\right\rfloor $ is a positive
integer and $\error>\num\left\lfloor \num\kappa\right\rfloor \ge\frac{1}{2}\num^{2}\kappa$.
Hence, 
\begin{align*}
Q_{2}=\obsprob\num^{2}e^{-5\obsprob\num} & \overset{(a)}{\le}2\inprob\num^{2}e^{-5\inprob\num}\\
 & \overset{(b)}{\le}2(\inprob-\outprob)\num^{2}e^{-5\num\snr/2}\\
 & \le2(\inprob-\outprob)e^{-\num\snr}\cdot\num^{2}e^{-\num\snr}\\
 & \overset{(c)}{\le}2(\inprob-\outprob)e^{-\num\snr}\cdot\num^{2}\kappa\\
 & \le4(\inprob-\outprob)e^{-\num\snr}\cdot\error,
\end{align*}
where step $(a)$ holds since $\obsprob=\frac{1}{1-\flipprob}\inprob$
and $\flipprob\in[0,\frac{1}{2}]$ imply $\obsprob\in[\inprob,2\inprob]$,
step $(b)$ holds by Lemma~\ref{lm:CBM_simple_ineq}, and step $(c)$
holds by definition of $\kappa$. Choosing $\consts>0$ large enough
so that $e^{-\num\snr/2}\le\sqrt{\frac{1}{\num\snr}}$, we have $Q_{2}\le4\error(\inprob-\outprob)\sqrt{\frac{1}{\num\snr}}$.

\paragraph{Putting together.}

Combining the above bounds for $Q_{1}$ and $Q_{2}$, we obtain that
\[
S_{2}\le C_{2}\error(\inprob-\outprob)\sqrt{\frac{1}{\num\snr}}=C_{2}\frac{\error}{\num}\sqrt{\frac{1}{\num\snr}}\num(\inprob-\outprob)
\]
for some constant $C_{2}>0$. Under the assumption $0<\outprob\le\inprob\le1$,
we have $\inprob-\outprob\le C'\frac{\snr}{t^{*}}\cdot\frac{1-\flipprob}{\flipprob}$
for some constant $C'>0$ by Facts~\ref{fact:CBM_tstar_lesssim_(p-q)/p}
and~\ref{fact:CBM_renyi_equivalence}. This completes the proof of
Proposition \ref{prop:S2} for CBM.

\subsection{Proof of Proposition \ref{prop:S2} for Model \ref{mdl:SBM} (SBM)
\label{sec:proof_SBM_prop_S2}}

Similarly to the proof for Model \ref{mdl:CBM}, we control the RHS
of $S_{2}=\left\langle \PTperp(\Yhat),\Noise\right\rangle $ by splitting
it into two parts, one involving a trimmed version of $\W$ and the
other the residual. This technique is similar to those in \cite{fei2019exponential,zhang2017theoretical},
but here we provide somewhat tighter bounds.

\paragraph{Trimming. }

We record a technical lemma concerning a trimmed Bernoulli matrix
\begin{lem}
\label{lm:SBM_trimmed_spectral_bound}Suppose $\M\in\real^{\num\times\num}$
is a random matrix with zero on the diagonal and independent entries
$\{M_{ij}\}$ with the following distribution: $M_{ij}=1-p_{ij}$
with probability $p_{ij}$, and $M_{ij}=-p_{ij}$ with probability
$1-p_{ij}$. Let $\inprob'\coloneqq\max_{ij}\inprob_{ij}$, and let
$\widetilde{\M}$ be the matrix obtained from $\M$ by zeroing out
all the rows and columns having more than $40\num\inprob'$ positive
entries. Then there exists some constant $C>0$ such that with probability
at least $1-\frac{1}{\num^{2}}$, 
\[
\opnorm{\widetilde{\M}}\le C\sqrt{\num\inprob'}
\]
\end{lem}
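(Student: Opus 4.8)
The plan is to reduce this to a standard concentration bound for the spectral norm of a random matrix after a trimming step removes the heavy rows and columns. First I would observe that the untrimmed matrix $\M$ has independent, mean-zero entries bounded by $1$ in absolute value, so $\opnorm{\M}$ is typically of order $\sqrt{\num\inprob'}$ (the entrywise variance is $p_{ij}(1-p_{ij})\le\inprob'$), but this fails with non-negligible probability in the sparse regime because a single atypically dense row can inflate the spectral norm. The role of the trimming is precisely to kill those bad rows/columns: after zeroing out every row and column with more than $40\num\inprob'$ positive entries, the resulting matrix $\widetilde{\M}$ is ``regularized'' and its spectral norm is controlled with high probability.

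The cleanest route is to invoke an existing trimmed-spectral-norm result of the kind used elsewhere in the paper. Concretely, I would appeal to a Feige--Ofek / Le--Levina--Vershynin type bound, or more directly cite \cite[Lemma 3.2]{keshavan2009matrix} or \cite[Lemma C.5]{zhang2017theoretical} as was done for the CBM case in Lemma \ref{lm:CBM_trimmed_spectral_bound}; those results state that trimming all rows/columns with more than a constant multiple of the expected positive-entry count makes the spectral norm $O(\sqrt{\num\inprob'})$ with probability at least $1-\num^{-c}$ for a suitable constant, and inspecting their proofs shows the argument goes through unchanged when the Bernoulli means are only bounded above by $\inprob'$ (rather than all equal), and when the entries are the centered versions $M_{ij}=(1-p_{ij})$ or $-p_{ij}$ rather than raw $0/1$ Bernoulli. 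One mild technical point: the cited results are typically stated for the raw $0/1$ adjacency matrix, so I would first write $\M = \B - \E'$ where $\B$ has $\{0,1\}$ entries and $\E'$ is the (deterministic) matrix of means $p_{ij}$ off the diagonal, trim according to the positive entries of $\B$, and then note $\widetilde{\M} = \widetilde{\B} - \widetilde{\E'}$ where $\opnorm{\widetilde{\E'}}\le\opnorm{\E'}\le\num\inprob'$ is dominated by the trimmed spectral bound on $\widetilde{\B}$ only if $\num\inprob'\lesssim\sqrt{\num\inprob'}$, which need not hold; so instead I would trim $\B$ and separately control $\widetilde{\B}-\obsprob'\E'/\inprob'$-type residuals, or, more simply, apply the cited lemma directly to the centered matrix after verifying (by inspecting the martingale/epsilon-net argument in the proof) that centering the entries does not affect the bound. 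This is exactly the ``inspecting their proof'' maneuver already used twice in the CBM section.

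The main obstacle I anticipate is not the probabilistic heavy lifting — that is off-the-shelf — but rather making sure the trimming definition matches the cited lemma: here trimming is by the count of \emph{positive} entries ($M_{ij}=1-p_{ij}>0$ iff the underlying Bernoulli equals $1$), which coincides with trimming by the number of $1$'s in the underlying $0/1$ matrix, so the threshold $40\num\inprob'$ lines up with $40\num\inprob'$ in Lemma \ref{lm:untrimmed_L1_bound} and Lemma \ref{lm:CBM_trimmed_spectral_bound}. Once that bookkeeping is in place, the conclusion $\opnorm{\widetilde{\M}}\le C\sqrt{\num\inprob'}$ with probability at least $1-\num^{-2}$ follows immediately by choosing the constant in the cited bound and absorbing constants. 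I would also note in passing that no lower bound on $\num\inprob'$ is needed for the statement to be vacuously consistent, but for it to be useful (i.e., for $\sqrt{\num\inprob'}$ to beat the trivial bound) one implicitly works in the regime $\num\inprob'\gtrsim 1$, which is guaranteed under the hypotheses of Theorem \ref{thm:SDP_error} via $\inprob\gtrsim\snr$ and $\num\snr\ge\consts$.
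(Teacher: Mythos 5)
Your strategy\textemdash delegate the statement to an existing trimmed-spectral-norm bound\textemdash is the same in spirit as the paper's, but the paper's proof is literally a one-line citation of \cite[Lemma 9]{fei2019exponential} with $\sigma^{2}$ there set to $\inprob'$; that lemma is stated directly for a centered, inhomogeneous, bounded-entry random matrix trimmed by the count of positive entries, so none of the adaptation issues you worry about arise. Your concrete citations are also not quite the right fit: \cite[Lemma 3.2]{keshavan2009matrix} is tailored to the erasure structure exploited in the CBM case (a sparsely sampled matrix compared against a scaled \emph{full} matrix), and \cite[Lemma C.5]{zhang2017theoretical} is the $\ell_{1}$ bound on trimmed rows used in Lemma \ref{lm:untrimmed_L1_bound}, not a spectral bound; the relevant off-the-shelf family is the Feige\textendash Ofek / Le\textendash Levina\textendash Vershynin regularization bound, which controls $\opnorm{\widetilde{\B}-\E\B}$ for the raw $\{0,1\}$ matrix $\B$ with the trimmed $\widetilde{\B}$ compared to the full expectation.

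The substantive gap is in the conversion from that statement to the one you need. Writing $\widetilde{\M}=\widetilde{\B}-\widetilde{\E\B}$ (trimming applied to the centered matrix zeroes the same rows/columns of both $\B$ and $\E\B$), the leftover term is $\E\B$ restricted to the trimmed rows and columns, and you correctly observe that the crude bound $\opnorm{\E\B}\le\num\inprob'$ is useless; but your two fixes\textemdash a garbled residual decomposition, or ``inspect the proof and check centering doesn't matter''\textemdash are not carried out, so as written the argument does not close. The missing ingredient is a high-probability bound on the number $k$ of trimmed rows: by Chernoff, a given row has more than $40\num\inprob'$ positive entries with probability at most $e^{-c\num\inprob'}$, so $k\lesssim\num e^{-c\num\inprob'}$ with the required probability (with separate easy treatment of the regime $\num\inprob'=O(1)$), and then the residual is supported on $k$ rows and $k$ columns with entries at most $\inprob'$, giving operator norm $\lesssim\inprob'\sqrt{k\num}\lesssim\sqrt{\num\inprob'}\cdot\sqrt{\num\inprob'}\,e^{-c\num\inprob'/2}\lesssim\sqrt{\num\inprob'}$. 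With that estimate supplied your route works and is a reasonable self-contained alternative to the paper's citation; without it, the key step is only asserted.
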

\begin{proof}
The claim follows from \cite[Lemma 9]{fei2019exponential} with $\sigma^{2}$
therein set to $\inprob'$. 
\end{proof}
Let $\Noise^{\text{up}}$ be obtained from $\Noise$ by zeroing out
its lower triangular entries. To bound $S_{2}$, we observe that 
\begin{align*}
S_{2} & =2\left\langle \PTperp(\Yhat),\Noise^{\text{up}}\right\rangle \\
 & =2\left\langle \PTperp(\Yhat),\widetilde{\Noise^{\text{up}}}\right\rangle +2\left\langle \PTperp(\Yhat),\Noise^{\text{up}}-\widetilde{\Noise^{\text{up}}}\right\rangle \\
 & \overset{(a)}{\le}2\Tr\left[\PTperp(\Yhat)\right]\cdot\opnorm{\widetilde{\Noise^{\text{up}}}}+2\norm[\PTperp(\Yhat)]{\infty}\norm[\Noise^{\text{up}}-\widetilde{\Noise^{\text{up}}}]1\\
 & \overset{(b)}{\le}2\frac{\error}{\num}\opnorm{\widetilde{\Noise^{\text{up}}}}+8\norm[\Noise^{\text{up}}-\widetilde{\Noise^{\text{up}}}]1,
\end{align*}
where step $(a)$ follows from noting that $\PTperp(\Yhat)\succeq0$
(by Fact \ref{fact:Tr(PTperp(Yhat))}) and applying inequality~(\ref{eq:holder}),
and step $(b)$ holds by Facts~\ref{fact:Tr(PTperp(Yhat))} and~\ref{fact:Yhp_infty}.
We then apply Lemma \ref{lm:SBM_trimmed_spectral_bound} to $\widetilde{\Noise^{\text{up}}}$
to bound $\opnorm{\widetilde{\Noise^{\text{up}}}}$, and apply Lemma
\ref{lm:untrimmed_L1_bound} to $\Noise^{\text{up}}$ and $(\Noise^{\text{up}})^{\top}$
to bound $\norm[\Noise^{\text{up}}-\widetilde{\Noise^{\text{up}}}]1$.\footnote{Here, we assume that $\Adj$ has zero diagonal and therefore $\Noise$
also has zero diagonal. This assumption is inconsequential to our
proof as mentioned in Section \ref{sec:setup_model}.} Note that the assumption $\inprob'\ge\frac{C}{\num}$ of Lemma \ref{lm:untrimmed_L1_bound}
is satisfied by the assumption of this proposition that $\num\snr\ge\consts$
for some large enough $\consts>0$ (since Fact \ref{fact:SBM_renyi_equivalence}
implies $\snr\lesssim\frac{(\inprob-\outprob)^{2}}{\inprob}\le\inprob$).
We conclude that with probability at least $1-\frac{1}{\num^{2}}-\frac{2}{\sqrt{\num}}$,
\begin{align*}
S_{2} & \le C_{0}\frac{\error}{\num}\sqrt{\num\inprob}+C_{1}\num^{2}\inprob e^{-5\num\inprob}\eqqcolon C_{0}Q_{1}+C_{1}Q_{2}
\end{align*}
for some constants $C_{0},C_{1}>0$. It remains to control $Q_{1}$
and $Q_{2}$ above. 

For $Q_{1},$ note that Fact \ref{fact:SBM_renyi_equivalence} implies
$\sqrt{\snr}\le C'\frac{\inprob-\outprob}{\sqrt{\inprob}}$ for some
constant $C'>0$ and therefore 
\[
Q_{1}=\error\frac{\inprob-\outprob}{\inprob-\outprob}\sqrt{\frac{\inprob}{\num}}\le\frac{1}{C'}\error(\inprob-\outprob)\sqrt{\frac{1}{\num\snr}}.
\]
Bounding $Q_{2}$ involves some elementary manipulation.

\paragraph{Controlling $Q_{2}$.}

We record an elementary inequality.
\begin{lem}
\label{lm:SBM_simple_ineq}There exists a constant $\consts\ge1$
such that if $\num\snr\ge\consts$, then $\inprob e^{-5\inprob\num/2}\leq(\inprob-\outprob)e^{-5\num\snr/4}.$
\end{lem}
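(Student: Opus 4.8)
The plan is to prove this exactly as Lemma~\ref{lm:CBM_simple_ineq} was proved, the only change being the halved coefficients $5\inprob\num/2$ and $5\num\snr/4$ (in place of $5\inprob\num$ and $5\num\snr/2$), which is what the later splitting of $Q_2$ in the SBM proof will call for. First I would rewrite the target inequality in the equivalent form
\[
\frac{\inprob}{\inprob-\outprob}\;\le\;\exp\!\Big[\tfrac{5\num}{4}\big(2\inprob-\snr\big)\Big],
\]
obtained by multiplying both sides by $(\inprob-\outprob)\,e^{5\inprob\num/2}$. It then suffices to upper bound the left-hand side and lower bound the exponent on the right.

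For the left-hand side: by Fact~\ref{fact:SBM_renyi_equivalence} (applicable since $\inprob\le 1-c_1$) we have $\snr\lesssim (\inprob-\outprob)^2/\inprob$, so $(\inprob-\outprob)\num\ge \frac{(\inprob-\outprob)^2}{\inprob}\num\gtrsim\num\snr\ge\consts$; hence, once $\consts$ is chosen large enough (depending on $c_0,c_1$), $\inprob-\outprob\ge 2/\num$ and therefore $\frac{\inprob}{\inprob-\outprob}\le\frac{\inprob\num}{2}$. For the exponent: again by Fact~\ref{fact:SBM_renyi_equivalence}, $\snr\lesssim(\inprob-\outprob)^2/\inprob\le\inprob-\outprob<\inprob$, so $2\inprob-\snr\ge\inprob$ and $\exp[\tfrac{5\num}{4}(2\inprob-\snr)]\ge e^{5\inprob\num/4}$. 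Combining these with the elementary inequality $\tfrac{x}{2}\le e^{5x/4}$ for $x\ge0$ (since $e^{5x/4}\ge 1+\tfrac{5x}{4}>\tfrac x2$) applied with $x=\inprob\num$ yields
\[
\frac{\inprob}{\inprob-\outprob}\;\le\;\frac{\inprob\num}{2}\;\le\;e^{5\inprob\num/4}\;\le\;\exp\!\Big[\tfrac{5\num}{4}\big(2\inprob-\snr\big)\Big],
\]
and rearranging gives the claim.

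The one step requiring care is $\snr\le\inprob$ (equivalently $2\inprob-\snr\ge\inprob$). In the CBM case this is immediate from the \emph{exact} bound $\snr\le(\inprob-\outprob)^2/\inprob$ in Fact~\ref{fact:CBM_renyi_equivalence}, whereas Fact~\ref{fact:SBM_renyi_equivalence} only supplies an order-wise equivalence, so a constant may in principle intervene. The way around this is to note that this lemma is invoked only in the branch of the SBM proof of Proposition~\ref{prop:S2} where $\lfloor\num\kappa\rfloor>0$, which forces $\num\snr\lesssim\log\num$ and hence $\snr=o(1)$; in that regime one may use the equivalence $\snr=(1+o(1))(\sqrt\inprob-\sqrt\outprob)^2<(1+o(1))\,\inprob$ recorded earlier, so that $2\inprob-\snr\ge(1-o(1))\inprob$, which is amply enough to run the chain above (any coefficient slightly below $5/2$ on $\inprob\num$ still dominates $\log(\inprob\num/2)$). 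Apart from this point, the argument is the same routine algebra as in Lemma~\ref{lm:CBM_simple_ineq}.
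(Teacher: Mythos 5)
Your proof is correct and follows essentially the same route as the paper's: the same rearrangement into $\frac{\inprob}{\inprob-\outprob}\le e^{5\num(2\inprob-\snr)/4}$ and the same chain $\frac{\inprob}{\inprob-\outprob}\le\frac{\inprob\num}{2}\le e^{5\inprob\num/4}\le e^{5(2\inprob-\snr)\num/4}$, with $(\inprob-\outprob)\num\gtrsim\num\snr\ge\consts$ supplied by Fact~\ref{fact:SBM_renyi_equivalence}. The point you flag is genuine: the last link silently uses $\snr\le\inprob$, which the order-wise Fact~\ref{fact:SBM_renyi_equivalence} does not furnish, and the paper's own proof passes over this; your repair\textemdash restricting to the branch of Proposition~\ref{prop:S2} in which the lemma is actually invoked, where $\lfloor\num\kappa\rfloor>0$ forces $\snr=o(1)$ and hence $2\inprob-\snr\ge(1-o(1))\inprob$, absorbing the loss into the exponent's constant\textemdash is a legitimate fix rather than an unnecessary detour. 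One small caveat: the equivalence $\snr=(1+o(1))(\sqrt{\inprob}-\sqrt{\outprob})^{2}$ is recorded in the paper only for $\inprob=o(1)$, which $\snr=o(1)$ alone does not imply; but the bound you actually need, $\snr\le(1+o(1))\inprob$, does hold in that regime under $c_{0}\inprob\le\outprob<\inprob\le1-c_{1}$ by a direct Hellinger-type estimate, namely $\snr=(1+o(1))\big[(\sqrt{\inprob}-\sqrt{\outprob})^{2}+(\sqrt{1-\inprob}-\sqrt{1-\outprob})^{2}\big]$ with $(\sqrt{\inprob}-\sqrt{\outprob})^{2}\le\inprob$ and $(\sqrt{1-\inprob}-\sqrt{1-\outprob})^{2}\le(\inprob-\outprob)^{2}/c_{1}=o(\inprob)$, so your argument goes through.
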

\begin{proof}
Note that $\inprob\num\ge(\inprob-\outprob)\num\geq\frac{(\inprob-\outprob)^{2}}{\inprob}\num\ge\frac{1}{C'}\num\snr\geq\frac{1}{C'}\consts$
for some constant $C'>0$ by Fact~\ref{fact:SBM_renyi_equivalence}.
As long as $\consts$ is sufficiently large, we have $\frac{\inprob\num}{2}\leq e^{5\inprob\num/4}$.
These inequalities imply that 
\[
\frac{\inprob}{\inprob-\outprob}\leq\frac{\inprob\num}{2}\leq e^{5\inprob\num/4}\leq e^{5(2\inprob-\snr)\num/4}.
\]
Multiplying both sides by $(\inprob-\outprob)e^{-5\inprob\num/2}$
yields the claimed inequality.
\end{proof}
Equipped with the above bound, we are ready to bound $Q_{2}$. Let
$\kappa\coloneqq e^{-(1-\xi)\num\snr/2}$ where $\xi\coloneqq\conste\sqrt{\frac{1}{\num\snr}}$
for some constant $\conste>0$ such that $\left\lfloor \num\kappa\right\rfloor >0$.
 If $\frac{\error}{\num}=0$ or $\frac{\error}{\num}\le\left\lfloor \num\kappa\right\rfloor $,
then the first inequality in Proposition \ref{prop:S2} holds and
we are done. It remains to consider the case $\frac{\error}{\num}>\left\lfloor \num\kappa\right\rfloor >0$.
We have that $\left\lfloor \num\kappa\right\rfloor $ is a positive
integer and $\error>\num\left\lfloor \num\kappa\right\rfloor \ge\frac{1}{2}\num^{2}\kappa$.
We therefore have
\begin{align*}
Q_{2}\le\inprob\num^{2}e^{-5\inprob\num/2} & \overset{(a)}{\le}(\inprob-\outprob)\num^{2}e^{-5\num\snr/4}\\
 & \le(\inprob-\outprob)e^{-\num\snr/2}\cdot\num^{2}e^{-(1-\xi)\num\snr/2}\\
 & \le2(\inprob-\outprob)e^{-\num\snr/2}\cdot\error\\
 & \overset{(b)}{\le}2\error(\inprob-\outprob)\sqrt{\frac{1}{\num\snr}},
\end{align*}
where step $(a)$ is due to Lemma~\ref{lm:SBM_simple_ineq}, and
step $(b)$ holds because $\num\snr\ge\consts$ for $\consts$ sufficiently
large. 

\paragraph{Putting together.}

Combining the above bounds for $Q_{1}$ and $Q_{2}$, we obtain that
\[
S_{2}\le C_{2}\error(\inprob-\outprob)\sqrt{\frac{1}{\num\snr}}
\]
for some constant $C_{2}>0$. Under the assumption $0<c_{0}\inprob\le\outprob<\inprob\le1-c_{1}$,
we have $\inprob-\outprob\le\frac{C'\snr}{t^{*}}$ for a constant
$C'>0$ by Facts~\ref{fact:SBM_tstar_lesssim_(p-q)/p} and \ref{fact:SBM_renyi_equivalence}.
This completes the proof of Proposition \ref{prop:S2} for SBM.

\subsection{Proof of Lemma \ref{lem:pilot_bound}}

In this section, we establish the pilot bound in Lemma \ref{lem:pilot_bound}
under each of the three models.

\subsubsection{Proof of Lemma \ref{lem:pilot_bound} for Model \ref{mdl:Z2} (Z2)
\label{sec:proof_Z2_pilot_bound}}

Since $\Yhat\in\betterset(\A)$, we have 
\[
0\le\left\langle \Yhat-\Ystar,\Adj\right\rangle =\left\langle \Yhat-\Ystar,\E\Adj\right\rangle +\left\langle \Yhat-\Ystar,\Adj-\E\Adj\right\rangle .
\]
By Fact \ref{fact:pos_neg_entries_in_Y-Ystar} with $\Y=\Yhat$ and
the fact that $\E\Adj=\Ystar$, we have $\left\langle \Yhat-\Ystar,\E\Adj\right\rangle =-\error.$
Combining, we have the bound $\error\le\left\langle \Yhat-\Ystar,\Adj-\E\Adj\right\rangle .$
We proceed by controlling the RHS as 
\[
\error\le\Tr(\Yhat)\cdot\opnorm{\Noise}+\Tr(\Ystar)\cdot\opnorm{\Noise}=2\num\opnorm{\Noise},
\]
which follows inequality~(\ref{eq:holder}) applied to positive semidefinite
matrices $\Yhat$ and $\Ystar$ satisfying $\Tr(\Yhat)=\Tr(\Ystar)=\num$.
Applying the spectral norm bound in Lemma \ref{lm:Z2_spectral_bound_W},
we obtain that with probability at least $1-e^{-\num/2}$, 
\[
\error\le8\num\sqrt{\std^{2}\num}=4\sqrt{2}\sqrt{\frac{\num^{3}}{\snr}},
\]
where the last step follows from the definition of $\snr$ in Equation
(\ref{eq:snr}). The proof is completed. 

\subsubsection{Proof of Lemma \ref{lem:pilot_bound} for Model \ref{mdl:CBM} (CBM)
\label{sec:proof_CBM_pilot_bound}}

Recall that we have introduced the shorthands  $\inprob\coloneqq\obsprob(1-\flipprob)$
and $\outprob\coloneqq\obsprob\flipprob$. Since $\Yhat\in\betterset(\A)$,
we have 
\[
0\le\left\langle \Yhat-\Ystar,\Adj\right\rangle =\left\langle \Yhat-\Ystar,\E\Adj\right\rangle +\left\langle \Yhat-\Ystar,\Adj-\E\Adj\right\rangle .
\]
By Fact \ref{fact:pos_neg_entries_in_Y-Ystar} and the fact that $\E\Adj=(\inprob-\outprob)\Ystar$,
we have $\left\langle \Yhat-\Ystar,\E\Adj\right\rangle =-(\inprob-\outprob)\error.$
Combining, we have the bound $\error\le\frac{1}{\inprob-\outprob}\left\langle \Yhat-\Ystar,\Adj-\E\Adj\right\rangle .$
To control the RHS, we compute 
\[
\left\langle \Yhat-\Ystar,\Adj-\E\Adj\right\rangle \leq2\sup_{\mathbf{Y}\succeq0,\textmd{diag}(\mathbf{Y})\le\onevec}\left|\left\langle \mathbf{Y},\Adj-\E\Adj\right\rangle \right|.
\]
Grothendieck's inequality~\cite{Grothendieck,Lindenstrauss} guarantees
that 
\[
\sup_{\mathbf{Y}\succeq0,\textmd{diag}(\mathbf{Y})\le\onevec}\left|\left\langle \mathbf{Y},\Adj-\E\Adj\right\rangle \right|\leq K_{G}\norm[\Adj-\E\Adj]{\infty\to1}
\]
where $K_{G}$ denotes Grothendieck's constant ($0<K_{G}\leq1.783$)
and 
\[
\norm[\Adj-\E\Adj]{\infty\to1}:=\sup_{\mathbf{x}:\|\mathbf{x}\|_{\infty}\leq1}\norm[(\Adj-\E\Adj)\mathbf{x}]1=\sup_{\mathbf{y},\mathbf{z}\in\{\pm1\}^{\num}}\left|\mathbf{y}^{\top}(\Adj-\E\Adj)\mathbf{z}\right|.
\]

Set $v^{2}\coloneqq\sum_{1\leq i\le j\leq\num}\Var(\adj_{ij})$ and
note that $\left|\adj_{ij}-\E\adj_{ij}\right|\le2$ for $i,j\in[\num]$.
For each pair of fixed vectors $\mathbf{y},\mathbf{z}\in\{\pm1\}^{\num}$,
the Bernstein inequality ensures that for each number $t\geq0$, 
\[
\P\left\{ \left|\mathbf{y}^{\top}(\Adj-\E\Adj)\mathbf{z}\right|>t\right\} \leq2\exp\left\{ -\frac{t^{2}}{2v^{2}+4t/3}\right\} .
\]
Setting $t=\sqrt{16\num v^{2}}+\frac{8}{3}\num$ gives
\[
\P\left\{ \left|\mathbf{y}^{\top}(\Adj-\E\Adj)\mathbf{z}\right|>\sqrt{16\num v^{2}}+\frac{8}{3}\num\right\} \leq2e^{-2\num}.
\]
Applying the union bound and using the fact that $v^{2}\leq\obsprob\frac{\num^{2}+\num}{2}=\frac{\inprob}{1-\flipprob}\cdot\frac{\num^{2}+\num}{2}$,
we obtain that with probability at least $1-2^{2\num}\cdot2e^{-2\num}=1-2(e/2)^{-2\num}$,
\[
\norm[\Adj-\E\Adj]{\infty\to1}\le2\sqrt{2\frac{\inprob}{1-\flipprob}(\num^{3}+\num^{2})}+\frac{8}{3}\num.
\]

Combining pieces, we conclude that with probability at least $1-2(e/2)^{-2\num}$,
\[
\left\langle \Yhat-\Ystar,\Adj-\E\Adj\right\rangle \leq8\sqrt{2\frac{\inprob}{1-\flipprob}(\num^{3}+\num^{2})}+\frac{32}{3}\num;
\]
whence 
\begin{align*}
\error & \leq\frac{1}{\inprob-\outprob}\left(8\sqrt{2\frac{\inprob}{1-\flipprob}(\num^{3}+\num^{2})}+\frac{32}{3}\num\right)\\
 & \overset{(a)}{\le}\frac{45}{\inprob-\outprob}\sqrt{\frac{\inprob}{1-\flipprob}\num^{3}}\overset{(b)}{\le}\frac{45}{C'}\sqrt{\frac{\num^{3}}{\snr(1-\flipprob)}},
\end{align*}
for some constant $C'>0$, where step $(a)$ holds by our assumption
$\inprob:=\obsprob(1-\flipprob)\ge\frac{1-\flipprob}{\num}$, and
step $(b)$ follows from Fact \ref{fact:CBM_renyi_equivalence}. The
proof is completed in view of the assumption of Model \ref{mdl:CBM}
that $\flipprob$ is a constant. 

\subsubsection{Proof of Lemma \ref{lem:pilot_bound} for Model \ref{mdl:SBM} (SBM)
\label{sec:proof_SBM_pilot_bound}}

The proof follows similar arguments as those in Section~\ref{sec:proof_CBM_pilot_bound}.
Since $\Yhat\in\betterset(\A)$, we have 
\begin{align*}
0\le\left\langle \Yhat-\Ystar,\Adj\right\rangle  & \overset{(a)}{=}\left\langle \Yhat-\Ystar,\Adj-\frac{\inprob+\outprob}{2}\OneMat\right\rangle \\
 & =\left\langle \Yhat-\Ystar,\E\Adj-\frac{\inprob+\outprob}{2}\OneMat\right\rangle +\left\langle \Yhat-\Ystar,\Adj-\E\Adj\right\rangle 
\end{align*}
where step $(a)$ holds since $\left\langle \OneMat,\Yhat\right\rangle =\left\langle \OneMat,\Ystar\right\rangle $.
By Fact \ref{fact:pos_neg_entries_in_Y-Ystar} and the fact that $\E\Adj-\frac{\inprob+\outprob}{2}\OneMat=\frac{\inprob-\outprob}{2}\Ystar$,
we have
\[
\left\langle \Yhat-\Ystar,\E\Adj-\frac{\inprob+\outprob}{2}\OneMat\right\rangle =-\frac{\inprob-\outprob}{2}\error.
\]
Therefore, we have the bound $\error\le\frac{2}{\inprob-\outprob}\left\langle \Yhat-\Ystar,\Adj-\E\Adj\right\rangle .$
To control the RHS, we apply Grothendieck's inequality~\cite{Grothendieck,Lindenstrauss}
to obtain 
\[
\left\langle \Yhat-\Ystar,\Adj-\E\Adj\right\rangle \leq2\sup_{\mathbf{Y}\succeq0,\textmd{diag}(\mathbf{Y})\le\onevec}\left|\left\langle \mathbf{Y},\Adj-\E\Adj\right\rangle \right|\leq2K_{G}\norm[\Adj-\E\Adj]{\infty\to1},
\]
where $K_{G}$ is Grothendieck's constant ($0<K_{G}\leq1.783$) and
\[
\norm[\Adj-\E\Adj]{\infty\to1}:=\sup_{\mathbf{x}:\|\mathbf{x}\|_{\infty}\leq1}\norm[(\Adj-\E\Adj)\mathbf{x}]1=\sup_{\mathbf{y},\mathbf{z}\in\{\pm1\}^{\num}}\left|\mathbf{y}^{\top}(\Adj-\E\Adj)\mathbf{z}\right|.
\]

Set $v^{2}\coloneqq\sum_{1\leq i<j\leq\num}\Var(\adj_{ij})$. For
each pair of fixed vectors $\mathbf{y},\mathbf{z}\in\{\pm1\}^{\num}$,
the Bernstein inequality ensures that for each number $t\geq0$, 
\[
\P\left\{ \left|\mathbf{y}^{\top}(\Adj-\E\Adj)\mathbf{z}\right|>t\right\} \leq2\exp\left\{ -\frac{t^{2}}{2v^{2}+4t/3}\right\} .
\]
Setting $t=\sqrt{16\num v^{2}}+\frac{8}{3}\num$ gives
\[
\P\left\{ \left|\mathbf{y}^{\top}(\Adj-\E\Adj)\mathbf{z}\right|>\sqrt{16\num v^{2}}+\frac{8}{3}\num\right\} \leq2e^{-2\num}.
\]
Applying the union bound and using the fact that $v^{2}\leq\inprob(\num^{2}+\num)/2$,
we obtain that with probability at least $1-2^{2\num}\cdot2e^{-2\num}=1-2(e/2)^{-2\num}$,
\[
\norm[\Adj-\E\Adj]{\infty\to1}\le2\sqrt{2\inprob(\num^{3}+\num^{2})}+\frac{8}{3}\num.
\]

Combining pieces, we conclude that with probability at least $1-2(e/2)^{-2\num}$,
\begin{align*}
\error & \leq\frac{2}{\inprob-\outprob}\cdot2K_{G}\cdot\left(2\sqrt{2\inprob(\num^{3}+\num^{2})}+\frac{8}{3}\num\right)\\
 & \overset{(a)}{\le}\frac{45\sqrt{\inprob\num^{3}}}{\inprob-\outprob}\le\frac{45}{C'}\sqrt{\frac{\num^{3}}{\snr}},
\end{align*}
for some constant $C'>0$, where step $(a)$ holds by our assumption
$\inprob\ge\frac{1}{\num}$ and the last step follows from Fact \ref{fact:SBM_renyi_equivalence}.
The proof is completed.

\subsection{Proof of Lemma \ref{lem:make_gamma_integer} \label{sec:proof_lem_make_gamma_integer}}

If $\phi_{i}\ge0$ for all $i\in[\left\lceil \beta\right\rceil ]$,
then the result follows immediately. Now we assume that at least one
of $\{\phi_{i}\}$ is negative. Define $w\coloneqq\argmin\{i\in[\left\lceil \beta\right\rceil ]:\phi_{i}<0\}$
to be the smallest index of negative $\phi_{i}$. If $\beta_{0}\in[1,w-1]$,
we have $V(\beta_{0})\ge0$ since $\phi_{i}\ge0$, $\forall i\in[1,w-1]$.
If $\beta_{0}\in[w-1,\beta]$, we note that $V$ is decreasing on
$[w-1,\beta]$ since $\phi_{i}<0$, $\forall i\in[w,\left\lceil \beta\right\rceil ]$,
hence $V(\beta_{0})\ge V(\beta)\ge0$. The proof is completed. 

\subsection{Proof of Lemma \ref{lem:subset_sum_ineq} \label{sec:proof_subset_sum_ineq}}

Recall that $X_{i}:=-D_{ii}=-\labelstar_{i}\sum_{j\in\left[\num\right]}\adj_{ij}^{0}\labelstar_{j}$.
 For clarity of exposition, we define the shorthands 
\begin{align*}
L_{m} & \coloneqq\max_{\calM\subset\left[\num\right],\ \left|\calM\right|=m}\left[\sum_{i\in\calM}X_{i}\right],\qquad\text{for }m\in[\left\lfloor M\right\rfloor ],\\
L_{m,\calM} & \coloneqq\sum_{i\in\calM}X_{i},\qquad\text{for }\calM\subset[\num]\text{ with }\left|\calM\right|=m,\\
R_{m} & \coloneqq\frac{1}{t^{*}}\left((1+\eta)m\log\left(\frac{\num e}{m}\right)-(1-2\eta)m\proxnum\snr\right),\qquad\text{for }m\in[\left\lfloor M\right\rfloor ],\\
P_{m,\calM} & \coloneqq\P\left(L_{m,\calM}\ge R_{m}\right),\qquad\text{for }\calM\subset[\num]\text{ with }\left|\calM\right|=m,\\
P_{m} & \coloneqq\P\left(\exists\calM\subset[\num],\left|\calM\right|=m:L_{m,\calM}\ge R_{m}\right),\\
P & \coloneqq\P\left(\exists m\in[\left\lfloor M\right\rfloor ]:L_{m}\ge R_{m}\right).
\end{align*}
Our goal is to show that $P\le3\exp\left(-\sqrt{\log\num}\right)$.
We start the proof by controlling $P_{m,\calM}$ for a fixed $\calM\subset[\num]$
with $\left|\calM\right|=m$. 

\subsubsection{A closer look at $L_{m,\protect\calM}$}

 For fixed $m$ and $\calM$, the quantity $L_{m,\calM}$ is the
sum of $m\num$ random variables: $L_{m,\calM}=\sum_{j\in[m\num]}V_{j}$.
A technicality is that due to the symmetry of the matrix $\Adj$,
there may exist some $j\ne j'\in[m\num]$ such that $V_{j}$ and $V_{j'}$
identify the same random variable. Let us define a set to group together
all such random variables. We set 
\begin{align*}
\calJ & \coloneqq\left\{ j\in[m\num]:\exists j'\in[m\num]\backslash\{j\}\text{ s.t. }V_{j}=V_{j'}\right\} .
\end{align*}
We also define the complement of $\calJ$ as $\calJ{}^{\complement}\coloneqq[m\num]\backslash\calJ.$
Note that  
\[
m_{1}\coloneqq\left|\calJ{}^{\complement}\right|=m\num-m^{2}+m\qquad\text{and}\qquad m_{2}\coloneqq\frac{1}{2}\left|\calJ\right|=\frac{m(m-1)}{2}.
\]
It is not hard to see that $\{V_{j}:j\in\calJ^{\complement}\}$ and
half of $\{V_{j}:j\in\calJ\}$ are independent. Now we can write
\[
L_{m,\calM}=\sum_{j\in\calJ}V_{j}+\sum_{j\in\calJ{}^{\complement}}V_{j}.
\]

\subsubsection{Controlling $P_{m,\protect\calM}$}

Recall that $t^{*}$ defined in Equation (\ref{eq:def_tstar}) satisfies
$t^{*}>0$. Using the Chernoff bound, we have 
\begin{align*}
P_{m,\calM} & =\P\left(L_{m,\calM}\ge R_{m}\right)\\
 & =\P\left(\exp\left(t^{*}L_{m,\calM}\right)\ge\exp\left(t^{*}R_{m}\right)\right)\\
 & \le\exp\left(-t^{*}R_{m}\right)\cdot\left[\E\exp\left(t^{*}\sum_{j\in\calJ{}^{\complement}}V_{j}\right)\right]\cdot\left[\E\exp\left(t^{*}\sum_{j\in\calJ}V_{j}\right)\right]\\
 & \eqqcolon Q_{1}Q_{2}Q_{3}.
\end{align*}
It suffices to control $Q_{1}$, $Q_{2}$ and $Q_{3}$. By definition
of $R_{m}$, we have 
\[
Q_{1}=\exp\left[-(1+\eta)m\log\left(\frac{\num e}{m}\right)+(1-2\eta)m\proxnum\snr\right].
\]
As our main step, we show that the following bounds for $Q_{2}$ and
$Q_{3}$ hold for all three models.
\begin{lem}
\label{lem:subset_sum_Qs}Under the assumption in Lemma \ref{lem:subset_sum_ineq},
we have 
\[
Q_{2}\le\exp\left[-(1-\eta)m\proxnum\snr\right]\qquad\text{and}\qquad Q_{3}=1.
\]
\end{lem}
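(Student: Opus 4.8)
The plan is to estimate the two moment generating functions $Q_2 = \E\exp(t^*\sum_{j\in\calJ^\complement} V_j)$ and $Q_3 = \E\exp(t^*\sum_{j\in\calJ} V_j)$ by exploiting independence and the key fact, to be verified model-by-model, that $\E e^{-t^*\var}\approx e^{-\snr}$ (Facts~\ref{fact:Z2_magic_identity}, \ref{fact:CBM_magic_identity}, \ref{fact:SBM_magic_identity}), together with the counts $m_1=|\calJ^\complement|=m\num-m^2+m$ and $m_2=\tfrac12|\calJ|=\tfrac{m(m-1)}{2}$. First I would unpack what each $V_j$ is: since $X_i=-\labelstar_i\sum_{j\in[\num]}\adj^0_{ij}\labelstar_j$, each summand of $L_{m,\calM}$ is of the form $-\labelstar_i\adj^0_{ij}\labelstar_j$. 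Recalling from Section~\ref{sec:preliminary} that $\adj_{ij}\sim\labelstar_i\labelstar_j\var$ for Z2 and CBM (so $-\labelstar_i\adj_{ij}\labelstar_j\sim-\var$ with $\adj^0=\adj$), and that for SBM the relevant differences behave like $\var$ after the $\tune$-shift, each $V_j$ is (in distribution) a copy of $-\var$ or, for the duplicated pairs, two copies of the same $-\var$; I would handle the SBM shift carefully since $\adj^0=\adj-\tune\OneMat$ contributes an extra deterministic factor $e^{t^*\tune}$ per entry, which is exactly where the $\proxnum=\num/2$ (rather than $\num$) and the precise ``magic identity'' for SBM come in.

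For $Q_2$: the variables $\{V_j:j\in\calJ^\complement\}$ are independent, so $Q_2=\prod_{j\in\calJ^\complement}\E e^{t^*V_j}=\big(\E e^{-t^*\var}\big)^{m_1}\cdot(\text{SBM shift factor})^{m_1}$. Using the magic identity to write $\E e^{-t^*\var}$ as $e^{-\snr}$ (for Z2/CBM) or the appropriate SBM analogue that folds in the $e^{t^*\tune}$ factor to yield $e^{-2\snr\cdot(\text{per-entry rate})}$ with the net effect $m_1\cdot(\text{rate})=m\proxnum\snr$ up to lower-order terms, I get $Q_2\le \exp[-(\text{rate})\cdot m_1]$. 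Then I would show $(\text{rate})\cdot m_1 \ge (1-\eta)m\proxnum\snr$: since $m_1 = m\num - m^2 + m = m\num(1 - (m-1)/\num)$ and $m\le\lfloor M\rfloor\le C\sqrt{\num/\snr}$, the relative deficit $(m-1)/\num \le C/\sqrt{\num\snr} = \eta$ (with $C$ chosen as in the hypothesis), which gives exactly the claimed bound after matching $m\cdot\num\cdot(\text{rate per index pair}) = m\proxnum\snr$ in each model. I would also need $\snr$ (or its exponent) to be small enough that the Taylor remainder in ``$\E e^{-t^*\var}=e^{-\snr}(1+o(1))$'' is absorbed into the $\eta$-slack; this is where $\num\snr\ge\consts$ is used.

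For $Q_3$: the point is that the duplicated pairs contribute $\E e^{2t^*V}$ where $V=-\var$, i.e.\ $\E e^{-2t^*\var}$, and the crucial observation is that $t^*$ is the \emph{minimizer} of $t\mapsto\E e^{-t\var}$, so one might worry $\E e^{-2t^*\var}$ is large; instead I would show that in fact $Q_3=1$ exactly, which forces a more delicate claim — namely that the relevant grouped sum contributes a factor that is identically $1$. I expect this is because, after the sign cancellations, each duplicated pair actually contributes a mean-zero-in-the-exponent term, or more precisely the lemma is asserting that the contribution of $\calJ$ is exactly trivial due to an algebraic identity specific to how $D_{ii}$ and $D_{jj}$ share the entry $\adj^0_{ij}$. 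This is the step I expect to be the main obstacle: pinning down why $Q_3=1$ rather than merely $Q_3\le$ something, and verifying it uniformly across the three models — it likely hinges on a cancellation where the $V_j=V_{j'}$ pair appears with opposite signs (one from $X_i$, one from $X_j$) so that their joint contribution to the exponent vanishes, making $Q_3=\E e^0=1$; alternatively, that these terms are absent from $L_{m,\calM}$ after the diagonal is removed. I would resolve this by writing $L_{m,\calM}$ explicitly as a sum over ordered pairs $(i,j)$ with $i\in\calM$, splitting into $j\in\calM$ versus $j\notin\calM$, and observing that for $i,j\in\calM$ the term $-\labelstar_i\adj^0_{ij}\labelstar_j$ and $-\labelstar_j\adj^0_{ji}\labelstar_i$ are equal (symmetry), so they combine into $2\times$ a single random variable — and then checking whether the relevant sign structure makes the doubled contribution cancel or whether it is simply negligible and bounded by $1$ via $t^*$ being the MGF minimizer (so $\E e^{-2t^*\var}\le \E e^{0\cdot\var}\cdot(\text{something})$; more carefully, one would show the exponent of these $\le m_2$ terms is nonpositive). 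Granting the component bounds, the lemma's two displayed inequalities follow immediately.
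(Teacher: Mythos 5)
Your $Q_2$ step is essentially the paper's argument: independence over $\calJ^{\complement}$, the per-entry identity $\E e^{t^{*}(-\var)}=e^{-\snr}$ (Z2), $=1-\snr\le e^{-\snr}$ (CBM), or its SBM analogue, together with $m_{1}=m\num-m^{2}+m\ge(1-\eta)m\num$ since $m\le M\le C\sqrt{\num/\snr}=\num\eta$ (or $m=1$). No Taylor slack is needed there: the identities are exact or already go in the favorable direction. The one SBM subtlety you wave at but do not resolve is that the numbers of $Z$-type and $Y$-type entries in $\calJ^{\complement}$ are not individually controlled, only their sum; the paper's Fact~\ref{fact:SBM_magic_identity} supplies the exact ratio identity $\big(\E e^{t^{*}Z}/\E e^{-t^{*}Y}\big)^{1/2}e^{-t^{*}\tune}=1$ precisely so that the unknown difference $m_{\outprob}-m_{\inprob}$ drops out and only the sum (hence the factor $\proxnum=\num/2$) survives.

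The genuine gap is $Q_{3}$, and neither mechanism you propose works. There is no sign cancellation: for $i,j\in\calM$ the two terms $-\labelstar_{i}\adj_{ij}^{0}\labelstar_{j}$ and $-\labelstar_{j}\adj_{ji}^{0}\labelstar_{i}$ are \emph{equal}, so the duplicated entries add and $Q_{3}=\big(\E e^{2t^{*}(-\var)}\big)^{m_{2}}$ over $m_{2}$ independent copies. Nor does the fact that $t^{*}$ minimizes $t\mapsto\E e^{-t\var}$ give $\E e^{-2t^{*}\var}\le1$: Jensen and ``negative exponent in expectation'' both point the wrong way, and in general an MGF evaluated at twice its minimizer can exceed $1$. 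What makes the lemma true is the second ``magic identity,'' checked by direct computation in each model (Facts~\ref{fact:Z2_magic_identity}, \ref{fact:CBM_magic_identity}, \ref{fact:SBM_magic_identity}): $\E e^{2t^{*}(-\var)}=1$ exactly — for Z2, $\exp[-2t^{*}+2\std^{2}(t^{*})^{2}]=1$ since $t^{*}=1/\std^{2}$; for CBM, $(1-\obsprob)+\inprob e^{-2t^{*}}+\outprob e^{2t^{*}}=(1-\obsprob)+\outprob+\inprob=1$; for SBM, $\E e^{2t^{*}Z}\E e^{-2t^{*}Y}=1$ together with the $\tune$ identity. Conceptually, $e^{-2t^{*}x}$ is the likelihood ratio between the laws of $-\var$ and $\var$, so its expectation under $\var$ is exactly $1$. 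Without this identity your treatment of $Q_{3}$ is unproven (even the weaker bound $Q_{3}\le1$, which would suffice downstream, is not established by your argument).
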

\noindent The proof of the lemma is model-dependent, and is given
in Sections~\ref{sec:proof_Z2_subset_sum_Qs}, \ref{sec:proof_CBM_subset_sum_Qs}
and~\ref{sec:proof_SBM_subset_sum_Qs} for Models \ref{mdl:Z2},
\ref{mdl:CBM} and~\ref{mdl:SBM}, respectively. Combining the above
bounds for $Q_{1},Q_{2}$ and $Q_{3}$, we obtain 
\begin{align*}
P_{m,\calM} & \le\exp\left[-(1+\eta)m\log\left(\frac{\num e}{m}\right)+(1-2\eta)m\proxnum\snr\right]\cdot\exp\left[-(1-\eta)m\proxnum\snr\right]\cdot1\\
 & =\exp\left[-(1+\eta)m\log\left(\frac{\num e}{m}\right)-\eta m\proxnum\snr\right].
\end{align*}

\subsubsection{Controlling $P_{m}$ and $P$}

Using the above bound on $P_{m,\calM}$ and applying the union bound,
we have
\begin{align*}
P_{m}\le\sum_{\calM\subset[\num]:\left|\calM\right|=m}P_{m,\calM} & \le\binom{\num}{m}\exp\left[-(1+\eta)m\log\left(\frac{\num e}{m}\right)-\eta m\proxnum\snr\right]\\
 & \overset{(a)}{\le}\exp\left[-\eta m\log\left(\frac{\num e}{m}\right)-\eta m\proxnum\snr\right]\\
 & \overset{(b)}{\le}\left\{ \exp\left[-C\sqrt{\frac{1}{\num\snr}}\log\left(\frac{\num e}{m}\right)-\frac{C}{2}\sqrt{\num\snr}\right]\right\} ^{m},
\end{align*}
where step $(a)$ holds due to $\binom{\num}{m}\le\left(\frac{e\num}{m}\right)^{m}$,
and step $(b)$ holds due to the definition that $\eta:=C\sqrt{\frac{1}{\num\snr}}$
and the fact that $\proxnum\ge\frac{\num}{2}$. We proceed by considering
two cases: ($i$) If $m\le\sqrt{\num}$, then 
\[
C\sqrt{\frac{1}{\num\snr}}\log\left(\frac{\num e}{m}\right)+\frac{C}{2}\sqrt{\num\snr}\ge\frac{C}{2}\sqrt{2\log\left(\frac{\num e}{m}\right)}\ge\sqrt{\log\num},
\]
where the last step holds since $C\ge2$. We hence have $P_{m}\le\left[\exp\left(-\sqrt{\log\num}\right)\right]^{m}<\frac{1}{2}$.
($ii$) If $m>\sqrt{\num}$, then 
\[
C\sqrt{\frac{1}{\num\snr}}\log\left(\frac{\num e}{m}\right)+\frac{C}{2}\sqrt{\num\snr}\ge\frac{C}{2}\sqrt{\num\snr}\ge\log10
\]
under the assumption in Lemma~\ref{lem:subset_sum_ineq} that $C\ge2\sqrt{2}$
and $\num\snr\ge\consts\ge4$. We hence have $P_{m}\le\left[\exp\left(-\log10\right)\right]^{m}=\frac{1}{10^{m}}.$
Combining the two cases and applying union bound, we conclude that
\begin{align*}
P & \le\sum_{1\le m\le\sqrt{\num}}P_{m}+\sum_{\sqrt{\num}<m\le\num}P_{m}\\
 & \le\sum_{1\le m<\infty}\left[\exp\left(-\sqrt{\log\num}\right)\right]^{m}+\num\cdot\frac{1}{10^{\sqrt{\num}}}\\
 & \le\frac{\exp\left(-\sqrt{\log\num}\right)}{1-\exp\left(-\sqrt{\log\num}\right)}+\frac{1}{\num}\\
 & \le2\exp\left(-\sqrt{\log\num}\right)+\exp\left(-\log\num\right)\le3\exp\left(-\sqrt{\log\num}\right)
\end{align*}
as desired. This completes the proof of Lemma \ref{lem:subset_sum_ineq}.

\subsubsection{Proof of Lemma \ref{lem:subset_sum_Qs} for Model \ref{mdl:Z2} (Z2)
\label{sec:proof_Z2_subset_sum_Qs}}

Recall the random variable $\var\sim N(1,\std^{2})$ defined in Section~\ref{sec:preliminary}.
We need the following fact.
\begin{fact}
\label{fact:Z2_magic_identity}Under Model \ref{mdl:Z2}, we have
the following identities 
\begin{align*}
\E e^{t^{*}(-\var)} & =e^{-\snr}\qquad\text{and}\qquad\E e^{2t^{*}(-\var)}=1.
\end{align*}
\end{fact}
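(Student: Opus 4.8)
The plan is to verify both identities by a one-line computation with the Gaussian moment generating function. Recall that for $X\sim N(\mu,\sigma^{2})$ one has $\E e^{sX}=\exp\!\big(s\mu+\tfrac{1}{2}s^{2}\sigma^{2}\big)$ for every $s\in\real$. Under Model~\ref{mdl:Z2} we have $\var\sim N(1,\std^{2})$, so taking $s=-t$ gives
\[
\E e^{-t\var}\;=\;\exp\!\Big(-t+\tfrac{1}{2}t^{2}\std^{2}\Big),\qquad\forall\,t\in\real .
\]

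First I would obtain the first identity by substituting $t=t^{*}=\std^{-2}$ from Equation~(\ref{eq:def_tstar}):
\[
\E e^{-t^{*}\var}
=\exp\!\Big(-\tfrac{1}{\std^{2}}+\tfrac{1}{2}\cdot\tfrac{1}{\std^{4}}\cdot\std^{2}\Big)
=\exp\!\Big(-\tfrac{1}{2\std^{2}}\Big)
=e^{-\snr},
\]
where the last equality is the definition of $\snr$ for Model~\ref{mdl:Z2} in Equation~(\ref{eq:snr}). For the second identity, I would note that the exponent $t\mapsto -t+\tfrac{1}{2}t^{2}\std^{2}$ is a parabola with vertex at $t=\std^{-2}=t^{*}$ and value $0$ at $t=0$; by symmetry about the vertex it also takes the value $0$ at $t=2t^{*}$, hence $\E e^{-2t^{*}\var}=e^{0}=1$. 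Alternatively, substitute $t=2t^{*}=2\std^{-2}$ directly and observe that $-\tfrac{2}{\std^{2}}+\tfrac{1}{2}\cdot\tfrac{4}{\std^{4}}\cdot\std^{2}=0$.

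Since this is an elementary calculation, there is no real obstacle; the only point requiring care is plugging in the correct expressions for $t^{*}$ and $\snr$ from their definitions in Section~\ref{sec:preliminary} and Equation~(\ref{eq:snr}). Conceptually the two identities say, respectively, that $t^{*}$ is the Chernoff-optimal exponent for the lower tail of $\var$ (with optimal value $e^{-\snr}$), and that $2t^{*}$ lies on the far side of the vertex where the moment generating function returns to its value at $0$; these are precisely the facts that later drive the bounds $Q_{2}\le e^{-(1-\eta)m\proxnum\snr}$ and $Q_{3}=1$ in the proof of Lemma~\ref{lem:subset_sum_Qs}.
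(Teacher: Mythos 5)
Your computation is correct and is essentially identical to the paper's proof, which likewise evaluates the Gaussian moment generating function $\E e^{-t\var}=\exp(-t+\tfrac{1}{2}t^{2}\std^{2})$ at $t=t^{*}=\std^{-2}$ and $t=2t^{*}$ using the definitions of $t^{*}$ and $\snr$. The added remark about the symmetry of the exponent around its vertex is a nice observation but does not change the argument.
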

\begin{proof}
Recall the definitions $\snr:=(2\std^{2})^{-1}$ and $t^{*}:=\frac{1}{\std^{2}}$
in Equations~(\ref{eq:snr}) and~(\ref{eq:def_tstar}). The results
follow from direct calculation:
\[
\E e^{t^{*}(-\var)}=\exp\left[-t^{*}+\frac{1}{2}\std^{2}\left(t^{*}\right)^{2}\right]=e^{-\snr},\qquad\text{and}\qquad\E e^{2t^{*}(-\var)}=\exp\left[-2t^{*}+2\std^{2}\left(t^{*}\right)^{2}\right]=1.
\]
\end{proof}
To proceed, note that each of $\{V_{j}:j\in[m\num]\}$ is distributed
as $-\var$.

\paragraph*{Controlling $Q_{2}$.}

We have 
\[
Q_{2}:=\E\exp\left[t^{*}\sum_{j\in\calJ^{\complement}}V_{j}\right]\overset{(a)}{=}\exp\left[-m_{1}\snr\right]=\exp\left[-(m\num-m^{2}+m)\snr\right]
\]
where step $(a)$ follows from Fact \ref{fact:Z2_magic_identity}.
If $1\ge C\sqrt{\frac{\num}{\snr}}$, then we must have $m=\left\lfloor M\right\rfloor =1$
and 
\[
Q_{2}=\exp\left[-m\num\snr\right]\le\exp\left[-(1-\eta)m\num\snr\right].
\]
If $1\le C\sqrt{\frac{\num}{\snr}}$, then we have $1\le M\le C\sqrt{\frac{\num}{\snr}}$
and 

\[
Q_{2}\le\exp\left[-(m\num-m^{2})\snr\right]\le\exp\left[-(1-\eta)m\num\snr\right]
\]
where the last step holds since $m\le M\le C\sqrt{\frac{\num}{\snr}}=\num\eta$.
Either way, we have the desired inequality. 

\paragraph*{Controlling $Q_{3}$.}

Fact \ref{fact:Z2_magic_identity} directly implies the desired equality:
\[
Q_{3}:=\E\exp\left[t^{*}\sum_{j\in\calJ}V_{j}\right]=\left(\E e^{2t^{*}V_{1}}\right)^{m_{2}}=1.
\]

\subsubsection{Proof of Lemma \ref{lem:subset_sum_Qs} for Model \ref{mdl:CBM}
(CBM) \label{sec:proof_CBM_subset_sum_Qs}}

Recall the definition of the random variable $\var$ in Section~\ref{sec:preliminary}.
We need the following fact, whose proof is deferred to the end of
this section.
\begin{fact}
\label{fact:CBM_magic_identity}Under Model \ref{mdl:CBM}, we have
the following identities 
\[
\E e^{t^{*}(-\var)}=1-\snr\qquad\text{and}\qquad\E e^{2t^{*}(-\var)}=1.
\]
\end{fact}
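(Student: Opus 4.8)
The plan is to verify both identities by directly computing the moment generating function of $-\var$ at the points $t^{*}$ and $2t^{*}$, using the explicit probability mass function of $\var$ recorded in Section~\ref{sec:preliminary} together with the closed form of $t^{*}$ from Equation~(\ref{eq:def_tstar}). This is a pure calculation, so the ``proof'' is really just a careful bookkeeping of the substitution.

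First I would expand, using the pmf of $\var$ under Model~\ref{mdl:CBM} and the shorthands $\inprob := \obsprob(1-\flipprob)$ and $\outprob := \obsprob\flipprob$,
\[
\E e^{-t\var} = (1-\obsprob) + \inprob\, e^{-t} + \outprob\, e^{t},
\]
and record the identity $\obsprob = \inprob + \outprob$. Since $t^{*} = \frac{1}{2}\log\frac{1-\flipprob}{\flipprob} = \frac{1}{2}\log\frac{\inprob}{\outprob}$, we have $e^{t^{*}} = \sqrt{\inprob/\outprob}$ and $e^{-t^{*}} = \sqrt{\outprob/\inprob}$, so that $\inprob e^{-t^{*}} = \outprob e^{t^{*}} = \sqrt{\inprob\outprob}$. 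Substituting into the display yields $\E e^{-t^{*}\var} = 1 - (\inprob+\outprob) + 2\sqrt{\inprob\outprob} = 1 - (\sqrt{\inprob} - \sqrt{\outprob})^{2}$, which is exactly $1 - \snr$ by the definition of $\snr$ for CBM in Equation~(\ref{eq:snr}). For the second identity, note $e^{2t^{*}} = \inprob/\outprob$ and $e^{-2t^{*}} = \outprob/\inprob$, hence $\inprob e^{-2t^{*}} = \outprob$ and $\outprob e^{2t^{*}} = \inprob$, and therefore $\E e^{-2t^{*}\var} = (1-\obsprob) + \outprob + \inprob = 1 - \obsprob + \obsprob = 1$.

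I do not expect any genuine obstacle: the claim reduces to a one-line algebraic substitution once the pmf is written out and the value of $t^{*}$ is inserted. The only mildly delicate point is keeping the $\inprob,\outprob$ shorthands consistent with $\obsprob,\flipprob$ (in particular the relations $\obsprob = \inprob+\outprob$ and $\inprob/\outprob = (1-\flipprob)/\flipprob$); note that we never need to invoke the fact that $t^{*}$ is the minimizer of $t \mapsto \E e^{-t\var}$, since both equalities hold at that specific value by direct computation.
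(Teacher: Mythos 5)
Your computation is correct and matches the paper's own proof essentially verbatim: both expand $\E e^{-t\var}=(1-\obsprob)+\inprob e^{-t}+\outprob e^{t}$, substitute $e^{t^{*}}=\sqrt{\inprob/\outprob}$ to get $1-\obsprob+2\sqrt{\inprob\outprob}=1-\snr$, and substitute $e^{2t^{*}}=\inprob/\outprob$ to get $1-\obsprob+\outprob+\inprob=1$. No gaps.
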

To proceed, note that each of $\{V_{j}:j\in[m\num]\}$ is distributed
as $-\var$.

\paragraph*{Controlling $Q_{2}$.}

We have
\begin{align*}
Q_{2}:=\E\exp\left[t^{*}\sum_{j\in\calJ^{\complement}}V_{j}\right] & \overset{(a)}{=}(1-\snr)^{m_{1}}\\
 & \overset{(b)}{=}\exp\left[(m\num-m^{2}+m)\log(1-\snr)\right]\\
 & \overset{(c)}{\le}\exp\left[-(m\num-m^{2}+m)\snr\right]\\
 & \overset{(d)}{\le}\exp\left[-(1-\eta)m\num\snr\right],
\end{align*}
where step $(a)$ follows from Fact \ref{fact:CBM_magic_identity},
step $(b)$ follows from the fact that $m_{1}=m\num-m^{2}+m$, step
$(c)$ holds since $\log(1-x)\le-x,\forall x<1$, and step $(d)$
holds since $m\le M\le C\sqrt{\frac{\num}{\snr}}=\num\eta$ when $1\le C\sqrt{\frac{\num}{\snr}}$,
or $m=\left\lfloor M\right\rfloor =1$ when $1\ge C\sqrt{\frac{\num}{\snr}}$.
 We thus obtain the desired bound on $Q_{2}$. 

\paragraph*{Controlling $Q_{3}$.}

Fact \ref{fact:CBM_magic_identity} directly implies the desired equality:
\[
Q_{3}:=\E\exp\left[t^{*}\sum_{j\in\calJ}V_{j}\right]=\left(\E e^{2t^{*}(-\var)}\right)^{m_{2}}=1.
\]

\begin{proof}[Proof of Fact~\ref{fact:CBM_magic_identity}]
 Recall the shorthands $\inprob\coloneqq\obsprob(1-\flipprob)$ and
$\outprob\coloneqq\obsprob\flipprob$ introduced for Model~\ref{mdl:CBM};
note that $\obsprob=\inprob+\outprob$. Also recall the definitions
$\snr:=(\sqrt{\obsprob(1-\flipprob)}-\sqrt{\obsprob\flipprob})^{2}=(\sqrt{\inprob}-\sqrt{\outprob})^{2}$
and $t^{*}:=\frac{1}{2}\log\frac{1-\flipprob}{\flipprob}$ in Equations~(\ref{eq:snr})
and (\ref{eq:def_tstar}). The results follow from direct calculation:
\begin{align*}
\E e^{t^{*}(-\var)} & =(1-\obsprob)+\inprob e^{-t^{*}}+\outprob e^{t^{*}}\\
 & =(1-\obsprob)+2\sqrt{\inprob\outprob}=1-\snr,
\end{align*}
and 
\begin{align*}
\E e^{2t^{*}(-\var)} & =(1-\obsprob)+\inprob e^{-2t^{*}}+\outprob e^{2t^{*}}\\
 & =(1-\obsprob)+\outprob+\inprob=1.
\end{align*}
\end{proof}

\subsubsection{Proof of Lemma \ref{lem:subset_sum_Qs} for Model \ref{mdl:SBM}
(SBM) \label{sec:proof_SBM_subset_sum_Qs}}

We record the following fact, whose proof is deferred to the end of
this section.
\begin{fact}
\label{fact:SBM_magic_identity}Let $Z\sim\Bern(\outprob)$ and $Y\sim\Bern(\inprob)$.
We have the following identities 
\begin{align*}
\E e^{t^{*}Z}\E e^{-t^{*}Y} & =e^{-\snr},\\
\left(\E e^{t^{*}Z}\right)^{\frac{1}{2}}\left(\E e^{-t^{*}Y}\right)^{-\frac{1}{2}}e^{-t^{*}\tune} & =1,\\
\E e^{2t^{*}Z}\E e^{-2t^{*}Y} & =1,\\
\left(\E e^{2t^{*}Z}\right)^{\frac{1}{2}}\left(\E e^{-2t^{*}Y}\right)^{-\frac{1}{2}}e^{-2t^{*}\tune} & =1.
\end{align*}
\end{fact}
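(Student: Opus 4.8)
The plan is a direct computation: substitute the explicit formulas $t^{*}=\tfrac{1}{2}\log\tfrac{\inprob(1-\outprob)}{\outprob(1-\inprob)}$ and $\tune=\tfrac{1}{2t^{*}}\log\tfrac{1-\outprob}{1-\inprob}$ from Equations~(\ref{eq:def_tstar}) and~(\ref{eq:def_tune_param}), together with $\snr=-2\log\bigl[\sqrt{\inprob\outprob}+\sqrt{(1-\inprob)(1-\outprob)}\bigr]$ from~(\ref{eq:snr}), into the Bernoulli moment generating functions. First I would record $\E e^{sZ}=(1-\outprob)+\outprob e^{s}$ and $\E e^{-sY}=(1-\inprob)+\inprob e^{-s}$, and the elementary evaluations $e^{t^{*}}=\sqrt{\tfrac{\inprob(1-\outprob)}{\outprob(1-\inprob)}}$, $e^{2t^{*}}=\tfrac{\inprob(1-\outprob)}{\outprob(1-\inprob)}$, together with their reciprocals and $e^{-t^{*}\tune}=\sqrt{\tfrac{1-\inprob}{1-\outprob}}$, $e^{-2t^{*}\tune}=\tfrac{1-\inprob}{1-\outprob}$ (both immediate from $t^{*}\tune=\tfrac12\log\tfrac{1-\outprob}{1-\inprob}$).

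For the two identities involving $t^{*}$, the key algebraic step is the factorization
\[
\E e^{t^{*}Z}=(1-\outprob)+\sqrt{\tfrac{\inprob\outprob(1-\outprob)}{1-\inprob}}=\sqrt{\tfrac{1-\outprob}{1-\inprob}}\Bigl(\sqrt{(1-\outprob)(1-\inprob)}+\sqrt{\inprob\outprob}\Bigr)=\sqrt{\tfrac{1-\outprob}{1-\inprob}}\,e^{-\snr/2},
\]
and symmetrically $\E e^{-t^{*}Y}=\sqrt{\tfrac{1-\inprob}{1-\outprob}}\,e^{-\snr/2}$. Multiplying the two expressions gives the first identity $\E e^{t^{*}Z}\,\E e^{-t^{*}Y}=e^{-\snr}$; dividing them gives $\bigl(\E e^{t^{*}Z}\bigr)^{1/2}\bigl(\E e^{-t^{*}Y}\bigr)^{-1/2}=\sqrt{\tfrac{1-\outprob}{1-\inprob}}$, which cancels against $e^{-t^{*}\tune}=\sqrt{\tfrac{1-\inprob}{1-\outprob}}$ to yield the second identity.

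For the two identities involving $2t^{*}$, the computation is cleaner still, since the $e^{\pm 2t^{*}}$ terms telescope directly:
\[
\E e^{2t^{*}Z}=(1-\outprob)+\tfrac{\inprob(1-\outprob)}{1-\inprob}=\tfrac{1-\outprob}{1-\inprob},\qquad \E e^{-2t^{*}Y}=(1-\inprob)+\tfrac{\outprob(1-\inprob)}{1-\outprob}=\tfrac{1-\inprob}{1-\outprob}.
\]
Their product is $1$ (the third identity), and the square root of their ratio is $\tfrac{1-\outprob}{1-\inprob}$, which cancels against $e^{-2t^{*}\tune}=\tfrac{1-\inprob}{1-\outprob}$ to give the fourth identity.

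I do not expect a genuine obstacle: the statement is purely computational, and no probabilistic tail argument or limiting analysis is involved. The only point requiring a little care is choosing the grouping of terms in the $t^{*}$ case so that the factor $\sqrt{\inprob\outprob}+\sqrt{(1-\inprob)(1-\outprob)}=e^{-\snr/2}$ emerges cleanly; once that factorization is identified, all four identities follow by multiplying and dividing the two basic expressions and substituting the values of $e^{-t^{*}\tune}$ and $e^{-2t^{*}\tune}$.
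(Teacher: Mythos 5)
Your proposal is correct and follows essentially the same route as the paper: a direct computation substituting the explicit values of $t^{*}$, $\tune$ and $\snr$ into the Bernoulli moment generating functions. The only cosmetic difference is that you factor each MGF individually as $\sqrt{\tfrac{1-\outprob}{1-\inprob}}\,e^{-\snr/2}$ and $\sqrt{\tfrac{1-\inprob}{1-\outprob}}\,e^{-\snr/2}$, whereas the paper multiplies out the product and recognizes the perfect square $\bigl(\sqrt{\inprob\outprob}+\sqrt{(1-\inprob)(1-\outprob)}\bigr)^{2}$; both yield the same identities.
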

Let $\{Z_{j}\},\{Z_{j}'\}\overset{\text{i.i.d.}}{\sim}\Bern(\outprob)$
and independently $\{Y_{j}\},\{Y_{j}'\}\overset{\text{i.i.d.}}{\sim}\Bern(\inprob)$.
Note that each of $\{V_{j}:j\in[m\num]\}$ is distributed as either
$Z_{1}-\tune$ or $-Y_{1}+\tune$. We define the quantities 
\begin{align*}
m_{\inprob} & \coloneqq\left|\left\{ j\in\calJ^{\complement}:V_{j}\sim-Y_{1}+\tune\right\} \right|,\\
m_{\outprob} & \coloneqq\left|\left\{ j\in\calJ^{\complement}:V_{j}\sim Z_{1}-\tune\right\} \right|,\\
m_{\inprob}' & \coloneqq\frac{1}{2}\left|\left\{ j\in\calJ:V_{j}\sim-Y_{1}+\tune\right\} \right|,\\
m_{\outprob}' & \coloneqq\frac{1}{2}\left|\left\{ j\in\calJ:V_{j}\sim Z_{1}-\tune\right\} \right|.
\end{align*}
Note that $m_{\inprob}+m_{\outprob}=\left|\calJ^{\complement}\right|=m_{1}:=m\num-m^{2}+m$
and $m_{\inprob}'+m_{\outprob}'=\frac{1}{2}\left|\calJ\right|.$

\paragraph{Controlling $Q_{2}$.}

Expanding the definition of $Q_{2}$, we have 
\begin{align*}
Q_{2} & =\E\exp\left(t^{*}\sum_{j\in\left[m_{\outprob}\right]}(Z_{j}-\tune)-t^{*}\sum_{j\in\left[m_{\inprob}\right]}(Y_{j}-\tune)\right)\\
 & =e^{-t^{*}\tune(m_{\outprob}-m_{\inprob})}\left(\E e^{t^{*}Z_{1}}\right)^{m_{\outprob}}\left(\E e^{-t^{*}Y_{1}}\right)^{m_{\inprob}}\\
 & =\left(\E e^{t^{*}Z_{1}}\E e^{-t^{*}Y_{1}}\right)^{\frac{1}{2}m_{\inprob}+\frac{1}{2}m_{\outprob}}\left(\left(\frac{\E e^{t^{*}Z_{1}}}{\E e^{-t^{*}Y_{1}}}\right)^{\frac{1}{2}}e^{-t^{*}\tune}\right)^{m_{\outprob}-m_{\inprob}}.
\end{align*}
By Fact \ref{fact:SBM_magic_identity}, we can continue to write 
\begin{align*}
Q_{2} & \le\exp\left(-\left(\frac{1}{2}m_{\inprob}+\frac{1}{2}m_{\outprob}\right)\snr\right)\\
 & \le\exp\left(-\frac{1}{2}(m\num-m^{2}+m)\snr\right)\\
 & \le\exp\left(-(1-\eta)\frac{m\num}{2}\snr\right),
\end{align*}
where the last step holds since $m\le M\le C\sqrt{\frac{\num}{\snr}}=\num\eta$
when $1\le C\sqrt{\frac{\num}{\snr}}$, or $m=\left\lfloor M\right\rfloor =1$
when $1\ge C\sqrt{\frac{\num}{\snr}}$. We thus obtain the desired
bound on $Q_{2}$. 

\paragraph{Controlling $Q_{3}$.}

Similar to controlling $Q_{2}$, we compute 
\begin{align*}
Q_{3} & =\E\exp\left(2t^{*}\sum_{j\in\left[m_{\outprob}'\right]}(Z_{j}'-\tune)-2t^{*}\sum_{j\in\left[m_{\inprob}'\right]}(Y_{j}'-\tune)\right)\\
 & =e^{-2t^{*}\tune(m_{\outprob}'-m_{\inprob}')}\left(\E e^{2t^{*}Z_{1}'}\right)^{m_{\outprob}'}\left(\E e^{-2t^{*}Y_{1}'}\right)^{m_{\inprob}'}\\
 & =\left(\E e^{2t^{*}Z_{1}'}\E e^{-2t^{*}Y_{1}'}\right)^{\frac{1}{2}m_{\inprob}'+\frac{1}{2}m_{\outprob}'}\left(\left(\frac{\E e^{2t^{*}Z_{1}'}}{\E e^{-2t^{*}Y_{1}'}}\right)^{\frac{1}{2}}e^{-2t^{*}\tune}\right)^{m_{\outprob}'-m_{\inprob}'}=1,
\end{align*}
where the last step holds due to Fact \ref{fact:SBM_magic_identity}. 
\begin{proof}[Proof of Fact~\ref{fact:SBM_magic_identity}]
Under  Model~\ref{mdl:SBM}, recall the definitions $\snr:=-2\log\left[\sqrt{\inprob\outprob}+\sqrt{(1-\inprob)(1-\outprob)}\right]$,
$t^{*}:=\frac{1}{2}\log\frac{\inprob(1-\outprob)}{\outprob(1-\inprob)}$
and $\tune:=\frac{1}{2t^{*}}\log\frac{1-\outprob}{1-\inprob}$ in
Equations~(\ref{eq:snr}), (\ref{eq:def_tstar}) and~(\ref{eq:def_tune_param}),
respectively. For the first equation, we compute
\begin{align*}
\E e^{t^{*}Z}\E e^{-t^{*}Y} & =\left(\outprob e^{t^{*}}+1-\outprob\right)\left(\inprob e^{-t^{*}}+1-\inprob\right)\\
 & =\inprob\outprob+(1-\inprob)(1-\outprob)+\outprob(1-\inprob)e^{t^{*}}+\inprob(1-\outprob)\inprob e^{-t^{*}}\\
 & =\inprob\outprob+(1-\inprob)(1-\outprob)+2\sqrt{\inprob\outprob(1-\inprob)(1-\outprob)}\\
 & =\left(\sqrt{\inprob\outprob}+\sqrt{(1-\inprob)(1-\outprob)}\right)^{2}\\
 & =e^{-\snr}.
\end{align*}
For the second equation, noting that $e^{2t^{*}\tune}=\frac{1-\outprob}{1-\inprob}$,
we compute
\begin{align*}
\frac{\E e^{t^{*}Z}}{\E e^{-t^{*}Y}}\cdot e^{-2t^{*}\tune} & =\frac{\outprob e^{t^{*}}+1-\outprob}{\inprob e^{-t^{*}}+1-\inprob}\cdot\frac{1-\inprob}{1-\outprob}\\
 & =\frac{\outprob\sqrt{\frac{\inprob(1-\outprob)}{\outprob(1-\inprob)}}+1-\outprob}{\inprob\sqrt{\frac{\outprob(1-\inprob)}{\inprob(1-\outprob)}}+1-\inprob}\cdot\frac{1-\inprob}{1-\outprob}=1
\end{align*}
and then take the square root of both sides. Finally, the remaining
two equations follow from $e^{2t^{*}\tune}=\frac{1-\outprob}{1-\inprob}$
and the identities
\begin{align*}
\E e^{2t^{*}Z} & =\outprob e^{2t^{*}}+1-\outprob=\outprob\frac{\inprob(1-\outprob)}{\outprob(1-\inprob)}+1-\outprob=\frac{1-\outprob}{1-\inprob},\\
\E e^{-2t^{*}Y} & =\inprob e^{-2t^{*}}+1-\inprob=\inprob\frac{\outprob(1-\inprob)}{\inprob(1-\outprob)}+1-\inprob=\frac{1-\inprob}{1-\outprob}.
\end{align*}
\end{proof}

\section{Proof of the second inequality in Theorem \ref{thm:SDP_error}\label{sec:proof_cluster_error_rate}}

Fix any $\Yhat\in\betterset(\Adj)$. Note that $\Yhat,\Ystar\in[-1,1]^{\num\times\num}$
by feasibility to the program (\ref{eq:CBM_Z2_SDP}) or (\ref{eq:SBM_SDP}).
It follows that 
\[
\norm[\Yhat-\Ystar]F^{2}\le\max_{i,j\in[\num]}\left\{ \left|\yhat_{ij}-\ystar_{ij}\right|\right\} \cdot\sum_{i,j\in[\num]}\left|\yhat_{ij}-\ystar_{ij}\right|=2\norm[\Yhat-\Ystar]1.
\]
Combining with the first inequality of Theorem \ref{thm:SDP_error},
we obtain 
\begin{align*}
\norm[\Yhat-\Ystar]F^{2} & \le\num^{2}\cdot2\exp\left[-\left(1-\conste\sqrt{\frac{1}{\num\snr}}\right)\proxnum\snr\right]=:n^{2}\cdot\varepsilon.
\end{align*}
Let $\hat{\v}$ be an eigenvector of $\Yhat$ corresponding to the
largest eigenvalue with $\norm[\hat{\v}]2^{2}=\num$. It can be seen
that the largest eigenvalue of $\Ystar$ is $\num$ with $\LabelStar$
being the corresponding eigenvector, and  that all the other eigenvalues
are 0. Because $\Yhat=\Ystar+(\Yhat-\Ystar)$ and $\norm[\Yhat-\Ystar]F\le\sqrt{\varepsilon}\num$,
Davis-Kahan theorem (see, e.g., \cite[Corollary 3]{vu2011singular})
implies that 
\[
\min_{g\in\{\pm1\}}\norm[g\hat{\u}-\u^{*}]2=2\left|\sin\left(\frac{\theta}{2}\right)\right|\le C\sqrt{\varepsilon}
\]
for some absolute constant $C>0$, where $\hat{\u}$ and $\u^{*}$
denote the unit-norm eigenvectors associated with the largest eigenvalues
of $\Yhat$ and $\Ystar$, respectively, and $\theta\in[0,\frac{\pi}{2}]$
denotes the angle between these two vectors. By definition $\hat{\v}=\sqrt{\num}\hat{\u}$
and $\LabelStar=\sqrt{\num}\u^{*}$, we obtain that 
\[
\min_{g\in\{\pm1\}}\norm[g\hat{\v}-\LabelStar]2^{2}\le C^{2}\varepsilon\num.
\]
We proceed by relating $\misrate(\LabelSDP,\LabelStar)$ to $\min_{g\in\{\pm1\}}\norm[g\v-\LabelStar]2^{2}$.
Without loss of generality, assume that the minimum is attained by
$g=1$. Since $\labelsdp_{i}=\sign(\hat{v}_{i})$ by definition, we
have the bound 
\begin{align*}
C^{2}\varepsilon\num\ge\norm[\hat{\v}-\LabelStar]2^{2} & \ge\sum_{i\in[\num]}(\hat{v}_{i}-\labelstar_{i})^{2}\indic\{\sign(\hat{v}_{i})\ne\labelstar_{i}\}\\
 & \ge\sum_{i\in[\num]}\indic\{\sign(\hat{v}_{i})\ne\labelstar_{i}\}\\
 & \ge\num\cdot\misrate(\LabelSDP,\LabelStar).
\end{align*}
We divide both sides of the above equation by $\num$, and note that
the constant $2C^{2}$ can be absorbed into $\conste$ under the assumption
that $n\snr\ge\consts$ for $\consts$ sufficiently large. The result
follows.

\bibliographystyle{plain}
\bibliography{references}

\end{document}